\theoremstyle{definition}
\newtheorem{definition}{Definition}[section]
\newtheorem{example}[definition]{Example}
\newtheorem{assumption}[definition]{Assumption}
\newcounter{claim}[definition]
\newenvironment{claim*}[1][]{\refstepcounter{claim}\par\medskip
\textbf{Claim. #1} \rmfamily}{}
\newcommand*{\myproofname}{My proof}
\theoremstyle{plain}
\newtheorem{proposition}[definition]{Proposition}
\newtheorem{theorem}[definition]{Theorem}
\newtheorem{corollary}[definition]{Corollary}
\newtheorem{lemma}[definition]{Lemma}
\newtheorem{fact}[definition]{Fact}
\newtheorem{theoremnumb}{Theorem}
\newtheorem*{theorem*}{Theorem}
\theoremstyle{remark}
\newtheorem{remark}[definition]{Remark}
\newtheorem*{remark*}{Remark}
\newcommand{\id}{\textrm{\normalfont id}}
\newcommand{\diag}{\textrm{\normalfont diag}}
\newcommand{\up}{\uparrow \kern-3pt}
\newcommand{\N}{\mathbb{N}}
\newcommand{\Q}{\mathbb{Q}}
\newcommand{\R}{\mathbb{R}}
\newcommand{\C}{\mathbb{C}}
\newcommand{\RCF}{\text{RCF}}
\newcommand{\D}{\mathcal{D}}
\renewcommand{\L}{\mathcal{L}}
\newcommand{\Lring}{\mathcal{L}_\text{\normalfont ring}}
\newcommand{\UCd}{\textrm{\normalfont UC}_{\D}}
\newcommand{\DCF}{\D\textrm{\normalfont -CF}_0}
\newcommand{\Th}{\textrm{\normalfont Th}}
\DeclareMathOperator{\Spec}{Spec}
\DeclareMathOperator{\acl}{acl}
\DeclareMathOperator{\dcl}{dcl}
\DeclareMathOperator{\tp}{tp}
\DeclareMathOperator{\qftp}{qftp}
\DeclareMathOperator{\trdeg}{trdeg}
\renewcommand{\rangle}{\text{\reflectbox{$\langle$}}}
\newcommand{\forkindep}[1][]{%
  \mathrel{
    \mathop{
      \vcenter{
        \hbox{\oalign{\noalign{\kern-.3ex}\hfil$\vert$\hfil\cr
              \noalign{\kern-.7ex}
              $\smile$\cr\noalign{\kern-.3ex}}}
      }
    }\displaylimits_{#1}
  }
}
\newcommand{\algindep}[1][]{%
  \mathrel{
    \mathop{
      \vcenter{
        \hbox{\oalign{\noalign{\kern-.3ex}\hfil$\vert$\rlap{$^\text{alg}$}\hfil\cr
              \noalign{\kern-.7ex}
              $\smile$\cr\noalign{\kern-.3ex}}}
      }
    }\displaylimits_{#1}
  }
}
\newcommand{\Kindep}[1][]{%
  \mathrel{
    \mathop{
      \vcenter{
        \hbox{\oalign{\noalign{\kern-.3ex}\hfil$\vert$\rlap{$^K$}\hfil\cr
              \noalign{\kern-.7ex}
              $\smile$\cr\noalign{\kern-.3ex}}}
      }
    }\displaylimits_{#1}
  }
}
\newcommand{\Lalgindep}[1][]{%
  \mathrel{
    \mathop{
      \vcenter{
        \hbox{\oalign{\noalign{\kern-.3ex}\hfil$\vert$\rlap{$^\L$}\hfil\cr
              \noalign{\kern-.7ex}
              $\smile$\cr\noalign{\kern-.3ex}}}
      }
    }\displaylimits_{#1}
  }
}
\newcommand{\forkindepstar}[1][]{%
  \mathrel{
    \mathop{
      \vcenter{
        \hbox{\oalign{\noalign{\kern-.3ex}\hfil$\vert$\rlap{$^*$}\hfil\cr
              \noalign{\kern-.7ex}
              $\smile$\cr\noalign{\kern-.3ex}}}
      }
    }\displaylimits_{#1}
  }
}
\newcommand{\forkindepplus}[1][]{%
  \mathrel{
    \mathop{
      \vcenter{
        \hbox{\oalign{\noalign{\kern-.3ex}\hfil$\vert$\rlap{$^+$}\hfil\cr
              \noalign{\kern-.7ex}
              $\smile$\cr\noalign{\kern-.3ex}}}
      }
    }\displaylimits_{#1}
  }
}
\title[The uniform companion for large fields with free operators]{The uniform companion for fields with free operators in characteristic zero}
\author{Shezad Mohamed}
\address{Shezad Mohamed, Department of Mathematics, University of Manchester, Oxford Road, Manchester, United Kingdom M13 9PL}
\email{shezad.mohamed@manchester.ac.uk}
\urladdr{https://personalpages.manchester.ac.uk/staff/shezad.mohamed/}
\date{\today}
\subjclass[2020]{03C60, 12H99}
\keywords{large fields, fields with operators, differential fields, model companion}
\thanks{{\em Acknowledgements}: This research was supported by a University of Manchester Research Scholar Award.}
\begin{document}

\begin{abstract}
Generalising the uniform companion for large fields with a single derivation, we construct a theory $\UCd$ of fields of characteristic $0$ with free operators---operators determined by a homomorphism from the field to its tensor product with $\D$, a finite-dimensional $\Q$-algebra---which is the model companion of any theory of a field with free operators whose associated difference field is difference large and model complete. Under the assumption that $\D$ is a local ring, we show that simplicity is transferred from the theory of the underlying field to the theory of the field with operators, and we use this to study the model theory of bounded, PAC fields with free operators.
\end{abstract}

\maketitle

\section{Introduction}\label{sec-intro}

In \cite{tressl_uniform_2005}, Tressl showed that there is an inductive theory UC in the language of differential rings with a single derivation, $\Lring(\delta)$, such that whenever $T$ is a model complete $\Lring$-theory of large fields of characteristic $0$, $T \cup \text{UC}$ is the model companion of the $\Lring(\delta)$-theory of differential fields whose underlying field is a model of $T$. He called UC the uniform companion. This brought the theories $\text{DCF}_0$ and CODF into a common framework, and also showed that the theory of $p$-adically closed fields of fixed $p$-rank with a derivation and the theory of pseudofinite fields with a derivation both admit a model companion.

The results of this paper generalise the above result of Tressl from the case of a single derivation to that of so-called free operators. Fields with free operators, or $\D$-fields as we will now refer to them, were introduced by Moosa and Scanlon in \cite{moosa_scanlon_2010}. In \cite{moosa_scanlon_2013} they showed that $\D$-fields are an appropriate framework for unifying the model theory of various classes of algebraically closed fields with operators: derivations, endomorphisms, and others. We give a brief outline of the set-up; see Section~\ref{sec-preliminaries} for details. Fix a field $k$ of characteristic $0$ and a finite-dimensional $k$-algebra $\D$ with a $k$-algebra homomorphism $\pi \colon \D \to k$. Then a $\D$-field is a field $F$ which is also a $k$-algebra equipped with a $k$-algebra homomorphism $F \to F \otimes_k \D$ which is a section to $\id_F \otimes \pi$. Note that $\delta \colon F \to F$ is a $k$-linear derivation if and only if $x \mapsto x + \delta(x) \varepsilon$ is a $k$-algebra homomorphism $F \to F[\varepsilon]/(\varepsilon^2)$, and $\sigma \colon F \to F$ is a $k$-linear field endomorphism if and only if (somewhat trivially) $x \mapsto (x, \sigma(x))$ is a $k$-algebra homomorphism $F \to F \times F$. In the case $k=\Q$ we recover the usual notions of derivations and endomorphisms. The authors of \cite{moosa_scanlon_2013} then show that the theory of algebraically closed fields of characteristic $0$ with a $\D$-field structure admits a model companion, called $\D\text{-CF}_0$.

Under the mild assumption that $\D$ satisfies Assumption~\ref{res-field-k}, $\D$-fields have a sequence of $t \geq 0$ nontrivial, definable associated endomorphisms. In \cite{Kikyo2002TheSO}, Kikyo and Shelah proved that if $T$ is a model complete theory with the strict order property, $T_\sigma$ has no model companion. An immediate consequence is that, if $t > 0$, $\text{RCF} \cup \text{``$\D$-fields''}$ and $\Th(\Q_p) \cup \text{``$\D$-fields''}$ both have no model companion; we can only hope to find the model companion of a given $\D$-field if its associated difference field has a model companion.

Our aim, therefore, is to find a theory $\UCd$ in the language of $\D$-fields such that whenever $T$ is a model complete theory of difference large fields, $T \cup \UCd$ is the model companion of $T \cup \text{``$\D$-fields''}$. It will then follow that $\D\text{-CF}_0 = \text{ACFA}_{0,t} \cup \UCd$. Here a difference field is difference large if it satisfies a natural modification of the geometric axiom of $\text{ACFA}_{0,t}$; see Definition~\ref{def-difference-large} and Definition~3.2.2 of Cousins's thesis \cite{cousins-thesis} for where this notion first appears. In the case $t=0$ this specialises to the field-theoretic notion of largeness.

Thus, this paper aims to generalise previous results in two senses. In the first sense, we generalise Tressl's uniform companion from the differential case (in a single derivation) to the case of free operators. In the second, the result of Moosa--Scanlon that $\text{ACFA}_{0,t} \cup \text{``$\D$-fields''}$ admits a model companion from the case of $\text{ACFA}_{0,t}$ to that of an arbitrary model complete theory of difference large fields of characteristic $0$. To this end, we prove the following in Section~\ref{sec-uc}.

\begin{theoremnumb}\label{thm-one-intro}
Let $T$ be a model complete theory of difference large fields, and suppose it is the model companion of some $T_0$. Then 
\begin{enumerate}
    \item[\normalfont (i)] $T \cup \UCd$ is the model companion of $T_0 \cup \text{``$\D$-fields''}$;
    \item[\normalfont (ii)] if $T$ is the model completion of $T_0$, then $T \cup \UCd$ is the model completion of $T_0 \cup \text{``$\D$-fields''}$;
    \item[\normalfont (iii)] if $T$ has quantifier elimination, $T \cup \UCd$ has quantifier elimination.
\end{enumerate}
\end{theoremnumb}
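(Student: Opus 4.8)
The plan is to prove (i) directly by verifying the two defining conditions of a model companion: that $T \cup \UCd$ is model complete, and that every model of $T_0 \cup \text{``$\D$-fields''}$ embeds into a model of $T \cup \UCd$ (and conversely). The engine for both will be an \emph{amalgamation/embedding lemma}: given a $\D$-field $(F,\partial)$ whose underlying difference field is a model of $T$, and given a finitely generated $\D$-field extension $(E,\partial) \supseteq (F,\partial)$, one can embed $(E,\partial)$ over $(F,\partial)$ into some model of $T \cup \UCd$; equivalently, after passing to a monster model $\mathcal{U} \models T$ with its ambient $\D$-structure, the $\D$-field type of $E$ over $F$ is realised. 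The axioms of $\UCd$ will be exactly what is needed to make this run: they should say that $F$ is a $\D$-field, that the associated difference field is ``difference large'' enough, and a ``density'' scheme asserting that every consistent $\D$-variety condition over $F$ that has a solution in a $\D$-field extension already has one in $F$. This is the direct analogue of Tressl's axiomatisation of $\mathrm{UC}$, and the bulk of Section~\ref{sec-uc} (which I am assuming) should already furnish the precise form of these axioms and the key extension results for $\D$-polynomial rings.

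The main steps, in order: (1) Fix the axioms of $\UCd$ and check they are inductive ($\forall\exists$), so $T \cup \UCd$ is an elementary class whenever $T$ is. (2) Prove the embedding lemma. Starting from $(F,\partial) \models T \cup \UCd$ inside a monster and a finitely generated $\D$-extension $(E,\partial)$, write $E = F\langle a \rangle$ as a $\D$-field; prolong $a$ to its sequence of Hasse–Schmidt-type data, reduce to a system of algebraic/difference conditions over $F$, and use \emph{difference largeness} together with model completeness of $T$ to find a realisation in $\mathcal{U}$. Here the prolongation formalism of Moosa–Scanlon converts the $\D$-structure on $E$ into ordinary difference-algebraic data over the associated difference field of $F$, at which point the geometric difference-largeness axiom applies. (3) Conclude model completeness of $T \cup \UCd$ by the usual test: an embedding $(F,\partial) \subseteq (F',\partial)$ between models is elementary because, using the embedding lemma and a back-and-forth inside a common elementary extension, one shows $(F,\partial) \preceq (F',\partial)$; equivalently verify Robinson's criterion via amalgamation over a common submodel. (4) Show every model of $T_0 \cup \text{``$\D$-fields''}$ embeds into a model of $T \cup \UCd$: given $(K,\partial) \models T_0 \cup \text{``$\D$-fields''}$, first embed the difference reduct into a model of $T$ (possible since $T$ is the model companion of $T_0$), extend the $\D$-structure along this embedding using the free-operator prolongation machinery, and then iterate the embedding lemma in a chain to saturate the density axioms, taking the union. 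Since $\UCd$ is inductive the union is again a model, giving the required model of $T \cup \UCd$. Together with (2), this proves (i).

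For (ii), model completion is model companion plus amalgamation over substructures: given $(F,\partial) \models T_0 \cup \text{``$\D$-fields''}$ and two model-companion extensions, amalgamate. The amalgamation of the $\D$-structures reduces, via prolongations, to amalgamation of difference fields over the difference reduct of $(F,\partial)$, which is available because $T$ is the \emph{model completion} of $T_0$; then re-embed the amalgam into a model of $T \cup \UCd$ by (i). For (iii), quantifier elimination for $T \cup \UCd$: by the standard substructure-completeness criterion it suffices to show that $T \cup \UCd$ is model complete (already done) and that any two models containing a common $\LDring$-substructure can be amalgamated, which again reduces to QE for $T$ plus the prolongation translation; alternatively, use that a model-complete theory whose models amalgamate over arbitrary common substructures has QE. The main obstacle throughout is step (2), the embedding lemma: one must correctly encode a finitely generated $\D$-field extension as a difference-algebraic ``point on a variety'' so that the difference-largeness axiom of $T$ bites, and one must be careful that the $\D$-field structure obtained on the realisation in $\mathcal{U}$ is genuinely a section of $\id \otimes \pi$ (i.e.\ satisfies the integrability constraints coming from $\D$), not merely a difference-algebraic solution; handling these constraints is exactly where Assumption~\ref{res-field-k} and the structure theory of $\D$ from Section~\ref{sec-preliminaries} enter.
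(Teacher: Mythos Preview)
Your outline is broadly correct and follows the same strategy as the paper, but your packaging of the key lemma blurs a distinction the paper keeps sharp. The paper factors the work into two independent facts: (a) two models of $\UCd$ that share a common $\D$-subring and agree existentially as \emph{difference} fields already agree existentially as $\D$-fields (Theorem~\ref{same-ex-theory}); and (b) any $\D$-field whose associated difference field is difference large extends to a model of $\UCd$ by an extension that is \emph{elementary at the difference level} (Theorem~\ref{elext-large}). These combine into a completeness-over-substructures statement (Theorem~\ref{uc-first-theorem}), from which (i)--(iii) follow formally exactly as in your steps (3)--(4) and your sketches for (ii)--(iii).

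Your single ``embedding lemma'' runs (a) and (b) together, and step~(2) shows the cost: you invoke difference largeness and model completeness of $T$ to realise the $\D$-type of $E$ in the monster, but difference largeness only yields points of the form $(a,\sigma_1(a),\ldots,\sigma_t(a))$ on $\tau_{\mathcal{E}} X$, whereas a $\D$-realisation needs a point of the form $\nabla(a)$ on the full prolongation $\tau_\D X$---strictly more data when $\D$ is not reduced. It is the $\UCd$ axiom on the \emph{target} model that delivers such a $\nabla$-point (this is exactly the content of (a)); difference largeness is used only to \emph{build} models of $\UCd$ in the first place (this is (b)). Relatedly, your monster $\mathcal{U}$ must be a monster of $T\cup\UCd$, not merely of $T$ with some $\D$-structure grafted on, since the latter need not contain the required $\nabla$-points. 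Once you separate (a) and (b), the role of each hypothesis becomes transparent and the imprecision in step~(2) disappears.
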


When $\D$ is local, there are no associated endomorphisms, and we recover the $\D$-field analogue of Tressl's uniform companion for large differential fields. This yields the uniform companion in the following cases:
\begin{itemize}
    \item several (not necessarily commuting) derivations;
    \item truncated, non-iterative higher derivations;
    \item operators combining these two.
\end{itemize} 

The existence of the uniform companion will follow from two facts:
\begin{enumerate}
    \item[\normalfont (1)] Let $M,N \models \UCd$, and $A$ a common $\D$-subring of them. If $M$ and $N$ have the same existential theory over $A$ as difference fields, then they do as $\D$-fields.
    \item[\normalfont (2)] Every $\D$-field whose associated difference field is difference large can be extended to a model of $\UCd$, and this extension is elementary as an extension of difference fields.
\end{enumerate}
In \cite{tressl_uniform_2005}, Tressl establishes his results for differential fields via these two facts. Our proof of these will be more geometric and takes ideas from \cite{moosa_scanlon_2013}. Indeed, the axiom scheme $\UCd$ is very similar to the geometric axiomatisation of $\D\text{-CF}_0$ in \cite{moosa_scanlon_2013}.

\begin{remark*}
In \cite{tressl_uniform_2005}, Tressl constructs his uniform companion in the setting of several \emph{commuting} derivations $\delta_1, \ldots, \delta_m$. As pointed out above, here we obtain, as an instance of the general result of this paper, the uniform companion in the case of several not necessarily commuting derivations. Thus, in the context of derivations, the results of this paper and those of Tressl differ, agreeing only in the case of a \emph{single} derivation. However, the case of not necessarily commuting derivations does appear in a recent paper of Fornasiero and Terzo \cite{fornasiero-terzo} where they consider generic derivations on algebraically bounded structures---a wider context than the large and model complete fields considered here; see Remark~\ref{slim-alg-bounded-remark} for more details.
\end{remark*}

In Section~\ref{sec-alt-characterisations} we prove alternative characterisations of $\UCd$ in the case that $\D$ is local. One is still geometric in flavour, and one is similar to the notion of differential largeness from \cite{leon_sanchez_differentially_2020}. From these alternative characterisations, we will show that algebraic extensions of large fields which are models of $\UCd$ are again models of $\UCd$ with the unique induced $\D$-structure on the algebraic extension. Hence the algebraic closure of a large field which is a model of $\UCd$ is a model of $\D\text{-CF}_0$ from \cite{moosa_scanlon_2013}. This gives a class of $\D$-fields with minimal $\D$-closures---though it is as yet unknown if $\D$-closures exist in general. Here by a $\D$-closure we mean a prime model extension in the theory $\D\text{-CF}_0$.

The fact that NIP transfers from $T$ to $T \cup \UCd$ is an immediate consequence of the transfer of quantifier elimination. In Section~\ref{sec-transfer} we show the following. 

\begin{theoremnumb}\label{thm-two-intro}
Suppose $\D$ is local and $T$ is a model complete theory of large fields. If $T$ is simple, then $T \cup \UCd$ is simple.
\end{theoremnumb}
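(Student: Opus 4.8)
The plan is to establish simplicity of $T \cup \UCd$ by exhibiting an independence relation on models of $T \cup \UCd$ satisfying the Kim--Pillay axioms, and to build this relation directly from the corresponding notion in $T$. Since $\D$ is local, there are no associated endomorphisms, so a $\D$-field is really a field with a tuple of operators built from a single $\D$-structure map $F \to F \otimes_k \D$, and the relevant ``difference'' reduct is just the field itself. Thus I would define, for $A, B, C$ small $\D$-subsets of a monster model $\mathcal{U} \models T \cup \UCd$, the relation
\[
A \forkindep[C] B \quad :\Longleftrightarrow \quad \langle A \rangle_{\D} \;\underset{\langle C \rangle_{\D}}{\Lalgindep}\; \langle B \rangle_{\D},
\]
where $\langle \cdot \rangle_{\D}$ denotes generated $\D$-subfield and $\Lalgindep$ is the independence relation witnessing simplicity of $T$ (applied to the underlying field reduct of these $\D$-subfields). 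The point is that by part (iii) of Theorem~1, $T \cup \UCd$ has quantifier elimination down to the field language together with the operators, so types over $\D$-subfields are controlled by the field-type of the $\D$-subfield generated; this is exactly the content of fact (1) in the introduction, suitably read through QE.

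The key steps, in order, would be: (a) record that $\UCd$ has QE relative to $T$ and that consequently $\tp(A/C)$ in $\mathcal{U}$ is determined by the $T$-type of $\langle A\rangle_{\D}$ over $\langle C\rangle_{\D}$ as fields together with the $\D$-structure data, which is itself field-definable; (b) verify invariance, monotonicity, base monotonicity, transitivity, normality and finite character for $\forkindep$ — these are immediate since they hold for $\Lalgindep$ in $T$ and $\langle\cdot\rangle_{\D}$ is a finitary closure operator compatible with field operations; (c) verify symmetry, again inherited from $T$; (d) prove \emph{existence/extension}: given $A$, $B$, $C$, find $A' \equiv_C A$ with $A' \forkindep[C] B$ — here one uses extension in $T$ to move $\langle A\rangle_{\D}$ to be $\Lalgindep$ over $\langle C\rangle_{\D}$ from $\langle B\rangle_{\D}$ as fields, then transports the $\D$-structure along the field isomorphism (this is where largeness and being a model of $\UCd$ enter: one must check the resulting $\D$-field embeds into $\mathcal{U}$ over $B$, which is precisely the amalgamation/embedding content underlying fact (2)); (e) prove the \emph{independence theorem} (3-amalgamation over a model): given $M \preceq \mathcal{U}$, tuples $a_1 \equiv_M a_2$, and $b, c$ with $b \forkindep[M] c$, $a_1 \forkindep[M] b$, $a_2 \forkindep[M] c$, produce $a$ with $a \forkindep[M] bc$ realizing the right types — this reduces, via QE, to the independence theorem for $T$ applied to the generated $\D$-subfields, plus checking that the amalgamated field carries a coherent $\D$-structure extending the given ones and embeds into $\mathcal{U}$.

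I expect step (e), the independence theorem, to be the main obstacle, together with the $\D$-structure-transport half of step (d). The field-theoretic 3-amalgamation is handed to us by simplicity of $T$, but one must verify that when three $\D$-fields are amalgamated over a common $\D$-subfield in a field-independent way, the tensor-product $\D$-structures agree on overlaps and assemble into a single $\D$-structure on the amalgam — this is a compatibility computation with the maps $F \to F \otimes_k \D$, and it should go through because $\D$ is local (so no endomorphism obstructions) and because $\otimes_k \D$ is exact, but it needs care. Once the amalgamated $\D$-field is built, one invokes fact (2) (every $\D$-field with large enough underlying field embeds into a model of $\UCd$ elementarily over the base as fields) to place it inside $\mathcal{U}$ and extract the realizing type. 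Having checked all Kim--Pillay axioms, the canonical independence relation of $T \cup \UCd$ coincides with $\forkindep$ and $T \cup \UCd$ is simple; I would also remark that this identification shows forking independence in $T \cup \UCd$ is, on $\D$-subfields, just field-forking-independence of the generated $\D$-subfields, which is the natural analogue of the differential case.
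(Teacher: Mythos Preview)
Your overall strategy matches the paper's: define an independence relation on the $\D$-monster from nonforking in $T$ applied to a closure of the parameter sets, and verify the Kim--Pillay axioms, with the serious work sitting in full existence and the independence theorem. But there is a genuine gap in the execution, located exactly where you suspected (steps (d) and (e)), and it is not the compatibility computation you describe.

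The issue is how you pass from $T$-independence to the ability to amalgamate $\D$-structures. Amalgamating two $\D$-field structures over a common $\D$-subfield $F$ requires that the two extensions be \emph{linearly disjoint} over $F$ (this is Lemma~5.1 of Moosa--Scanlon, recorded in the paper as Fact~\ref{amalgamate-D-structures}); exactness of $-\otimes_k \D$ is not the relevant mechanism. Forking independence in an arbitrary simple theory of fields does \emph{not} directly yield linear disjointness, and certainly not over a base that is merely the generated $\D$-subfield $\langle C\rangle_{\D}$. Two things are missing from your proposal. First, one needs that in $T$ nonforking independence implies field-theoretic algebraic independence; the paper obtains this by proving that a model complete theory of large fields is very $\L$-slim (an adaptation of Junker--Koenigsmann), so that $\Lalgindep$ is an honest independence relation and hence is implied by nonforking. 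Second, algebraic independence upgrades to linear disjointness only when the base extension is regular, so the closure operator must be $\acl$ (shown in Lemma~\ref{structure-of-acl} to equal the relative field-algebraic closure of $\langle\,\cdot\,\rangle_{\L(\partial)}$), not the bare generated $\D$-subfield. The paper's independence relation is accordingly $A\forkindepplus[D]B \iff \acl(A)\forkindep[\acl(D)]\acl(B)$, and the remark following Theorem~\ref{simplicity-transfers} explicitly flags that slimness is what makes the amalgamation go through over $\acl$-closed, non-model bases. Without these two ingredients your steps (d) and (e) do not go through as written.
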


This is less immediate and requires the notion of slimness from \cite{junker-koenigsmann} to get a handle on what nonforking independence looks like in the underlying theory of fields. The authors of \cite{junker-koenigsmann} show that model complete, large fields are very slim, and hence that, in such a field, algebraic independence is an independence relation. From the proof of the Kim--Pillay theorem, we then get that if two $\D$-fields are independent in the sense of nonforking, they are algebraically independent as fields. This fact will allow us to amalgamate independent (in the sense of nonforking) $\D$-fields, and thus prove that if the independence theorem holds in $T$, it must also hold in $T \cup \UCd$.

In Section~\ref{sec-pseudo-D-closed} we study the PAC substructures in $\D\text{-CF}_0$ using the definition from \cite{hoffman-dynamics} of being existentially closed in every $\Lring(\partial)$-regular extension (that is, an extension of $\D$-fields $A \leq B$ where $\acl(A) \cap \dcl(B) = \dcl(A)$), and we show that they are characterised as those $\D$-fields which are models of $\UCd$ and PAC as fields. We use this in conjunction with Section~\ref{sec-transfer} to prove simplicity and elimination of imaginaries for the theory of a bounded $\D$-field which is a PAC substructure in $\DCF$, extending the corresponding differential results from Section~\ref{sec-pseudo-D-closed} of \cite{hoffman-leon-sanchez} to the case of $\D$-fields.

In Section~\ref{sec-non-local} we examine what happens without Assumption~\ref{res-field-k}. In this case, $\D$-fields do not necessarily have associated endomorphisms, and it does not make sense to ask whether $T \cup \text{``$\D$-fields''}$ has a model companion when $T$ is a theory of difference fields. So instead we ask whether it has a model companion when $T$ is a theory of fields. We (partially) answer by showing that, when the base field $k$ is finitely generated over $\Q$ and $\D$ is not local, there is some prime $p$ such that $\Th(\Q_p) \cup \text{``$\D$-fields''}$ has no model companion. Thus, when $k$ is finitely generated over $\Q$, the uniform companion \emph{for model complete, large fields} exists if and only if $\D$ is local.

The author would like to thank Omar Le\'{o}n S\'{a}nchez for his very helpful feedback on the results of this paper and his comments on its several drafts.

\textbf{Assumptions. }All rings are commutative and unital. Ring homomorphisms preserve the unit. All fields are of characteristic $0$.

\section{Preliminaries}\label{sec-preliminaries}

In this section we review the basic definitions of $\D$-fields and large fields and the model-theoretic set-up we will be working in. See \cite{moosa_scanlon_2013} and \cite{ozlem_2018} for more details.

\subsection{$\D$-fields}\label{subsec-D-fields}
Fix a base field $k$ of characteristic $0$. Let $\D$ be a finite-dimensional $k$-algebra, and let $\varepsilon_0, \ldots, \varepsilon_l$ be a $k$-basis of $\D$. Let $\pi \colon \D \to k$ be a $k$-algebra homomorphism that sends $\varepsilon_0 \mapsto 1$ and $\varepsilon_i \mapsto 0$ for $i=1, \ldots, l$. If $R$ is a $k$-algebra, $1 \otimes \varepsilon_0, \ldots, 1 \otimes \varepsilon_l$ is an $R$-basis of $R \otimes_k \D$. Write $\pi^R \colon R \otimes_k \D \to R$ for the $k$-algebra homomorphism $\id_R \otimes \pi$.

\begin{definition}
Let $R$ be a $k$-algebra and $\partial_i \colon R \to R$ a sequence of unary functions on $R$ for $i=1, \ldots, l$. We say that $(R,\partial_1, \ldots, \partial_l)$ is a $\D$-ring if the map $\partial \colon R \to R \otimes_k \D$ given by
\[
r \mapsto r \otimes \varepsilon_0 + \partial_1(r) \otimes \varepsilon_1 + \ldots + \partial_l(r) \otimes \varepsilon_l
\]
is a $k$-algebra homomorphism. Equivalently, we will say that $(R, \partial)$ is a $\D$-ring if $\partial \colon R \to R \otimes_k \D$ is a $k$-algebra homomorphism such that $\pi^R \circ \partial = \id_R$.

If $R$ is a $k$-algebra and $S$ is an $R$-algebra given by $a \colon R \to S$, we say that $\partial \colon R \to S \otimes_k \D$ is a $\D$-operator along $a \colon R \to S$ if it is a $k$-algebra homomorphism and $\pi^S \circ \partial = a$. Then $(R, \partial)$ is a $\D$-ring if and only if $\partial$ is a $\D$-operator along $\id_R$.
\end{definition}

The ring structure of $\D$ determines the additive and multiplicative rules of the functions $\partial_i$. Indeed, let $a_{ijk}, b_i \in k$ be the elements defined by $\varepsilon_i \varepsilon_j = \sum_{k=0}^l a_{ijk} \varepsilon_k$ and $1_\D = \sum_{i=0}^l b_i \varepsilon_i$. Then $k$-linearity of $\partial$ corresponds to $k$-linearity of each $\partial_i$. Multiplicativity of $\partial$ corresponds to the following ``product rule'' being satisfied for each $k$: $\partial_k(rs) = \sum_{i,j=0}^l a_{ijk} \partial_i(r) \partial_j(s)$. That $\partial$ preserves the unit corresponds to the equation $\partial_i(1_R) = b_i$.

Note that being a $\D$-ring imposes no additional relations between the functions $\partial_i$. For example, commutativity of the operators is not imposed by being a $\D$-ring (though a particular $\D$-ring may indeed have $\partial_i \partial_j = \partial_j \partial_i$).

We can axiomatise the theory of $\D$-rings in the language
\[
\Lring(\partial) = \{+,-,\cdot, 0, 1, (c_a)_{a \in k}, \partial_1, \ldots, \partial_l \},
\]
where $c_a$ is a constant symbol for the element $a \in k$. 

\begin{example}\label{D-ring-examples}
\begin{enumerate}
    \item Take $\D$ to be the algebra of dual numbers, $k[\varepsilon] / (\varepsilon^2)$, with the standard $k$-algebra structure and basis $1, \varepsilon$. Then $(R, \partial_1)$ is a $\D$-ring precisely when $R$ is a $k$-algebra and $\partial_1$ is a $k$-linear derivation of $R$.
    \item Let $\D = k[\varepsilon_1, \ldots, \varepsilon_l]/(\varepsilon_1, \ldots, \varepsilon_l)^2$ with basis $1, \varepsilon_1, \ldots, \varepsilon_l$. Then $(R, \partial_1, \ldots, \partial_l)$ is a $\D$-ring if $R$ is a $k$-algebra and each $\partial_i$ is a $k$-linear derivation of $R$. As explained before, these derivations will in general be noncommuting.
    \item Take $\D = k^{l+1}$ with the product $k$-algebra structure and the standard basis. Then $(R,\partial_1, \ldots, \partial_l)$ is a $\D$-ring if and only if $R$ is a $k$-algebra and each $\partial_i$ is a $k$-linear endomorphism of $R$. These endomorphisms will in general be noncommuting.
    \item We can combine the above examples. Let $\D = k[\varepsilon] / (\varepsilon^2) \times k$ with basis $(1,0), (\varepsilon,0), (0,1)$. Then a $\D$-ring $(R, \partial_1, \partial_2)$ is a $k$-algebra with a derivation $\partial_1$ and an endomorphism $\partial_2$.
    \item Let $\D = k[\varepsilon] / (\varepsilon^{l+1})$ with basis $1, \varepsilon, \ldots, \varepsilon^l$. Then $\D$-rings are $k$-algebras with non-iterative, truncated higher derivations $(\partial_1, \ldots, \partial_{l})$. That is, they satisfy the following: $\partial_i(xy) = \sum_{r+s=i} \partial_r(x) \partial_s(y)$.
\end{enumerate}
We refer the reader to \cite{moosa_scanlon_2013} for more examples.
\end{example}

Since $\D$ is a finite-dimensional $k$-algebra, it can be written as a finite product of local, finite-dimensional $k$-algebras $\D = \prod_{i=0}^t B_i$. Let $\mathfrak{m}_i$ be the unique maximal ideal of $B_i$. Then the residue field is a finite field extension of $k$: $B_i/\mathfrak{m}_i = k[x]/(P_i)$ for some $k$-irreducible polynomial $P_i$. We define the $k$-algebra homomorphisms $\pi_i \colon \D \to k[x]/(P_i)$ by the compositions $\D \to B_i \to k[x]/(P_i)$, and we let $\pi_i^R = \id_R \otimes \pi_i$ be the $k$-algebra homomorphism $R \otimes_k \D \to R[x]/(P_i)$ for any $k$-algebra $R$. Note that the $k$-algebra homomorphism $\pi \colon \D \to k$ gives a maximal ideal of $\D$ with residue field $k$. So $\pi$ must correspond to one of the $\pi_i$, say $\pi_0$, and $B_0$ has residue field $k$.

\begin{definition}\label{def-ass-hom}
Suppose $\partial \colon R \to S \otimes_k \D$ is a $\D$-operator along $a \colon R \to S$. Composing $\partial$ and the map $\pi_i^S$ gives the following $k$-algebra homomorphism:
\[
\begin{tikzcd}
R \arrow[r, "\partial"] & S \otimes_k \mathcal{D} \arrow[r, "\pi_i^S"] & {S[x]/(P_i)}.
\end{tikzcd}
\]
This is called the $i$th associated homomorphism, $\sigma_i$, of $\partial$.

Now, $\sigma_0 = \pi_0^S \circ \partial = \pi^S \circ \partial = a$ and the associated homomorphism corresponding to $B_0$ is always $a$.

Suppose now that $(R, \partial)$ is a $\D$-ring. If $\alpha \in R$ is a root of $P_i$, we have a map $R[x]/(P_i) \to R$. The composition of $\sigma_i$ with this map gives an endomorphism of $R$, $\sigma_{i,\alpha} \colon R \to R$. This endomorphism is $\alpha$-definable in $\Lring(\partial)$.
\end{definition}

As explained in the introduction, the presence of associated homomorphisms which induce nontrivial endomorphisms (in the sense of the paragraph above) means that theories such as $\RCF \cup \text{``$\D$-fields}$ and $\Th(\Q_p) \cup \text{``$\D$-fields}$ have no model companions. If there are associated homomorphisms which do not induce nontrivial endomorphisms, then we do not know whether these theories have model companions; we defer a more in-depth discussion of this to Section~\ref{sec-non-local}.

In \cite{moosa_scanlon_2013} the authors make the following assumption throughout. 

\begin{assumption}\label{res-field-k}
The residue field of each $B_i$ is $k$.
\end{assumption}

For Section~\ref{sec-uc}, we will do the same. As a result of this assumption, all the associated homomorphisms of a $\D$-ring are in fact endomorphisms, and much is simplified. All of the main motivating examples fit into this situation. Under this assumption, a $\D$-field $(F, \partial)$ has $t$ associated endomorphisms $\sigma_1, \ldots, \sigma_t$. Since the associated endomorphisms are $k$-linear combinations of the operators $\partial_1, \ldots, \partial_l$, we think of the structure $(F, \sigma_1, \ldots, \sigma_t)$---the associated difference field---as a reduct to the language $\Lring(\sigma_1, \ldots, \sigma_t) \subseteq \Lring(\partial)$.

In Sections~4--6, we will make the stronger assumption that $\D$ is local (Assumption~\ref{D-is-local}), and in Section~\ref{sec-non-local} we will discuss what happens in the absence of both assumptions.

\subsection{The prolongation of an affine variety}\label{subsec-prolongation}
For the geometric axioms of the uniform companion, we need the notion of a prolongation of a variety. We will give a similar presentation to \cite{moosa_scanlon_2013}; see \cite{moosa_scanlon_2010} for a more in-depth description of this object as a Weil restriction of a base change, and Remark~2.9 of \cite{moosa_scanlon_ghs} for a construction of the prolongation as an adjunction.

Let $(K,\partial)$ be a $\D$-field and $X$ a variety over $K$. The prolongation of $X$, $\tau X$, is a variety over $K$ with the defining property that for any field extension $L \geq K$, there is an identification $\tau X(L) \leftrightarrow X(L \otimes_k \D)$, where $X$ is viewed as a variety over $K \otimes_k \D$ via the base change coming from the map $\partial \colon K \to K \otimes_k \D$. If $(L, \delta)$ is a $\D$-field extension of $(K, \partial)$, then the above identification induces a map at the level of $L$-points $\nabla \colon X(L) \to \tau X(L)$.

If $X = \Spec K[x]/I$ is affine, where $x$ is a tuple of variables, then $\tau X = \Spec K[x^0, \ldots, x^l] / I'$ where $I'$ is constructed as follows (recall that $\dim_k \D = l+1)$. For each $f \in I$, let $f^\partial \in (K \otimes_k \D)[x]$ be the polynomial obtained by applying $\partial$ to the coefficients of $f$. Now compute
\[
f^\partial \left( \sum_{j=0}^l x^j \varepsilon_j \right) = \sum_{j=0}^l f^{(j)}(x^0, \ldots, x^l) \varepsilon_j .
\]
Then $I'$ is the ideal generated by all the $f^{(j)}$. With respect to these coordinates, the map $\nabla \colon X(L) \to \tau X(L)$ is given by $\nabla(a) = (a, \delta_1(a), \ldots, \delta_l(a))$.

If Assumption~\ref{res-field-k} holds, the $k$-algebra homomorphisms $\pi_i \colon \D \to k$ induce morphisms $\hat{\pi}_i \colon \tau X \to X^{\sigma_i}$, where $X^{\sigma_i}$ is just $X$ base changed via the associated endomorphism $\sigma_i \colon K \to K$. See Section 4.1 of \cite{moosa_scanlon_2010} for a discussion of these morphisms.

If Assumption~\ref{res-field-k} does not hold, but $\alpha \in K$ is a root of $P_i$, then the endomorphism $\sigma_{i, \alpha}$ induces a morphism $\hat{\pi}_{i, \alpha} \colon \tau X \to X^{\sigma_{i,\alpha}}$. This morphism does not follow from Section 4.1 of \cite{moosa_scanlon_2010}---see the discussion after Lemma~7.5 of \cite{moosa_scanlon_2013}. We will not need these morphisms in this paper, but they do appear in the axiomatisation of $\DCF$ when Assumption~\ref{res-field-k} does not hold; see Theorem~7.6 of \cite{moosa_scanlon_2013}.

\begin{example}
\begin{enumerate}
    \item If $\D = k[\varepsilon]/(\varepsilon^2)$, then the prolongation of $X$ is the twisted tangent bundle from \cite{pierce-pillay}. The morphism $\hat{\pi}_0$ is the usual coordinate projection.
    \item If $\D = k \times k$, so that $\D$-rings are precisely rings with a single endomorphism $\sigma$, then the prolongation of $X$ is $X \times X^\sigma$ which appears in the geometric axioms for ACFA in \cite{chatzidakis-hrushovski}. The morphism $\hat{\pi}_0$ is the projection to $X$, and $\hat{\pi}_1$ is the projection to $X^\sigma$.
\end{enumerate}
\end{example}

\begin{remark}
The prolongation of $X$ does not always exist. From Section~\ref{sec-uc}, we will work under the assumption that the residue field of each $B_i$ is $k$. Hence $\Spec B_i \to \Spec k$ is a universal homeomorphism for each $i=0, \ldots, t$, and the prolongation exists for every variety. See the corrigendum to \cite{moosa_scanlon_2010}. In any case, we will only be interested in affine varieties, for which the coordinate construction above suffices.
\end{remark}

\subsection{Large fields and difference large fields}\label{subsec-diff-alg-geo}

\begin{definition}
A field $K$ is large if every $K$-irreducible variety with a smooth $K$-rational point has a Zariski dense set of $K$-rational points. Equivalently, $K$ is large if it is existentially closed in $K((t))$.
\end{definition}

Large fields were first introduced by Pop \cite{pop96}. Most fields considered model-theoretically ``tame'' are large. For example, algebraically closed fields, real closed fields, and fields with a nontrivial henselian valuation are large. On the other hand, number fields and function fields are not large.

Large fields play an important role in Tressl's uniform companion for differential fields in \cite{tressl_uniform_2005}. We will now strengthen this notion to that of \emph{difference largeness}; these difference fields will play an analogous role here.

Recall that $\D$ has a decomposition as $\prod_{i=0}^t B_i$ where each $B_i$ is a local, finite-dimensional $k$-algebra. We impose Assumption~\ref{res-field-k}: the residue field of each $B_i$, which is necessarily a finite field extension of $k$, is $k$ itself. Since our uniform companion will be given ``relative'' to the associated difference field, to simplify notation we will also work with $\mathcal{E}$-operators where
\[
\mathcal{E} = k^{t+1}
\]
so that $\mathcal{E}$-fields are precisely fields with $t$ noncommuting endomorphisms.

Recall also that we have $k$-algebra homomorphisms $\pi_i \colon \D \to k$ given by the composition of the projection to $B_i$ and then the residue map to $k$. Since $k^{t+1}$ is the product, this induces a $k$-algebra homomorphism $\alpha \colon \D \to \mathcal{E}$ which is the product of the maps $\pi_i$. Note also that if $(K, \partial)$ is a $\D$-ring and $(K, \sigma)$ is its associated difference ring thought of as an $\mathcal{E}$-ring (so that $\sigma \colon K \to K^{t+1}$ is given by $r \mapsto (r, \sigma_1(r), \ldots, \sigma_t(r))$), then $\alpha \circ \partial = \sigma$. By Section~4.1 of \cite{moosa_scanlon_2010}, $\alpha$ induces a morphism of varieties $\hat{\alpha} \colon \tau_\D X \to \tau_{\mathcal{E}} X = X \times X^{\sigma_1} \times \ldots \times X^{\sigma_t}$ such that the following diagram commutes:
\[
\begin{tikzcd}
\tau_\D X(K) \arrow[rr, "\hat{\alpha}"] & & \tau_{\mathcal{E}} X (K) \\
& X(K) \arrow[lu, "\nabla_{\D}"] \arrow[ru, "\nabla_{\mathcal{E}}", swap] &
\end{tikzcd}
\]
Note also that $\hat{\alpha}$ is the product of the morphisms $\hat{\pi}_i$.

\begin{lemma}\label{lemma-difference-Z-point-and-ec}
Let $(K, \partial)$ be a $\D$-field, and $(K, \sigma)$ its associated difference field thought of as an $\mathcal{E}$-field. Suppose $X$ and $Y \subseteq \tau_\D X$ are irreducible varieties over $K$. Let $b$ be a $K$-generic point of $Y$. Then the following are equivalent:
\begin{enumerate}[\normalfont (1)]
    \item $Y$ has a Zariski-dense set of $K$-rational points whose projections to $\tau_\mathcal{E} X$ are in the image of $\nabla_\mathcal{E}$;
    \item there is some difference field containing the function field $K(b)$ in which $\hat{\alpha}(b)$ is in the image of $\nabla_\mathcal{E}$ and which is a difference field elementary extension of $(K, \sigma)$.
\end{enumerate}
\end{lemma}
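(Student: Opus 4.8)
The plan is to prove the equivalence $(1) \Leftrightarrow (2)$ by exploiting the genericity of $b$ and the model-theoretic meaning of ``difference large''. The point is that condition $(1)$ is a statement about Zariski-density of $K$-rational points on $Y$ satisfying an extra constraint, while condition $(2)$ is a single-point existential statement over the function field $K(b)$; the bridge between them is that $b$ is generic, so a sufficiently rich family of $K$-points specialising from $b$ is automatically Zariski-dense in $Y$.

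\medskip

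\textbf{Proof of $(2) \Rightarrow (1)$.} Suppose $(M, \sigma)$ is a difference field elementary extension of $(K, \sigma)$ with $K(b) \hookrightarrow M$ and $\hat{\alpha}(b) \in \nabla_{\mathcal{E}}(X(M))$. Say $\hat{\alpha}(b) = \nabla_{\mathcal{E}}(a)$ for some $a \in X(M)$. Then there is a quantifier-free formula over $K$ (in the language of difference rings, using the coordinates of $\tau_{\D}X$ and $X$) asserting: ``there is a point $y \in Y$ and a point $x \in X$ with $\hat{\alpha}(y) = \nabla_{\mathcal{E}}(x)$.'' This formula is satisfied in $(M, \sigma)$, witnessed by $(b, a)$; hence, since $(K, \sigma) \preceq (M, \sigma)$, it is satisfied in $(K, \sigma)$. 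To get \emph{Zariski-density} rather than a single point, I would instead run this argument relative to a nonempty Zariski-open $U \subseteq Y$ defined over $K$: since $b$ is $K$-generic in $Y$, we have $b \in U(M)$, so the above existential statement localised to $U$ also holds in $(K, \sigma)$, producing a $K$-rational point of $Y$ lying in $U$ whose projection to $\tau_{\mathcal{E}}X$ is in the image of $\nabla_{\mathcal{E}}$. As $U$ ranges over all nonempty $K$-open subsets of $Y$, these points are Zariski-dense, giving $(1)$.

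\medskip

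\textbf{Proof of $(1) \Rightarrow (2)$.} Conversely, assume $(1)$. Consider the difference field $(K(b), \sigma_{K(b)})$; I need to produce, inside some elementary extension of $(K, \sigma)$, a copy of $K(b)$ in which $\hat{\alpha}(b)$ becomes $\nabla_{\mathcal{E}}$ of a point. By a compactness argument it suffices to show that every finite conjunction of formulas in $\mathrm{qftp}(b/K)$ together with the statement ``$\hat{\alpha}(b) \in \mathrm{im}(\nabla_{\mathcal{E}})$'' is consistent with the elementary diagram of $(K,\sigma)$. Each such finite conjunction of the quantifier-free type cuts out a nonempty Zariski-open $U \subseteq Y$ over $K$ (using that $b$ is generic, any quantifier-free formula over $K$ true of $b$ holds on a dense open of $Y$). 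By $(1)$, there is a $K$-rational point $y \in U(K)$ with $\hat{\alpha}(y) = \nabla_{\mathcal{E}}(x)$ for some $x \in X(K)$; this $y$ witnesses consistency in $(K, \sigma)$ itself. So the relevant type is finitely satisfiable in $(K, \sigma)$, hence realised in an elementary extension $(M, \sigma) \succeq (K, \sigma)$ by some $b'$; the $K$-isomorphism $K(b) \cong K(b')$ of function fields then transfers the required property, and we take the difference field generated by $K(b')$ inside $(M, \sigma)$.

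\medskip

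\textbf{The main obstacle} is the direction $(1) \Rightarrow (2)$, and specifically making precise that the quantifier-free type of the generic point $b$ is ``finitely satisfiable by $K$-rational points of dense opens of $Y$''. One must be careful that ``$\hat{\alpha}(b) \in \mathrm{im}(\nabla_{\mathcal{E}})$'' is expressible: in the $\tau_{\mathcal{E}}$-coordinates, $\nabla_{\mathcal{E}}(x) = (x, \sigma_1(x), \ldots, \sigma_t(x))$, so this is the quantifier-free condition on a point $z \in \tau_{\mathcal{E}}X = X \times X^{\sigma_1} \times \cdots \times X^{\sigma_t}$ that its $i$-th block equals $\sigma_i$ applied to its $0$-th block — a condition in the language of difference rings. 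With that in hand, the genericity of $b$ does all the work: a quantifier-free $\Lring(\sigma)$-formula over $K$ holding of $b$ must hold on a Zariski-dense subset of $Y$'s $K$-points once we invoke $(1)$, because any such formula is (up to the difference-field structure, which on $K$ is already present) essentially a Zariski-locally-closed condition, and $(1)$ guarantees density of the good locus. The rest is a routine compactness-and-elementarity argument.
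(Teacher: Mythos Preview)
Your proof is correct and follows essentially the same approach as the paper. Both directions hinge on the same compactness argument: for $(1)\Rightarrow(2)$ you show that $\qftp(b/K)$ together with the constraint $\sigma_i(\hat{\pi}_0(x))=\hat{\pi}_i(x)$ is finitely satisfiable in $(K,\sigma)$ (because $b$ generic means any finite fragment of the type cuts out a nonempty open in $Y$, and $(1)$ supplies a $K$-point there with the right projection), then realise in an elementary extension; for $(2)\Rightarrow(1)$ you descend the existential statement localised to an arbitrary open $U$ via elementarity. The paper does exactly this for $(1)\Rightarrow(2)$, phrasing the type as $p(x)=\qftp(b/K)\cup\{\sigma_i(\hat{\pi}_0(x))=\hat{\pi}_i(x):i=1,\ldots,t\}$, and simply writes ``Clear'' for $(2)\Rightarrow(1)$, so your added detail there is welcome.

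One cosmetic point: your opening phrase ``Consider the difference field $(K(b),\sigma_{K(b)})$'' is premature, since $K(b)$ carries no difference structure until you have embedded it into the elementary extension; the argument you actually run does not use any such structure, so just drop that clause.
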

\begin{proof}
(1)$\implies$(2). Working with respect to the coordinates in Section~\ref{subsec-prolongation}, saying that $\hat{\alpha}(b)$ is in the image of $\nabla_\mathcal{E}$ is equivalent to saying that $\sigma_i(\hat{\pi}_0(b)) = \hat{\pi}_i(b)$ for each $i=1, \ldots, t$.

Consider the following set of formulas with parameters in $K$ in the language of difference rings:
\[
p(x) = \qftp(b/K) \cup \{\sigma_i(\hat{\pi}_0(x)) = \hat{\pi}_i(x) \colon i = 1, \ldots, t\}.
\]
Since $b$ is $K$-generic in $Y$ and $Y$ has a Zariski-dense set of $K$-rational points $c$ with $\sigma_i(\hat{\pi}_0(c)) = \hat{\pi}_i(c)$, $p(x)$ is finitely satisiable in $(K, \sigma)$, and hence is a partial type. So there is some difference field $(L, \sigma) \succeq (K, \sigma)$ with a realisation of $p(x)$. This is precisely (2).

(2)$\implies$(1). Clear.
\end{proof}

Recall from the introduction that we cannot hope to uniformly find the model companion for $\D$-fields whose underlying field is large; we need to take into account the associated difference field. The following definition facilitates this.

\begin{definition}\label{def-difference-large}
A difference field $(K, \sigma_1, \ldots, \sigma_t)$ is difference large if it is large and for all irreducible, affine $K$-varieties $X$ and $Y$ such that $Y \subseteq X \times X^{\sigma_1} \times \ldots \times X^{\sigma_t}$, each projection $\hat{\pi}_i \colon Y \to X^{\sigma_i}$ for $i=0, \ldots, t$ is dominant, and $Y$ has a smooth $K$-rational point, we have that $Y$ has a Zariski-dense set of $K$-rational points of the form $(a, \sigma_1(a), \ldots, \sigma_t(a))$ for $a \in X(K)$.
\end{definition}

\begin{remark}\label{rem-difference-large}
\begin{enumerate}
    \item If $t=0$, that is, $\D$ is local, then difference largeness is just largeness. If $t>0$, then the only known examples of difference large fields are models of $\text{ACFA}_{0,t}$; hence we will focus on the local case for consequences and examples in Sections~4--6.
    \item This notion first appeared (for $t=1$) in Cousins's thesis \cite{cousins-thesis}.
\end{enumerate}
\end{remark}

\section{The uniform companion}\label{sec-uc}

Before we discuss the axioms for the uniform companion, we need some results about extending $\D$-ring structures. We carry over the notation from the previous section. In particular, $k$ is a field of characteristic $0$, and $\D$ is a finite-dimensional $k$-algebra. We write the local decomposition of $\D$ as $\prod_{i=0}^t B_i$ and the residue field of $B_i$ as $k[x]/(P_i)$.

\begin{lemma}\label{extending-D-structures}
Suppose $R$ is a $k$-algebra, $S$ is an $R$-algebra given by $a \colon R \to S$, and $\partial \colon R \to S \otimes_k \D$ is a $\D$-operator along $a$ (that is, $\pi^S \circ \partial = a$). Let $\sigma_i \colon R \to S[x]/(P_i)$ be the associated homomorphisms of $\partial$, $\pi_i^S \circ \partial$. Let $\tau_i \colon S \to S[x]/(P_i)$ be $k$-algebra homomorphisms extending $\sigma_i$. If $S$ is $0$-smooth over $R$, there is an extension of $\partial$ to a $\D$-ring structure on $S$ with associated homomorphisms $\tau_i$. If $S$ is $0$-\'{e}tale over $R$, there is a unique such extension.
\end{lemma}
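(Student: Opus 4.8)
The plan is to work with the explicit coordinate description of a $\D$-ring structure. Extending $\partial$ to a $\D$-ring structure on $S$ along $a$ means giving a $k$-algebra homomorphism $\partial^S \colon S \to S \otimes_k \D$ such that $\pi^S \circ \partial^S = \id_S$ and $\partial^S \circ a = (a \otimes \id_\D) \circ \partial$. I would first recall that, writing $\D = \prod_{i=0}^t B_i$, the map $\partial^S$ is the same data as a compatible tuple of its associated homomorphisms $\tau_i \colon S \to S[x]/(P_i)$: more precisely, since $\D \to \prod_i B_i$ and each $B_i \to k[x]/(P_i)$ is a nilpotent-kernel (hence infinitesimal) extension of $k$-algebras, the composite $\D \to \prod_i k[x]/(P_i)$ has nilpotent kernel, and a $k$-algebra homomorphism $S \to S \otimes_k \D$ is determined by—and, subject to a formal smoothness lifting, can be built from—its reductions $\tau_i$. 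So the first reduction is: the extension problem for $\partial^S$ is the problem of lifting the given tuple $(\tau_i)$ through the nilpotent surjection $S \otimes_k \D \twoheadrightarrow \prod_i S[x]/(P_i)$, subject to compatibility over $R$ (i.e. $\partial^S \circ a$ must equal the prescribed map, which itself reduces correctly to $\sigma_i \circ a$ by hypothesis).

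The second, main step is the lifting itself, which is exactly where $0$-smoothness of $S/R$ enters. I would phrase it as follows. Consider the $R$-algebra $S$, and consider $S \otimes_k \D$ as an $S$-algebra (via $s \mapsto s \otimes 1$) and more relevantly as an $R$-algebra via the composite $R \xrightarrow{\partial} R\otimes_k\D \to S \otimes_k \D$. The target $\prod_i S[x]/(P_i)$ receives the prescribed map from $S$ (namely $(\tau_i)_i$) and a compatible map from $R$ (namely $(\sigma_i)_i$, which agrees with the $S$-map precomposed with $a$ because each $\tau_i$ extends $\sigma_i$). The kernel of $S \otimes_k \D \to \prod_i S[x]/(P_i)$ is nilpotent (it is generated by the nilpotent kernel $N$ of $\D \to \prod_i k[x]/(P_i)$, so it is $S \otimes_k N$, and $N^m = 0$ for some $m$). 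Now $0$-smoothness of $S$ over $R$ is precisely the assertion that in any such square

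\[
\begin{tikzcd}
R \arrow[r] \arrow[d, "a"'] & S \otimes_k \D \arrow[d, twoheadrightarrow] \\
S \arrow[r] \arrow[ru, dashed] & \textstyle\prod_i S[x]/(P_i)
\end{tikzcd}
\]

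with nilpotent vertical kernel on the right, a lift $S \to S \otimes_k \D$ exists; and $0$-\'etaleness says it is unique. One has to check the square commutes: going $R \to S \to \prod_i S[x]/(P_i)$ gives $(\tau_i \circ a)_i = (\sigma_i)_i$; going $R \to S \otimes_k \D \to \prod_i S[x]/(P_i)$ gives $(\pi_i^S \circ \partial \circ (\text{incl}))$, which unwinds to $(\sigma_i)_i$ as well by the definition of the $\sigma_i$. A lift $\partial^S$ then automatically satisfies $\pi^S \circ \partial^S = \id_S$: indeed $\pi = \pi_0$ is one of the factor-then-residue maps (namely the $B_0$ one, with $P_0 = x$ up to units), so $\pi^S \circ \partial^S$ is the $0$th component of the reduction $(\tau_i)_i$, which is $\tau_0 = \sigma_0$ extended $= a$ extended $= \id_S$ since $\sigma_0 = a$. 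Hence $\partial^S$ is a genuine $\D$-operator along $\id_S$, i.e. a $\D$-ring structure on $S$, with the prescribed associated homomorphisms, and extending $\partial$.

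The remaining bookkeeping step is to make sure I am using the correct notion of $0$-smooth/$0$-\'etale and that the lifting criterion applies verbatim: these are the standard infinitesimal (formal) lifting properties against \emph{nilpotent} (equivalently, finite-length square-zero-filtered) thickenings, which is exactly the shape of $S \otimes_k \D \twoheadrightarrow \prod_i S[x]/(P_i)$, so there is no issue with completeness or $I$-adic subtleties. The main obstacle, and the only place real content is used, is recognising the extension problem as an infinitesimal lifting problem of $R$-algebras against a nilpotent surjection and verifying the commutativity of the relevant square; once that identification is made, smoothness gives existence and \'etaleness gives uniqueness essentially by definition. I would also remark that when $t = 0$ (so $\D = B_0$ local) there is a single $\tau_0 = \id_S$ and the statement recovers the familiar fact that a $k$-linear derivation (or truncated higher derivation) extends uniquely along a $0$-\'etale, and non-uniquely along a $0$-smooth, extension.
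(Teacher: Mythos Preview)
Your proof is correct and follows essentially the same approach as the paper: both recognize the extension problem as an infinitesimal lifting problem for the $0$-smooth (resp.\ $0$-\'etale) $R$-algebra $S$ against a surjection with nilpotent kernel, the only cosmetic difference being that the paper lifts to each local factor $S \otimes_k B_i \twoheadrightarrow S[x]/(P_i)$ separately and then takes the product, whereas you lift directly to $S \otimes_k \D \twoheadrightarrow \prod_i S[x]/(P_i)$. One minor slip: you describe the $R$-algebra structure on $S \otimes_k \D$ as coming from a composite $R \xrightarrow{\partial} R \otimes_k \D \to S \otimes_k \D$, but $\partial$ already lands in $S \otimes_k \D$ (it is a $\D$-operator along $a$, not a $\D$-ring structure on $R$); this does not affect the argument.
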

\begin{proof}
Let $(S[x]/(P_i))^{\sigma_i}$ be the $R$-algebra which as a ring is just $S[x]/(P_i)$ but whose $R$-algebra structure is given by $\sigma_i \colon R \to S[x]/(P_i)$.

Let $(S \otimes_k B_i)^\partial$ be the $R$-algebra which as a ring is just $S \otimes_k B_i$ but whose $R$-algebra structure is given by
\[
\begin{tikzcd}
    R \arrow[r, "\partial"] & S \otimes_k \D \arrow[r] & S \otimes_k B_i.
\end{tikzcd}
\]

Similarly, $(S \otimes_k \D)^\partial$ has $R$-algebra structure $\partial \colon R \to S \otimes_k \D$.

Then $\tau_i$ is an $R$-algebra homomorphism $S \to (S[x]/(P_i))^{\sigma_i}$. The quotient map $(S \otimes_k B_i)^\partial \to (S[x]/P_i)^{\sigma_i}$ is a surjective $R$-algebra homomorphism with nilpotent kernel $S \otimes_k \mathfrak{m}_i$. Since $R \to S$ is $0$-smooth ($0$-\'{e}tale), there is a (unique) $R$-algebra homomorphism $S \to (S \otimes_k B_i)^\partial$ whose composition with the quotient map is $\tau_i$. Combining these for each $i$ gives a (unique) $R$-algebra homomorphism $S \to (S \otimes_k \D)^\partial$, that is, a (unique) $\D$-ring structure on $S$ extending $\partial$ with associated homomorphisms $\tau_i$.
\end{proof}

\begin{remark}\label{remark-ext-D-str}
\begin{enumerate}
    \item Separable extensions are $0$-smooth. Separable algebraic extensions are $0$-\'{e}tale. Localisations are $0$-\'{e}tale. See 25.3 and 26.9 of \cite{matsumura_ring} for details.

    \item When $\D$ is local, this lemma appears as Lemma~2.7 in \cite{ozlem_2018}.
\end{enumerate}
\end{remark}

As a consequence of Lemma~\ref{extending-D-structures}, the problem of extending $\D$-field structures to field extensions reduces to that of extending the associated homomorphisms. In general, extending endomorphisms in a given class of fields is not always possible; for instance, in the class of real fields. Thus constructing the model companion of a $\D$-field must be done relative to the associated difference field.

For the rest of this section, we impose Assumption~\ref{res-field-k}.

\begin{definition}
Let $(K,\partial)$ be a $\D$-field. We say that $(K,\partial)$ is a model of $\UCd$ if the following holds:

if $X$ is a $K$-irreducible affine variety and $Y \subseteq \tau_\D X$ is a $K$-irreducible affine variety such that $\hat{\pi}_i (Y)$ is Zariski dense in $X^{\sigma_i}$ for each $i=0, \ldots, t$, and $Y$ has a smooth $K$-rational point, then for every nonempty Zariski open set $U \subseteq Y$ over $K$ there exists some $a \in X(K)$ such that $\nabla(a) \in U(K)$. 
\end{definition}

\begin{remark}
If $(K, \partial) \models \UCd$, then the associated difference field $(K, \sigma)$ is difference large. The proof is similar to that of Proposition~4.12 of \cite{moosa_scanlon_2013}.
\end{remark}

This axiom scheme can be expressed in a first-order fashion in the language $\Lring(\partial)$. This is nowadays a standard argument, but we provide some details; we follow the argument used in \cite{tressl_uniform_2005}. We will make use of Theorem~4.2 from there, which collects results about ideals of polynomials from \cite{van_den_dries_bounds_1984}.

\begin{theorem*}[Theorem~4.2 of \cite{tressl_uniform_2005}]
Let $n,d \in \N$, $x = (x_1, \ldots, x_n)$. Then there are bounds $B = B(n,d)$, $C=C(n,d)$, and $E = E(n,d)$ in $\N$ such that for each field $K$, each ideal $I$ of $K[x]$ generated by polynomials of degree at most $d$, and all $f_1, \ldots, f_p \in K[x]$ of degree at most $d$, the following are true.
\begin{enumerate}
    \item[\normalfont (i)] If $I$ is generated by $f_1, \ldots, f_p$, then for every $g \in I$ of degree at most $d$, there are $c_1, \ldots, c_p \in K[x]$ of degree at most $E$ such that $g = c_1 f_1 + \ldots + c_p f_p$.
    \item[\normalfont (ii)] $I$ is prime if and only if $1 \not \in I$ and for all $f,g \in K[x]$ of degree at most $B$, if $fg \in I$, then $f \in I$ or $g \in I$.
    \item[\normalfont (iii)] For all $m \in \{1, \ldots, n \}$, the ideal $I \cap K[x_1, \ldots, x_m]$ is generated by at most $C$ polynomials of degree at most $C$.
\end{enumerate}
\end{theorem*}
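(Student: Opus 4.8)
The plan is to recover this statement as a compilation of the uniform bounds of van den Dries and Schmidt \cite{van_den_dries_bounds_1984}, organised around part (i) — the ideal membership bound — from which the other two parts follow. The role of (i) is that it turns ideal membership among polynomials of bounded degree into a first-order condition on their coefficients: once $E(n,d)$ is available, ``$g$ of degree $\le d$ lies in the ideal generated by $f_1,\dots,f_p$ of degree $\le d$'' is equivalent to ``there are $c_1,\dots,c_p$ of degree $\le E(n,d)$ with $g=\sum_i c_i f_i$'', and the latter, comparing coefficients, is the solvability of an explicit finite system of $K$-linear equations in the (boundedly many) coefficients of the $c_i$ — hence a condition on the coefficients of $g$ and the $f_i$ that is uniformly definable over the prime field. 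So I would prove (i) first and bootstrap. One harmless preliminary reduction: since each $f_i$ lies in the $\binom{n+d}{n}$-dimensional $K$-space $K[x]_{\le d}$, we may replace $f_1,\dots,f_p$ by a maximal $K$-linearly independent subset (which generates the same ideal) and then pad the found cofactors with zeros, so it suffices to prove (i) with $p\le\binom{n+d}{n}$.

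For (i) there are two routes. The safe one is effective elimination theory: after a generic linear change of coordinates putting the ideal into Noether position, Hermann's algorithm (in the form corrected by Seidenberg, and analysed by Mayr--Meyer and others) reduces ideal membership to a bounded cascade of Gaussian eliminations over $K[x_1]$; carrying the degree bookkeeping through the induction on $n$ yields an $E(n,d)$ of the shape $d^{2^{O(n)}}$, and — this is the point — it depends only on $n$ and $d$, because the algorithm performs only field arithmetic and the degree estimates never refer to $K$. Equivalently, one may invoke Dub\'e's field-independent bound on the degrees occurring in a reduced Gr\"obner basis, together with the division algorithm, which bounds the cofactor degrees. The alternative is the nonstandard argument of \cite{van_den_dries_bounds_1984}: if no $E(n,d)$ existed, choose for each $m$ a field $K_m$ and data $f_i^{(m)},g_m$ of degree $\le d$ with $g_m\in(f_i^{(m)})$ but admitting no representation whose cofactors all have degree $\le m$, and form $L=\prod_m K_m/\mathcal U$ with ultralimits $f_i=[f_i^{(m)}]$ and $g=[g_m]$ in the honest Noetherian polynomial ring $L[x]=L\otimes_{\Q}\Q[x]$ (not the ultrapower of $K[x]$). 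One shows $g\in(f_i)$ already in $L[x]$; being a polynomial identity, this membership is then witnessed with cofactors of some fixed finite degree $N_0$, so by the theorem of \L{}o\'{s} one has $g_m=\sum_i c_i^{(m)}f_i^{(m)}$ with all $\deg c_i^{(m)}\le N_0$ for $\mathcal U$-almost all $m$, contradicting the choice of $g_m$ for $m>N_0$. The one genuinely delicate step here is the descent ``$g\in(f_i)$ in $L[x]$'', which \cite{van_den_dries_bounds_1984} handles via flatness.

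Part (iii) follows from elimination theory with the same degree bookkeeping: $I\cap K[x_1,\dots,x_m]$ is obtained by eliminating $x_{m+1},\dots,x_n$ one variable at a time (by resultants, or by a Gr\"obner basis for an elimination term order), and each step inflates degrees and the number of generators by an amount bounded purely in terms of the current data, so iterating gives $C(n,d)$ bounding both the number and the degrees of a generating set, again independently of $K$. For part (ii), the ``only if'' direction is immediate (a prime ideal satisfies the implication for all degrees, not just $\le B$), so the content is ``if''. Using (i) and (iii) — extended routinely to the larger degree $2d$ where $fg$ appears — one builds, by a Hermann-style induction on dimension, a primary decomposition $I=Q_1\cap\dots\cap Q_r$ in which $r$, the degrees of generators of the $Q_j$, and the degrees of generators of the radicals $\mathfrak p_j=\sqrt{Q_j}$ are all bounded in terms of $n$ and $d$. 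If $I$ is not prime then either $I$ is not radical, in which case an effective radical computation supplies $f$ of bounded degree with $f\notin I$ but $f^N\in I$ for some (bounded) $N$, and taking $N$ minimal makes $f\cdot f^{N-1}\in I$ with $f,f^{N-1}\notin I$ a witness; or $I$ is radical and $r\ge 2$, and prime avoidance (applied to the incomparable minimal primes $\mathfrak p_j$, with generators of bounded degree) produces bounded-degree $f\in\bigcap_{j\ge 2}\mathfrak p_j\setminus\mathfrak p_1$ and $g\in\mathfrak p_1\setminus\bigcup_{j\ge 2}\mathfrak p_j$ with $fg\in\bigcap_j\mathfrak p_j=I$ but $f,g\notin I$. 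Reading off the resulting degree bound yields $B(n,d)$.

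The main obstacle is concentrated entirely in part (i), and within it in the assertion that the degree bound is \emph{independent of the field}: on the effective route this amounts to checking that Hermann's corrected algorithm and its estimates are purely field-arithmetic, and on the nonstandard route it is the flatness argument for the descent of ideal membership across the ultraproduct. Everything after (i) — the elimination bookkeeping for (iii), and the primary-decomposition/prime-avoidance argument for (ii) — is routine, if tedious, and I would cite \cite{van_den_dries_bounds_1984} for the details rather than reproduce them.
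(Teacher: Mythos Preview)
The paper does not prove this statement at all: it is quoted verbatim as Theorem~4.2 of \cite{tressl_uniform_2005}, which in turn is described there as collecting results from \cite{van_den_dries_bounds_1984}, and is used as a black box to argue that the axiom scheme $\UCd$ is first-order. So there is no ``paper's own proof'' to compare against; your proposal is supplying an argument where the paper simply cites one.

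That said, your sketch is a faithful account of how these bounds are actually obtained in the literature. The nonstandard route you describe for (i) is precisely the method of \cite{van_den_dries_bounds_1984}, and your derivations of (ii) and (iii) from (i) via effective elimination and primary decomposition are the standard ones. If you were writing this up independently, the only place I would push back is on the phrase ``routine, if tedious'' for (ii): the construction of a primary decomposition with uniformly bounded data is itself one of the substantial results of \cite{van_den_dries_bounds_1984} (their Theorem~2.10 and surrounding discussion), not a bookkeeping exercise, so it deserves a citation rather than a dismissal. But since the paper treats the whole theorem as a citation anyway, no proof is required here.
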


Let $n,d,m \in \N$, $x = (x_1, \ldots, x_n)$, $f_1, \ldots, f_m \in K[x]$, $g_1, \ldots, g_m, h \in K[x^0, \ldots, x^l]$, all of degree at most $d$. The polynomials $f_j$ generate the ideal that defines $X$, $I_X$, the polynomials $g_j$ generate the ideal that defines $Y$, $I_Y$, and the nonvanishing of the polynomial $h$ will define the open subset $U$. Then also the polynomials $f_1^{\sigma_i}, \ldots, f_m^{\sigma_i}$ generate the ideal that defines $X^{\sigma_i}$. The fact that $K$-irreducibility of $X$ and $Y$ can be expressed as first-order $\Lring$-axioms comes from (i) and (ii). That $Y \subseteq \tau X$ can be checked by verifying that the polynomials that define $\tau X$ are elements of $I_Y$; that this is a first-order condition follows from (i). Indeed the polynomials that define $\tau X$ can be computed in terms of the coefficients of the $f_i$ (see the discussion at the end of Section~3 of \cite{moosa_scanlon_2013}). That $\hat{\pi}_0 \colon Y \to X$ is dominant says that $I_Y \cap K[x] = I_X$. By (iii) there is a bound, depending on $n$ and $d$, on the number and degree of the polynomials needed to generate $I_Y \cap K[x]$. That this equality is first-order comes from (i). A similar argument shows that dominance of $\hat{\pi}_i \colon Y \to X^{\sigma_i}$ is a first-order condition. The existence of a smooth $K$-point for $Y$ can be verified by the Jacobian condition on the $g_i$. Note also that the dimension of $Y$ is the Krull dimension of $I_Y$, which is definable in terms of the coefficients of the $g_i$. Finally, that there is a point $\nabla(a)$ in the nonempty Zariski open set $U$ is equivalent to the fact that that the polynomial $h$ either is an element of $I_Y$, which is first-order by (i), or does not vanish at some $\nabla(a)$.

The fact that the theory $\UCd$ is the desired uniform companion will follow from the next two theorems (Theorem~\ref{same-ex-theory} and Theorem~\ref{elext-large}). These form the $\D$-field analogue of Theorem~6.2 from \cite{tressl_uniform_2005}, where the differential counterpart is stated.

\begin{theorem}\label{same-ex-theory}
Let $M,N \models \UCd$ be $\D$-fields, and $A$ a common $\D$-subring. If $M$ and $N$ have the same existential theory over $A$ as difference fields, then they do as $\D$-fields.
\end{theorem}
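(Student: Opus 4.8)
The plan is to prove this by a back-and-forth / compactness argument: to show $M$ and $N$ have the same existential $\D$-theory over $A$, it suffices (by taking an existential sentence over $A$, realized say in $M$ by a tuple) to embed a finitely generated $\D$-subring of $M$ over $A$ into an elementary extension of $N$ as $\D$-rings. By symmetry and iteration, the real content is the following embedding lemma: given a finitely generated $\D$-ring extension $A \leq C$ with $C \leq M$, there is a $\D$-embedding of $C$ into some $N' \succeq N$ fixing $A$. Since $M \models \UCd$, the hypothesis that $M,N$ have the same existential difference-theory over $A$ will be used to transport the geometric configuration witnessing $C$ inside $M$ over to $N$.

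First I would set up the geometry. Writing $C = A[c]$ for a finite tuple $c$, let $X$ be the $K$-irreducible affine variety (over $K = \operatorname{Frac}(A)$, or rather over a suitable field of definition) whose generic point is $c$ in the underlying field, and let $Y \subseteq \tau_\D X$ be the $K$-irreducible variety whose generic point is $\nabla(c) = (c, \partial_1(c), \ldots, \partial_l(c))$. Because $(M,\partial)$ is a $\D$-field, $\nabla(c) \in \tau_\D X(M)$, and because $c$ is generic in $X$ the projection $\hat\pi_0 \colon Y \to X$ is dominant; moreover, since the associated endomorphisms $\sigma_i$ of $M$ restrict to those of $C$ and $\sigma_i(c)$ generates $X^{\sigma_i}$, the projections $\hat\pi_i \colon Y \to X^{\sigma_i}$ are dominant for all $i = 0, \ldots, t$. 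One has to be slightly careful about smoothness: $Y$ need not have a smooth $K$-point a priori, but generic smoothness in characteristic $0$ gives a smooth point of $Y$ over the function field, and one passes to a suitable localization — i.e. one enlarges $A$ along a $0$-étale (localization) extension inside $M$, which by Lemma~\ref{extending-D-structures} carries a unique compatible $\D$-structure — so that we may assume the relevant Zariski-open locus of smooth points is witnessed. (Alternatively, absorb this by replacing $Y$ with its smooth locus and using that $\UCd$ quantifies over all nonempty opens $U$.)

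Next I would use the existential-difference-theory hypothesis. The statement "$X$, $Y$ are $K$-irreducible affine varieties, $Y \subseteq \tau_\D X$, each $\hat\pi_i(Y)$ Zariski dense in $X^{\sigma_i}$, $Y$ has a smooth $K$-point, and $U \subseteq Y$ is a nonempty open" is, by the first-order expressibility discussion preceding the theorem, a conjunction of $\Lring(\sigma_1,\ldots,\sigma_t)$-conditions on the coefficients of the defining polynomials of $X$, $Y$, $U$ — and these coefficients lie in $A$ (after the localization above). The varieties $X^{\sigma_i}$ and the ideal of $\tau_\D X$ depend only on the difference-field reduct of $A$. Hence this configuration, being satisfied in $M$ and expressible existentially-with-the-right-quantifier-pattern over $A$ in the difference language — more precisely, we only need that the configuration's existence is an $\Lring(\sigma)$-fact true in $(M,\sigma)$ over $A$ — transfers to $(N,\sigma)$: either directly, or by the observation that $A$ and its localization sit inside $N$ too (again by Lemma~\ref{extending-D-structures}), so the same polynomials define the same configuration over $N$. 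Now apply $N \models \UCd$: there is $a \in X(N)$ with $\nabla(a) \in U(N)$. Taking $U$ to range over a cofinal family of opens and using compactness/saturation (pass to $N' \succeq N$ sufficiently saturated), we obtain $a \in X(N')$ with $\nabla(a)$ a generic point of $Y$ over $A$. Then the field map $c \mapsto a$ is an $A$-algebra isomorphism $A[c] \to A[a]$, and since $\nabla(c) \mapsto \nabla(a)$ it respects the $\D$-operators $\partial_j$; this is the desired $\D$-embedding of $C$ into $N'$.

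**The main obstacle** I expect is bookkeeping around the base: the defining data of $X$, $Y$, $U$ need coefficients in $A$ (not just in $M$), which forces the preliminary localization step, and one must check that localizations of $A$ inside $M$ embed into $N'$ compatibly with the difference structure and that the transferred configuration over $N$ really uses only parameters from (this localization of) $A$. One must also handle the smoothness hypothesis of $\UCd$ honestly — justifying that after localization $Y$ acquires a smooth $K$-rational point, using generic smoothness and that $K = \operatorname{Frac}(A)$ embeds in both $M$ and $N$ — and verify the dominance of $\hat\pi_i \colon Y \to X^{\sigma_i}$ genuinely follows from $c$ being generic together with $\sigma_i$ being the associated endomorphism (rather than something stronger being needed). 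Once these points are pinned down, the transfer of the existential difference-theory plus the axiom $\UCd$ does all the work, and the back-and-forth closes by symmetry.
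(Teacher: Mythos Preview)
Your strategy---take $X=\operatorname{loc}(c)$ and $Y=\operatorname{loc}(\nabla c)$ in $M$, transfer the configuration to $N$, apply $\UCd$ there---is exactly the paper's. The gap is in the transfer. To invoke $\UCd$ in $N$ you need $X$ and $Y$ to be \emph{$N$-irreducible}, not merely $K$-irreducible for $K=\operatorname{Frac}(A)$. Irreducibility over the ambient field is not an existential (nor even quantifier-free) condition on the coefficients---via the bounds you cite it is $\forall\exists$---so it does not pass from $M$ to $N$ under the hypothesis on existential difference-theories. Your assertion that the whole configuration is ``a conjunction of $\Lring(\sigma)$-conditions on the coefficients'' that simply holds in $N$ because the coefficients lie in $A$ is precisely where the argument breaks. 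Your localization manoeuvre does not help with the smooth point either: the generic point $\nabla c$ is smooth but lives in $M$, and inverting elements of $A$ manufactures no new rational points over $K$ or over $N$.

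The paper handles both problems at once by first replacing the base with $L$, the relative algebraic closure of $F=\operatorname{Frac}(A)$ inside $M$ and inside $N$; these are isomorphic as $\D$-fields (existential equivalence gives the field isomorphism, and Lemma~\ref{extending-D-structures} upgrades it uniquely). Taking $X,Y$ as loci over $L$ makes $L$-irreducibility automatic, and since $L$ is relatively algebraically closed in $N$ this yields $N$-irreducibility for free. The coefficients of $X,Y$ now sit in $L$ rather than $F$, so to bring the existential hypothesis (which is over $A$, hence over $F$) to bear, the paper descends to $F(\alpha)$ via the primitive element theorem, packages the equations of $Y$ together with the minimal polynomial of $\alpha$ into a single system $(\dagger)$ over $F$, and transfers the full quantifier-free difference type of $(\alpha,\nabla c)$ to a realization $(\beta,b)$ in a saturated $N$. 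This single move delivers both the $\D$-field isomorphism $\theta\colon F\langle\alpha\rangle_\sigma\to F\langle\beta\rangle_\sigma$ carrying $Y$ to an $N$-irreducible $Y^\theta$, and the smooth $N$-rational point $b\in Y^\theta$. You were right that the base needs care; the missing idea is that the care is \emph{algebraic}---pass to the relative algebraic closure---rather than a localization.
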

\begin{proof}
Let $F_1$ and $F_2$ be the quotient fields of $A$ inside $M$ and $N$, respectively. Since the associated endomorphisms of $A$ are injective, they extend uniquely to $F_1$ and $F_2$. By Lemma~\ref{extending-D-structures}, the $\D$-structure on $A$ extends uniquely to $F_1$ and $F_2$, and so they are isomorphic as $\D$-fields. Then we may assume $F = F_1 = F_2$ is contained in both $M$ and $N$. Let $L_1$ be the relative algebraic closure of $F$ in $M$ and similarly for $L_2$ in $N$. Then the associated endomorphisms of $M$ and $N$ restrict to endomorphisms of $L_1$ and $L_2$ respectively, and hence their $\D$-field structures must also. By Remark~\ref{remark-ext-D-str}, $L_1$ and $L_2$ are isomorphic as $\D$-fields; we may then assume $L=L_1=L_2$ is contained in both $M$ and $N$.

Suppose that $M \vDash \exists \bar{x} \phi(\bar{x})$ where $\phi(\bar{x})$ is a quantifier-free $\Lring(\partial)$-formula with parameters in $A$ and $\bar{x} = (x_1, \ldots, x_m)$. As usual, we can assume that $\phi$ is of the form $\bigwedge_{i=1}^n f_i(\bar{x}) = 0$, where the $f_i$ are $\Lring(\partial)$-terms with coefficients in $F$. Let $c_0$ be such that $M \models \phi(c_0)$. Let $\Xi$ be the set of all finite words on $\{\partial_1, \ldots, \partial_l\}$. For each $r$ let $\Xi_r$ be an enumeration of the words of length at most $r$ such that $\Xi_r$ is an initial segment of $\Xi_{r+1}$, and let $n_r = |\Xi_r|$. Let $\nabla_r \colon M \to M^{n_r}$ be given by $b \mapsto (\xi(b) \colon \xi \in \Xi_r)$. Let $r$ be minimal such that $\phi(M) = \{ b \in M^m \colon \nabla_r(b) \in Z \}$ where $Z \subseteq M^{mn_r}$ is a Zariski closed set over $F$. If $r=0$, then $\phi$ is in fact an $\Lring$-formula and since $M$ and $N$ have the same existential theory over $A$ as fields, we have a solution in $N$. So assume $r>0$ and let $c = \nabla_{r-1}(c_0)$.

Let $X = \text{loc}(c / L)$, $Y = \text{loc}(\nabla c / L)$. Note that $Y \subseteq \tau X$ and that $\hat{\pi}_i(Y)$ is Zariski dense in $X^{\sigma_i}$.  Let $g_1, \ldots, g_s$ be polynomials that generate the vanishing ideal of $Y$ over $L$. By the primitive element theorem, let $\alpha \in L$ be such that $F(\alpha)$ is a field of definition for $X$ and $Y$. After clearing denominators, we can rewrite the polynomials $g_i(\bar{u})$ instead as $G_i(v, \bar{u}) \in F[v, \bar{u}]$, where $G_i(\alpha, \bar{u}) = g_i(\bar{u})$. Let $\mu(v) \in F[v]$ be the minimal polynomial of $\alpha$.

We claim that $(\alpha, \nabla c) \in M$ is a smooth solution to the system
\[
\mu(v) = 0, G_1(v, \bar{u})=0, \ldots, G_s(v, \bar{u})=0 . \tag{$\dagger$}
\]

As $\nabla c$ is $L$-generic in $Y$, it must be a smooth solution to the system $g_1(\bar{u})=0, \ldots, g_s(\bar{u})=0$. Let $J(\nabla c)$ be the Jacobian for $g_1, \ldots, g_s$ at $\nabla c$, and let $d$ be its rank. Since $\mu(v)$ contains none of the $\bar{u}$ variables, the Jacobian of the system $(\dagger)$ at $(\alpha, \nabla c)$ is of the form

\[
\mleft(
\begin{array}{c c}
  \frac{d\mu}{dv}(\alpha) & 0 \\
  * & J(\nabla c)
\end{array}
\mright).
\]

Since $\mu$ is the minimal polynomial of $\alpha$, $\frac{d\mu}{dv}(\alpha) \not = 0$, and hence this matrix has rank $d+1$. Note also that the variety defined by $(\dagger)$ is a finite union of conjugates of $Y$, and hence has the same dimension. So $(\alpha, \nabla c)$ is a smooth point of $(\dagger)$.

Consider the quantifier-free $\Lring(\sigma_1, \ldots, \sigma_t)$-type of $(\alpha, \nabla c) \in M$ over $F$. Since $M$ and $N$ have the same existential theory over $F$ as difference fields, this partial type is finitely satisfiable in $N$. We may also assume that $N$ is sufficiently saturated. Then there is a realisation $(\beta, b) \in N$ of this partial type. This induces a difference field $F$-isomorphism $\theta \colon F\langle \alpha \rangle_\sigma \to F \langle \beta \rangle_\sigma$ where $F \langle \alpha \rangle_\sigma$ means the difference field generated by $F$ and $\alpha$ and likewise for $\beta$. We also have that $b$ is a smooth point of $Y^\theta$.

Both $F \langle \alpha \rangle_\sigma$ and $F \langle \beta \rangle_\sigma$ are algebraic extensions of $F$, and hence by Remark~\ref{remark-ext-D-str}, the $\D$-field structure on $F$ extends uniquely to $\D$-field structures on $F\langle \alpha \rangle_\sigma$ and $F\langle \beta \rangle_\sigma$. So $\theta$ is a $\D$-field isomorphism between sub-$\D$-fields of $M$ and $N$.

Since $Y$ is $L$-irreducible, $Y^\theta$ is $L$-irreducible, and since $L$ is relatively algebraically closed in $N$, $Y^\theta$ is $N$-irreducible. Likewise, $X^\theta$ is $N$-irreducible. Proposition~4.8 of \cite{moosa_scanlon_2010} tells us that $\tau X^\theta \simeq \tau X$, and that this isomorphism respects points of the form $\nabla z$. We also have that $Y^\theta \subseteq \tau X^\theta$ and that $\hat{\pi}_i(Y^\theta)$ is Zariski dense in $(X^\theta)^{\sigma_i}$. Since $\theta$ fixes $F$ it also fixes $Z$.

Since $N \vDash \UCd$, there is a point $a \in X^\theta(N)$ with $\nabla a \in Y^\theta(N)$. Let $a_0$ be the first $m$ coordinates of $a$. We claim that $a_0$ is a realisation of $\phi$, which will conclude the proof. As in the proof of Theorem~4.5 of \cite{moosa_scanlon_2013}, we prove first that $\nabla_{r-1}(a_0) = a$. Write $a = (a_\xi \colon \xi \in \Xi_{r-1})$. We prove by induction on the length of $\xi$ that $\xi(a_0) = a_\xi$. For $\xi = \id$ this is clear. Suppose now that $\xi = \partial_i \xi'$. Since $\nabla_{r-1}(c_0) = c$, we have that $\partial_i c_{\xi'} = c_\xi$. This is an algebraic fact about $\nabla c$ over $F$. Since $\nabla a$ satisfies all the algebraic relations $\nabla c$ does over $F$ (since $\theta$ fixes $F$), we also have $\partial_i a_{\xi'} = a_\xi$. By the inductive hypothesis, $\partial_i a_{\xi'} = \partial_i \xi'(a_0) = \xi(a_0)$.

Since $c_0$ realises $\phi$, $\nabla_r(c_0) \in Z$. This is an algebraic fact about $\nabla c = \nabla \nabla_{r-1}(c_0)$ over $F$. Since $\nabla a$ satisfies all the algebraic relations $\nabla c$ does over $F$, we have $\nabla_r(a_0) \in Z$. So $N \models \phi(a_0)$.
\end{proof}

\begin{theorem}\label{elext-large}
Every $\D$-field that is difference large as a difference field has a $\D$-field extension which is a model of $\UCd$ and an elementary extension at the level of difference fields.
\end{theorem}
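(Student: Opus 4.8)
The plan is to produce the desired model of $\UCd$ as the union of an $\omega$-chain of $\D$-fields in which each link realizes all instances of the $\UCd$-axiom scheme already visible over the previous link. The key step is a one-step lemma: \emph{every $\D$-field $(K,\partial)$ whose associated difference field $(K,\sigma)$ is difference large has a $\D$-field extension $(K^{+},\partial)$ with $(K,\sigma)\preceq(K^{+},\sigma)$ — so $(K^{+},\sigma)$ is again difference large, difference largeness being first-order by the bounds argument used above for $\UCd$ — such that for every $K$-irreducible affine $X$, every $K$-irreducible affine $Y\subseteq\tau_{\D}X$ with $\hat{\pi}_{i}(Y)$ Zariski dense in $X^{\sigma_{i}}$ for $i=0,\dots,t$ and with a smooth $K$-rational point, and every nonempty Zariski open $U\subseteq Y$ over $K$, there is $a\in X(K^{+})$ with $\nabla_{\D}(a)\in U(K^{+})$.} Given this, set $K^{0}=K$, $K^{n+1}=(K^{n})^{+}$, $M=\bigcup_{n}K^{n}$: then $(K,\sigma)\preceq(M,\sigma)$ as a union of an elementary chain of difference fields, and $(M,\partial)\models\UCd$ because any instance over $M$ — together with the finitely many polynomials and the smooth point witnessing it — already lives over some $K^{n}$ (one checks that $M$-irreducibility, dominance of the projections, smoothness of the given point, and nonemptiness of $U$ all descend to $K^{n}$), hence is realized in $K^{n+1}\subseteq M$.

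The one-step lemma follows from a routine transfinite iteration — enumerate the instances over $K$ and build them in one at a time along a chain, taking unions at limits — once we have the core content, which (after a routine reduction carrying the $\D$-structure along by Lemma~\ref{extending-D-structures} to arrange that $X$ and $Y$ are geometrically irreducible) is the following \emph{core claim}: if $(K,\partial)$ is a $\D$-field with $(K,\sigma)$ difference large and $X$, $Y\subseteq\tau_{\D}X$, $U$ are as in the one-step lemma, then there is a $\D$-field extension $(L,\partial)\supseteq(K,\partial)$ with $(K,\sigma)\preceq(L,\sigma)$ and some $a\in X(L)$ with $\nabla_{\D}(a)\in U(L)$.

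To prove the core claim, let $b$ be a $K$-generic point of $Y$; since $U$ is dense in the irreducible $Y$, $b\in U$. The decisive step is to verify condition~(1) of Lemma~\ref{lemma-difference-Z-point-and-ec} for $X$ and $Y$, namely that $Y$ has a Zariski dense set of $K$-rational points $q$ with $\hat{\pi}_{i}(q)=\sigma_{i}(\hat{\pi}_{0}(q))$ for all $i$; this is exactly where difference largeness is used, applied to an auxiliary variety that presents $Y$ as a subvariety of a product $X'\times(X')^{\sigma_{1}}\times\dots\times(X')^{\sigma_{t}}$. Lemma~\ref{lemma-difference-Z-point-and-ec} then yields a difference field $(L,\sigma)\succeq(K,\sigma)$ containing $K(b)$ in which $\hat{\alpha}(b)$ lies in the image of $\nabla_{\mathcal{E}}$, equivalently $\sigma_{i}(\hat{\pi}_{0}(b))=\hat{\pi}_{i}(b)$ for $i=1,\dots,t$. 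It remains to put a $\D$-structure on $L$. Writing $a=\hat{\pi}_{0}(b)\in X(L)$ and using $\tau_{\D}X(L)=X(L\otimes_{k}\D)$, the point $b$ gives a $\D$-operator on $K[a]$ along the inclusion $K[a]\hookrightarrow L$ extending $\partial|_{K}$ with $\nabla_{\D}(a)=b$; its $i$-th associated homomorphism sends $a$ to $\hat{\pi}_{i}(b)=\sigma_{i}(a)$, so it agrees with $\sigma_{i}$ on $K[a]$ and on $K$. Since $L/K[a]$ is $0$-smooth in characteristic $0$, Lemma~\ref{extending-D-structures} extends this to a $\D$-ring structure $\partial_{L}$ on all of $L$ with associated endomorphisms the given $\sigma_{i}$; then $(L,\partial_{L})$ extends $(K,\partial)$, satisfies $(K,\sigma)\preceq(L,\sigma)$, and has $\nabla_{\D}(a)=b\in U(L)$, as required.

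The main obstacle is the decisive step of the core claim: reconciling the geometry of $Y\subseteq\tau_{\D}X$ with difference largeness, which only concerns subvarieties of products $X'\times(X')^{\sigma_{1}}\times\dots\times(X')^{\sigma_{t}}$. The delicate points are to choose the auxiliary variety so that its distinguished $\nabla_{\mathcal{E}}$-points project onto a Zariski dense subset of $Y$, and to verify the three hypotheses of difference largeness for it — irreducibility, dominance of all $t+1$ projections, and a smooth $K$-rational point. For the last I would deduce a smooth $K$-point of $X$ from the given smooth $K$-point of $Y$ (using largeness and the fact that $\hat{\pi}_{0}\colon\tau_{\D}X\to X$ is smooth over the smooth locus of $X$) and then invoke generic smoothness. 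Everything else — the chain bookkeeping, the reduction to geometrically irreducible data, and the application of Lemma~\ref{extending-D-structures} — is routine.
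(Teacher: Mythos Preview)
Your overall architecture matches the paper's: build the extension as a transfinite union, reduce to a single-instance core claim, verify condition~(1) of Lemma~\ref{lemma-difference-Z-point-and-ec} via difference largeness applied to an auxiliary variety inside $X \times X^{\sigma_1} \times \cdots \times X^{\sigma_t}$, and then extend the $\D$-structure to the resulting elementary difference-field extension using the identification $\tau_\D X(L) \leftrightarrow X(L \otimes_k \D)$ together with Lemma~\ref{extending-D-structures}. That part is correct and is exactly what the paper does.

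The gap is in the decisive step, and your proposed route for the smooth $K$-rational point does not work as stated. The paper's auxiliary variety is simply $Z = \overline{\hat{\alpha}(Y)} \subseteq X \times X^{\sigma_1} \times \cdots \times X^{\sigma_t}$; irreducibility and dominance of all $t+1$ projections follow immediately from the hypotheses on $Y$. For the smooth $K$-point of $Z$, the paper does \emph{not} go through $X$: instead it lets $V$ be the smooth locus of $Z$, observes that $\hat{\alpha}^{-1}(V)$ is a nonempty open subset of $Y$ (since $\hat{\alpha}|_Y \colon Y \to Z$ is dominant), and then uses ordinary largeness applied to $Y$---which has a smooth $K$-point by hypothesis---to produce a $K$-point of $\hat{\alpha}^{-1}(V)$, whose image is the required smooth $K$-point of $Z$. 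Your plan of first producing a smooth $K$-point of $X$ and then ``invoking generic smoothness'' does not bridge this: generic smoothness only tells you that the smooth locus of the auxiliary variety is dense, not that it contains a $K$-rational point, and a smooth $K$-point of $X$ tells you nothing about the fibre of $Z$ above it. Also, your preliminary reduction to geometrically irreducible $X$ and $Y$ is unnecessary (difference largeness is stated for $K$-irreducible varieties, which is exactly what the $\UCd$-instance hands you) and not obviously routine, since passing to an algebraic extension of $K$ would require checking that difference largeness is preserved.
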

\begin{proof}
Let $(F, \partial)$ be a $\D$-field that is difference large as a difference field, and let $X$ and $Y$ be $F$-irreducible varieties where $Y \subseteq \tau X$, $\hat{\pi}_i(Y)$ is Zariski dense in $X^{\sigma_i}$, and $Y$ has a smooth $F$-rational point. Let $U$ be a nonempty Zariski-open subset of $Y$ defined over $F$.

Let $b \in Y(L)$ be an $F$-generic point in some field extension $L$ of $F$. Since $\hat{\pi}_i(Y)$ is Zariski dense in $X^{\sigma_i}$, we get that $\hat{\pi}_i(b)$ is $F$-generic in $X^{\sigma_i}$. Let $\hat{\alpha} \colon \tau X \to X \times X^{\sigma_1} \times \ldots \times X^{\sigma_t}$ be the product of the morphisms $\hat{\pi}_i$. Let $Z$ be the Zariski-closure of $\hat{\alpha}(Y)$ in $X \times X^{\sigma_1} \times \ldots \times X^{\sigma_t}$. Then $\hat{\alpha} \colon Y \to Z$ is dominant. Let $V$ be the $F$-open subset of smooth points of $Z$. By dominance, $V$ has a point in the image of $\hat{\alpha}$. Then $\hat{\alpha}^{-1}(V)$ is a nonempty $F$-open set. Since $F$ is large and $Y$ has a smooth $F$-rational point, $\hat{\alpha}^{-1}(V)$ has an $F$-rational point. So $V$ has an $F$-rational point---that is, $Z$ has an $F$-rational smooth point.

Let $W \subseteq Y$ be some open subset of $Y$. Since $Y$ is irreducible, $W$ is dense in $Y$. Then $\hat{\alpha}(W)$ is dense in $Z$. As $Z$ has a smooth $F$-rational point and $F$ is difference large, $Z$ has a Zariski dense set of $F$-rational points of the form $(a, \sigma_i(a), \ldots, \sigma_t(a))$. So $Y$ has a Zariski-dense set of $F$-rational points whose projections have the form $(a, \sigma_i(a), \ldots, \sigma_t(a))$.

So by Lemma~\ref{lemma-difference-Z-point-and-ec} there is some difference field $(K, \sigma)$ containing $F(b)$ which is an elementary extension of $(F, \sigma)$ and in which $\sigma_i(\hat{\pi}_0(b)) = \hat{\pi}_i(b)$. We will define a $\D$-field structure on $K$ whose associated difference field is $(K, \sigma)$.

As mentioned in Section~\ref{subsec-prolongation}, there is an identification $\tau X(K) \leftrightarrow X(K \otimes_k \D)$. Let $b'$ be the tuple from $K \otimes_k \D$ that corresponds to $b \in \tau X(K)$ under this identification. Note that $\pi_i(b') = \sigma_i(\pi_0(b'))$ in $K$ because $\hat{\pi}_i(b) = \sigma_i(\hat{\pi}_0(b))$. Write $a = \hat{\pi}_0(b)$ for the $F$-generic point of $X$.

As in the proof of Theorem~4.5 of \cite{moosa_scanlon_2013}, we can extend $\partial \colon F \to F \otimes_k \D \subseteq K \otimes_k \D$ to a $k$-algebra homomorphism $\partial \colon F[a] \to K \otimes_k \D$ with $\partial(a)=b'$. 

Indeed, since $b \in \tau X(K)$, we have $p^\partial(b') = 0$ for all $p \in I(X/F)$. Since $a$ is $F$-generic in $X$, $I(X/F) = I(a/F)$. As $p^\partial(b') = 0$ for all $p \in I(a/F)$, $\partial$ extends to $F[a] = F[x]/I(a/F)$ by setting $\partial(a) = b'$. Since $\hat{\pi}_0(b) = a$, we have that $\pi_0^K (b') = a$ so $\pi_0^K \circ \partial \colon F[a] \to K$ is inclusion. We also have that $\pi_i \circ \partial (a) = \pi_i(b') = \sigma_i(\pi_0(b')) = \sigma_i(a)$. So $\pi_i \circ \partial = \sigma_i$. Now we can use Lemma~\ref{extending-D-structures} to extend $\partial$ to a $\D$-field structure on $K$ extending the one on $F$ whose associated endomorphisms are precisely the $\sigma_i$. In $(K, \partial)$ we will also have $\nabla(a) = b$. Since $b$ is $F$-generic in $Y$, we must have that $b = \nabla a \in U(K)$.

Then we can iterate this construction transfinitely to get an extension of $F$ that is elementary as an extension of difference fields, which is also a model of $\UCd$.
\end{proof}

\begin{proposition}
The $\Lring(\partial)$-theory $\UCd$ is inductive. If $U$ is an $\Lring(\partial)$-theory of difference large $\D$-fields satisfying the properties in the previous two theorems (Theorems~\ref{same-ex-theory} and \ref{elext-large}), then $U$ contains $\UCd$. If $U$ is in addition inductive, $U = \UCd \cup \text{``difference large fields''}$, where containment and equality here are as deductively closed sets of sentences.
\end{proposition}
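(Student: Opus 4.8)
The plan is to establish the three assertions in turn; throughout I write (1) for the property of Theorem~\ref{same-ex-theory} and (2) for that of Theorem~\ref{elext-large}, so that by those theorems $\UCd$ satisfies (1) and (2), and by hypothesis so does $U$. For inductiveness I would argue semantically, via the Chang--{\L}o\'{s}--Suszko theorem: it suffices to show that the class of models of $\UCd$ is closed under unions of chains. So let $(K_\lambda,\partial)_{\lambda<\mu}$ be a chain of models of $\UCd$ with union $(K,\partial)$, and fix an instance $X$, $Y\subseteq\tau_\D X$, $U\subseteq Y$ of the axiom scheme over $K$. Choose $\lambda$ large enough that $K_\lambda$ contains the finitely many coefficients defining $X,Y,U$ and a smooth $K$-rational point of $Y$. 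Then $X$ and $Y$ are $K_\lambda$-irreducible (irreducibility descends to subfields), the conditions $Y\subseteq\tau_\D X$ and ``$\hat{\pi}_i(Y)$ is Zariski dense in $X^{\sigma_i}$'' hold over $K_\lambda$ (both descend under the faithfully flat base change $K_\lambda\to K$), and $Y$ has a smooth $K_\lambda$-rational point; since $K_\lambda\models\UCd$ there is $a\in X(K_\lambda)\subseteq X(K)$ with $\nabla(a)\in U(K_\lambda)\subseteq U(K)$, so $K\models\UCd$. This step is routine.

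For ``$U$ contains $\UCd$'' I must show every $M\models U$ satisfies $\UCd$. The plan is to alternately apply (2) for $\UCd$ and (2) for $U$ to build a chain of $\D$-fields $M=M_0\subseteq N_0\subseteq M_1\subseteq N_1\subseteq\cdots$ with $M_i\models U$, $N_i\models\UCd$, and every inclusion an elementary extension at the level of difference fields --- this is possible because every model of $U$ and every model of $\UCd$ is difference large (the latter by the Remark after the definition of $\UCd$). Let $P$ be the union. As $\bigcup_i N_i$ it is an increasing union of $\D$-field models of $\UCd$, so $P\models\UCd$ by the first part; and $M\preceq P$ as difference fields, so $M$ is relatively algebraically closed in $P$ and $P/M$ is a regular extension of fields. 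Now fix any instance of the scheme $\UCd$ over $M$. Because $P/M$ is regular, all its geometric hypotheses --- irreducibility of $X$ and $Y$, the containment $Y\subseteq\tau_\D X$, dominance of each $\hat{\pi}_i\colon Y\to X^{\sigma_i}$, the existence of a smooth rational point --- persist over $P$, so $P\models\UCd$ gives $a\in X(P)$ with $\nabla(a)\in U(P)$. The tuple $a$ already lies in some $M_i$, and since ``$\bar x\in X$ and $\nabla(\bar x)\in U$'' is a quantifier-free $\Lring(\partial)$-condition, $M_i$ satisfies the corresponding existential $\Lring(\partial)$-sentence with parameters from $M$. As $M\preceq M_i$ as difference fields they have the same existential difference-field theory over $M$, so by (1) for $U$ (with common $\D$-subring $M$) they have the same existential $\Lring(\partial)$-theory over $M$; hence that sentence holds in $M$, i.e.\ there is $a\in X(M)$ with $\nabla(a)\in U(M)$. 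Since the instance was arbitrary, $M\models\UCd$.

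Now suppose $U$ is in addition inductive, hence equivalent to a set of $\forall\exists$-sentences. Given $N_0\models\UCd$, I build the analogous chain $N_0\subseteq M_0\subseteq N_1\subseteq M_1\subseteq\cdots$, starting from a model of $\UCd$, with $N_i\models\UCd$, $M_i\models U$, each step elementary as difference fields, and let $P$ be its union; as $\bigcup_i M_i$ it is an increasing union of $\D$-field models of $U$, so $P\models U$ because $U$ is inductive. For a $\forall\exists$-axiom $\forall\bar v\,\exists\bar w\,\chi(\bar v,\bar w)$ of $U$ (with $\chi$ quantifier-free) and any tuple $\bar c$ from $N_0$, we get $P\models\exists\bar w\,\chi(\bar c,\bar w)$; a witness lies in some $N_i$, and since $\chi$ is quantifier-free and $N_i$ is a $\D$-substructure of $P$, $N_i\models\exists\bar w\,\chi(\bar c,\bar w)$; as $N_0\preceq N_i$ as difference fields, Theorem~\ref{same-ex-theory} applied to the models $N_0,N_i$ of $\UCd$ (common $\D$-subring $N_0$) gives the same existential $\Lring(\partial)$-theory over $N_0$, so $N_0\models\exists\bar w\,\chi(\bar c,\bar w)$. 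Hence $N_0\models U$. Combined with the previous paragraph, $U$ and $\UCd$ have exactly the same models; since every model of $\UCd$ is difference large, these are precisely the models of $\UCd\cup\text{``difference large fields''}$, which yields the asserted equality of deductively closed sets.

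The delicate point is the middle paragraph, and above all the bookkeeping of which property is used where: the geometric hypotheses of a $\UCd$-instance are verified over $M$ and pushed \emph{up} to the union $P$ --- the only place where membership in $\UCd$ is available --- while the resulting existential $\Lring(\partial)$-conclusion must be pushed back \emph{down} to $M$, and that descent is exactly where (1) for $U$ enters, in tandem with the difference-field elementarity of the chain. The one genuinely geometric ingredient, persistence of irreducibility (and of dominance of the $\hat{\pi}_i$) along the regular extension $P/M$, is the same sort of fact already used in the proof of Theorem~\ref{same-ex-theory}.
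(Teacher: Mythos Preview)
Your proof is correct and follows the same chain-building strategy that the paper invokes (the paper simply cites Proposition~6.3 of Tressl after noting inductiveness, and Tressl's argument is exactly this alternating-chain construction together with property~(1) to pull existential witnesses back down).

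One small simplification: in your middle paragraph you work geometrically, pushing the hypotheses of a specific $\UCd$-instance up along the regular extension $P/M$. This is fine, but unnecessary. Having already shown that $\UCd$ is inductive, it is equivalent to a set of $\forall\exists$-sentences, so you can run the exact argument of your third paragraph with the roles of $U$ and $\UCd$ swapped: for a $\forall\exists$-axiom $\forall\bar v\,\exists\bar w\,\chi$ of $\UCd$ and $\bar c\in M$, use $P\models\UCd$ to get a witness, locate it in some $M_i$, and descend via property~(1) for $U$. This avoids the appeal to persistence of irreducibility and dominance along $P/M$ and makes the two halves of the minimality argument visibly symmetric.
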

\begin{proof}
It is clear that the union of an increasing chain of models of $\UCd$ is also a model of $\UCd$, and likewise for difference large fields. Hence the theories $\UCd$ and ``difference large fields'' are inductive. Using Theorems~\ref{same-ex-theory} and \ref{elext-large}, the rest of the argument is the same as in Proposition~6.3 of \cite{tressl_uniform_2005}.
\end{proof}

Having now established Theorems~\ref{same-ex-theory} and \ref{elext-large}, the following theorem is proved in precisely the same way as its differential counterpart: Theorem~7.1 of \cite{tressl_uniform_2005}.

\begin{theorem}\label{uc-first-theorem}
Let $C$ be a set of new constant symbols and $T$ a model complete theory of difference large fields in the language $\Lring(\sigma)(C)$. Let $T^*$ be a theory in a language $\L^* \supseteq \Lring(\sigma)(C)$ such that $T^* \supseteq T$ and is an extension by definitions of $T$, that is, the only sentences added to $T^*$ are those defining the new symbols in $\L^*$ as $\Lring(\sigma)(C)$-formulas. In addition, let $A$ be an $\mathcal{L}^*(\partial)$-structure such that when $A$ is viewed as an $\Lring(\partial)$-structure it is a $\D$-field.

If $T^* \cup \diag(A \restriction {\mathcal{L}^*})$ is complete, then $T^* \cup \UCd \cup \diag(A)$ is complete. 
\end{theorem}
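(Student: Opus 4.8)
The plan is to fix two models $M, N \models T^* \cup \UCd \cup \diag(A)$ and show $M \equiv_A N$; since the theory will also be seen to be consistent, this gives completeness. First I reduce to the language $\Lring(\partial)(C)$: in a model of $T^* \cup \UCd$ the interpretations of the symbols of $\L^* \setminus \Lring(\sigma)(C)$ are fixed by the definitions in $T^*$ and those of $\sigma_1, \dots, \sigma_t$ by the defining equations of the associated endomorphisms, so the whole $\L^*(\partial)$-structure is determined by the $\Lring(\partial)(C)$-reduct; it therefore suffices to prove $M \equiv_A N$ as $\Lring(\partial)(C)$-structures. Completeness of $T^* \cup \diag(A \restriction \L^*)$ gives $M \restriction \L^* \equiv_A N \restriction \L^*$, and passing to $\Lring(\sigma)(C)$-reducts, $M$ and $N$ have the same existential theory over $A$ as difference fields. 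Since $M, N \models \UCd$ and $A$ is a common $\D$-subring of them, Theorem~\ref{same-ex-theory} upgrades this to: $M$ and $N$ have the same existential $\Lring(\partial)(C)$-theory over $A$.

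To pass from equal existential theories to elementary equivalence, I use that $T \cup \UCd$ is model complete. This follows from Theorem~\ref{same-ex-theory} by Robinson's test: if $M_0 \subseteq N_0$ are models of $T \cup \UCd$, their difference reducts are models of $T$, so $M_0 \preceq N_0$ as difference fields by model completeness of $T$, and then Theorem~\ref{same-ex-theory} with base $M_0$ makes $M_0$ existentially closed in $N_0$ as a $\D$-field. Now, because $M$ and $N$ have the same existential $\Lring(\partial)(C)$-theory over $A$, the usual diagram lemma yields an embedding of $N$ over $A$ into some elementary extension $M^* \succeq M$; as $N$ and $M^*$ are models of $T \cup \UCd$ with $N$ a substructure of $M^*$, model completeness gives $N \preceq M^*$, and combined with $M \preceq M^*$ this gives $M \equiv_A N$ as $\Lring(\partial)(C)$-structures, hence, by the reduction above, as $\L^*(\partial)$-structures.

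It remains to check consistency. Take any model $M_0 \models T^* \cup \diag(A \restriction \L^*)$. Its reduct to $\Lring(\sigma)(C)$ is a model of $T$, hence difference large, and is a field extension of $A$; since we are in characteristic $0$ this extension is separable, so Lemma~\ref{extending-D-structures} extends the $\D$-ring structure of $A$ to it, taking for the associated endomorphisms the endomorphisms $\sigma_i$ of $M_0$. This makes $M_0 \restriction \Lring(\sigma)(C)$ into a $\D$-field whose difference reduct is a model of $T$, so by Theorem~\ref{elext-large} it has a $\D$-field extension $E \models \UCd$ whose difference reduct is an elementary extension of that of $M_0$; expanding $E$ by the definitions of $T^*$ produces a model of $T^* \cup \UCd$ which, the difference reduct having grown only elementarily, has $M_0 \restriction \L^*$ and hence $A$ as a substructure. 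The real content of the whole argument is Theorem~\ref{same-ex-theory}; granting it and Theorem~\ref{elext-large}, everything else—reducts, the extension by definitions, Robinson's test, the diagram lemma—is soft, exactly as in the proof of Theorem~7.1 of \cite{tressl_uniform_2005}. The \textbf{one point needing care} is this consistency check, which relies on Lemma~\ref{extending-D-structures} being applicable to every field extension in characteristic $0$ and on the fact that passing to an elementary difference-field extension leaves the $\L^*$-structure already present on $A$ untouched.
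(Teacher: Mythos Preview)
Your proof is correct and follows essentially the route the paper indicates: the paper gives no details at all, simply saying the result ``is proved in precisely the same way as its differential counterpart: Theorem~7.1 of \cite{tressl_uniform_2005}'', and what you have written is exactly a faithful unpacking of that argument in the present setting, resting on Theorems~\ref{same-ex-theory} and~\ref{elext-large} together with Lemma~\ref{extending-D-structures}. The only organisational difference is that you establish model completeness of $T \cup \UCd$ directly via Robinson's test inside this proof, whereas the paper derives model completeness of $T^* \cup \UCd$ afterwards (Theorem~\ref{UC-consequences}(i)) \emph{from} Theorem~\ref{uc-first-theorem}; your ordering is not circular and is arguably cleaner.
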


In the next theorem we argue why Theorem~\ref{uc-first-theorem} justifies calling $\UCd$ the uniform companion for theories of difference large $\D$-fields. This is the precise formulation of Theorem~\ref{thm-one-intro} from the introduction.

\begin{theorem}\label{UC-consequences}
Let $C$ be a set of new constant symbols and $T$ a model complete theory of difference large fields in the language $\Lring(\sigma)(C)$. Let $T^*$ be an $\mathcal{L}^*$-theory which is an extension by definitions of $T$.

Assume $T^*$ is a model companion of an $\mathcal{L}^*$-theory $T_0^*$ which extends the theory of difference fields. Then:
\begin{enumerate}
    \item[\emph{(i)}] $T^* \cup \UCd$ is a model companion of the $\L^*(\partial)$-theory $T_0^* \cup \text{``$\D$-fields''}$;
    \item[\emph{(ii)}] if $T^*$ is a model completion of $T_0^*$, then $T^* \cup \UCd$ is a model completion of $T_0^* \cup \text{``$\D$-fields''}$;
    \item[\emph{(iii)}] if $T^*$ has quantifier elimination, then $T^* \cup \UCd$ has quantifier elimination;
    \item[\emph{(iv)}] if $T$ is complete and $M$ is a $\D$-field which is a model of $T$, then $T^* \cup \UCd \cup \diag(\mathcal{C})$ is complete, where $\mathcal{C}$ is the $\Lring(C)(\partial)$-substructure generated by $\emptyset$ in $M$, that is, the $\D$-subring of $M$ generated by the elements $(c^M)_{c \in C}$.
\end{enumerate}
\end{theorem}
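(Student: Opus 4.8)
The plan is to deduce all four parts from Theorem~\ref{uc-first-theorem} by unwinding the relevant model-theoretic definitions, invoking Theorem~\ref{elext-large} directly and Theorem~\ref{uc-first-theorem} (which itself rests on Theorem~\ref{same-ex-theory}), exactly as in the differential case of~\cite{tressl_uniform_2005}. The recurring technical device is that in characteristic $0$ every field extension is $0$-smooth, so by Lemma~\ref{extending-D-structures} a $\D$-field structure on a field lifts along any field extension as soon as the associated endomorphisms lift; in particular, if $A$ is a $\D$-field whose $\L^*$-reduct embeds into a model $M^*$ of $T^*$, then $\partial^A$ extends to a $\D$-field structure on $M^*$ with associated endomorphisms $\sigma_i^{M^*}$ and associated difference field $M^*\restriction\Lring(\sigma)(C)$, and symmetrically with $T_0^*$ in place of $T^*$.

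For~(i) I would first prove model-consistency in both directions. Given $A\models T_0^*\cup\text{``$\D$-fields''}$, embed $A\restriction\L^*$ into some $M^*\models T^*$, lift the $\D$-structure as above, note that $M^*$ is difference large because its associated difference field models $T$, and apply Theorem~\ref{elext-large} to obtain a $\D$-field extension $N\models\UCd$ with $M^*\restriction\Lring(\sigma)(C)\preceq N\restriction\Lring(\sigma)(C)$; since $T^*$ is an extension by definitions of $T$, this forces $N\restriction\L^*\models T^*$, so $A$ embeds into the model $N$ of $T^*\cup\UCd$. Conversely a model of $T^*\cup\UCd$ has $\L^*$-reduct a model of $T^*$, which embeds into a model of $T_0^*$, to which the $\D$-structure again lifts. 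For model completeness, given $\D$-fields $M\subseteq N$ both modelling $T^*\cup\UCd$, apply Theorem~\ref{uc-first-theorem} with $A=M$: since $T^*$ is model complete, $T^*\cup\diag(M\restriction\L^*)$ is complete, hence $T^*\cup\UCd\cup\diag(M)$ is complete, and therefore $M\preceq N$. Together with the already-established inductivity of $\UCd$, this gives~(i).

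Parts~(ii) and~(iii) reduce to a completeness-over-a-substructure statement that Theorem~\ref{uc-first-theorem} hands off to $T^*$. Given a $\D$-subring $A$ of a model of $T^*\cup\UCd$, I would pass to its fraction field $F$, which carries the unique $\D$-structure extending that of $A$ by Lemma~\ref{extending-D-structures}; since any model of $T^*\cup\UCd$ containing $A$ contains $F$ with that forced structure, completeness of $T^*\cup\UCd\cup\diag(F)$ gives completeness of $T^*\cup\UCd\cup\diag(A)$, and Theorem~\ref{uc-first-theorem} reduces the former to completeness of $T^*\cup\diag(F\restriction\L^*)$. For~(iii) this is precisely the quantifier-elimination criterion for $T^*$ (a theory eliminates quantifiers if and only if its union with the diagram of every substructure of a model of the theory is complete), transported to $T^*\cup\UCd$; for~(ii) it is the defining property of $T^*$ being a model completion of $T_0^*$, the remaining model-consistency being as in~(i).

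For~(iv) I would apply Theorem~\ref{uc-first-theorem} with $A=\mathcal{C}$, after passing to its fraction field and equipping it with the $\L^*$-structure it inherits inside an expansion of $M$ to a model of $T^*$; the assertion then follows once $T^*\cup\diag(\mathcal{C}\restriction\L^*)$ is shown complete, using that $T^*$ is complete (being an extension by definitions of the complete theory $T$) together with the fact that $\mathcal{C}$ is generated by $\emptyset$. The main obstacle here is not any single deep step but the bookkeeping across the four languages $\Lring(\partial)\subseteq\L^*(\partial)$ and $\Lring(\sigma)(C)\subseteq\L^*$, and the repeated passage from a $\D$-subring to its fraction field, with the attendant uniqueness of $\D$-structures there and along separable (hence $0$-smooth) extensions; the most delicate verification among these is the completeness hypothesis of Theorem~\ref{uc-first-theorem} in part~(iv).
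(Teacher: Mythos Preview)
Your proposal is correct and follows essentially the same route as the paper: establish model-consistency via Lemma~\ref{extending-D-structures} and Theorem~\ref{elext-large}, then reduce each of (i)--(iv) to an application of Theorem~\ref{uc-first-theorem} by checking the relevant completeness hypothesis on the $\L^*$-side. The one place your treatment differs is that you explicitly pass to the fraction field of a $\D$-subring before invoking Theorem~\ref{uc-first-theorem} in (iii) and (iv); the paper applies Theorem~\ref{uc-first-theorem} directly to the substructure, which is slightly loose since that theorem is stated for $\D$-\emph{fields}, so your extra step is a harmless clarification rather than a change of strategy.
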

\begin{proof}
First note that $T^* \cup \UCd$ and $T_0^* \cup \text{``$\D$-fields''}$ have the same universal theory (equivalently, that a model of one can be embedded in a model of the other). Let $M \models T^* \cup \UCd$. Since $T^*$ and $T_0^*$ have the same universal theory, there is an $\L^*$-structure $N$ such that $M \restriction_{\L^*} \leq N$. By Lemma~\ref{extending-D-structures}, we can extend the $\D$-structure on $M$ to one on $N$, so that $M \leq N$ as $\L^*(\partial)$-structures. For the other direction, let $M \models T_0^* \cup \text{``$\D$-fields''}$. Then there is some $\L^*$-structure $N \models T^*$ such that $M \restriction_{\L^*} \leq N$. Use Lemma~\ref{extending-D-structures} to extend the $\D$-structure on $M$ to one on $N$, so that $N \models T^* \cup \text{``$\D$-fields''}$ and then use Theorem~\ref{elext-large} to embed this in a model of $T^* \cup \UCd$.

To show (i), it is enough to show that $T^* \cup \UCd$ is model complete, or equivalently, that if $M \models T^* \cup \UCd$, then $T^* \cup \UCd \cup \diag(M)$ is complete. Since $T^*$ is model complete, $T^* \cup \diag(M \restriction {\L^*})$ is complete. Then $T^* \cup \UCd \cup \diag(M)$ is complete by Theorem~\ref{uc-first-theorem}.

For (ii), let $M$ be a model of $T_0^* \cup \text{``$\D$-fields''}$. We need to show that $T^* \cup \UCd \cup \diag(M)$ is complete. But $M \vDash T_0^*$, and so $T^* \cup \diag(M \restriction {\mathcal{L}^*})$ is complete since $T^*$ is a model completion of $T_0^*$. Then apply Theorem~\ref{uc-first-theorem}.

For (iii), let $M \models T^* \cup \UCd$, and let $A \leq M$ be an $\L^*(\partial)$-substructure. We need to show that $T^* \cup \UCd \cup \diag(A)$ is complete. By quantifier elimination for $T^*$, we have that $T^* \cup \diag(A \restriction {\L^*})$ is complete; the result follows by Theorem~\ref{uc-first-theorem}.

For (iv), since $T$ is complete, $T^* \cup \diag(\mathcal{C} \restriction {\L^*}) \subseteq T^*$ is complete. By Theorem~\ref{uc-first-theorem}, $T^* \cup \UCd \cup \diag(\mathcal{C})$ is complete.
\end{proof}

We can now collect the consequences of these theorems, similarly to the differential set-up in Section~8 of \cite{tressl_uniform_2005}. 

\begin{corollary}
\begin{enumerate}[\normalfont (1)]
    \item $\text{\normalfont ACFA}_{0,t} \cup \UCd$ is $\D\text{\normalfont -CF}_0$ from \cite{moosa_scanlon_2013}.
    \item If $\D$ is local, then $\text{\normalfont RCF} \cup \UCd$ is complete and the model companion of the theory of real closed $\D$-fields. It admits quantifier elimination in $\Lring(\leq)(\partial)$.
    \item If $\D$ is local, then $\text{\normalfont $p$CF}_d \cup \UCd$ is complete and the model companion of $p$-adically closed $\D$-fields of fixed rank $d$. It has quantifier elimination in $\Lring(P_n \colon n \in \N)(\partial)$ where $P_n$ is a predicate for the $n$th powers.
    \item Suppose $\D$ is local. Let $\text{\normalfont Psf}_0(C)$ be the $\Lring(C)$-theory of pseudofinite fields of characteristic $0$ with sentences saying that the polynomial $x^n + c_{n,1} x^{n-1} + \ldots + c_{n,n}$ is irreducible for each $n>1$. Recall that $\text{\normalfont Psf}_0(C)$ is model complete. We then get that $\text{\normalfont Psf}_0(C) \cup \UCd$ is the model completion of $\text{\normalfont Psf}_0(C) \cup \text{``$\D$-fields''}$.
\end{enumerate}
\end{corollary}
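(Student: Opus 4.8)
The plan is to derive each of the four items as a direct instance of Theorem~\ref{UC-consequences}, feeding in a classical model-theoretic fact about the relevant theory of fields. For (1) I take $C=\emptyset$, $T = T^* = \text{ACFA}_{0,t}$ in the language $\Lring(\sigma_1,\dots,\sigma_t)$ (with no extension by definitions), and $T_0^*$ the theory of difference fields of characteristic $0$ with $t$ not necessarily commuting endomorphisms. All hypotheses hold: $\text{ACFA}_{0,t}$ is model complete, it is the model companion of $T_0^*$ by construction, and every model of $\text{ACFA}_{0,t}$ is difference large---this follows from its geometric axioms, which entail the condition of Definition~\ref{def-difference-large} (indeed they do not even require a smooth $K$-rational point), together with the largeness of algebraically closed fields. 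Part~(i) of Theorem~\ref{UC-consequences} then gives that $\text{ACFA}_{0,t} \cup \UCd$ is the model companion of $T_0^* \cup \text{``$\D$-fields''}$, and the latter theory axiomatises precisely the class of $\D$-fields of characteristic $0$. Since \cite{moosa_scanlon_2013} shows that $\DCF$ is also the model companion of this class, and model companions are unique, $\text{ACFA}_{0,t} \cup \UCd = \DCF$.

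For (2)--(4) we have $\D$ local, so $t=0$, difference largeness reduces to largeness, and $\Lring(\sigma) = \Lring$; recall that real closed and $p$-adically closed fields are large, as they carry a nontrivial henselian valuation, and that pseudofinite fields are large, being $\PAC$. For (2), take $T = \RCF$, which is model complete in $\Lring$, and $T^* = \RCF$ in $\L^* = \Lring(\leq)$, the extension by definitions given by $x \leq y \leftrightarrow \exists z\,(y-x=z^2)$; this $T^*$ has quantifier elimination by Tarski's theorem. With $T_0^* = \RCF$ (of which the model-complete $T^*$ is trivially the model companion), part~(i) gives that $\RCF \cup \UCd$ is the model companion of $\RCF \cup \text{``$\D$-fields''}$, i.e.\ of the theory of real closed $\D$-fields; part~(iii) gives quantifier elimination in $\Lring(\leq)(\partial)$; and part~(iv) gives completeness, since $\RCF$ is complete and the $\Lring(\partial)$-substructure generated by $\emptyset$ in any model is $k$ with its unique $\D$-structure---the only $k$-algebra section of $\pi\colon\D\to k$ being $a \mapsto a\cdot 1_\D$---whose atomic diagram already follows from the axioms for $\D$-rings. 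Items~(3) and~(4) are analogous: for (3) use $T = \text{\normalfont $p$CF}_d$ (model complete in $\Lring$ and, by Prestel--Roquette, complete for each $d$) and $T^* = \text{\normalfont $p$CF}_d$ in $\Lring(P_n\colon n\in\N)$ with $P_n(x) \leftrightarrow \exists y\,(x=y^n)$, which has quantifier elimination by Macintyre's theorem, and apply parts~(i), (iii), (iv) with $T_0^* = \text{\normalfont $p$CF}_d$; for (4) take $T = T^* = \text{\normalfont Psf}_0(C)$, which being model complete is the model completion of itself, so $T_0^* = \text{\normalfont Psf}_0(C)$ is admissible and part~(ii) gives that $\text{\normalfont Psf}_0(C) \cup \UCd$ is the model completion of $\text{\normalfont Psf}_0(C) \cup \text{``$\D$-fields''}$.

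There is no real difficulty here: the work consists in correctly quoting model completeness, quantifier elimination, completeness and largeness for $\RCF$, $\text{\normalfont $p$CF}_d$, $\text{ACFA}_{0,t}$ and $\text{\normalfont Psf}_0(C)$, and in checking that in each case $T_0^* \cup \text{``$\D$-fields''}$ unwinds to the class of $\D$-fields named in the statement. The only step that is slightly more than a citation is the identification in~(1) of $\text{ACFA}_{0,t} \cup \UCd$ with the Moosa--Scanlon axiomatisation of $\DCF$, which is handled via the uniqueness of model companions; one may note there that it is immaterial whether $T_0^*$ is phrased with endomorphisms or with automorphisms, since every $\D$-field embeds into one whose associated endomorphisms are automorphisms, so the two choices of base theory have the same model companion.
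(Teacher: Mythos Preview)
Your proof is correct and follows exactly the approach the paper intends: the paper states the corollary without proof, merely prefacing it with ``We can now collect the consequences of these theorems, similarly to the differential set-up in Section~8 of \cite{tressl_uniform_2005}.'' Your argument supplies precisely those details---selecting the appropriate $T$, $T^*$, $T_0^*$ in each case and invoking the relevant parts of Theorem~\ref{UC-consequences}---and your handling of the completeness in (2) and (3) via part~(iv) (noting that the $\D$-structure on $k$ is unique so its diagram is forced) and of the endomorphism/automorphism distinction in (1) is accurate.
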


\section{Alternative characterisations of the uniform companion}\label{sec-alt-characterisations}

In this section we will describe some additional characterisations of $\UCd$ in the case $\D$ is local.  One in particular will use the notion of a D-variety, and will allow us to show that an algebraic extension of a large field which is a model of $\UCd$ is also a model of $\UCd$. In particular, the algebraic closure of such a $\D$-field will be a model of $\D\text{-CF}_0$ from \cite{moosa_scanlon_2013}.

\begin{assumption}\label{D-is-local}
From here until the end of Section~\ref{sec-pseudo-D-closed}, we assume $\D$ is local.
\end{assumption}

\begin{example}
The algebras in (1), (2), and (5) from Example~\ref{D-ring-examples} are local. We can combine these algebras using fibred products and tensor products to form more local examples. See Examples~3.4 and 3.5 of \cite{moosa_scanlon_2013}.
\end{example}

Since $\D$ is local, any $\D$-ring $R$ has only one associated homomorphism: the identity $\id_R$; the associated difference ring is then just the underlying ring. Hence for any affine $K$-variety $X$, there is only one induced morphism $\tau X \to X$, which we call $\hat{\pi}$. With respect to the coordinates described in Section~\ref{subsec-prolongation}, this is just the morphism induced by the inclusion $K[x]/I \to K[x^0, \ldots, x^l]/I'$ where $x \mapsto x^0$.

\begin{definition}
Let $(K, \partial)$ be a $\D$-ring. A D-variety over $(K, \partial)$ is a pair $(V,s)$ where $V$ is a variety over $K$ and $s \colon V \to \tau V$ is an algebraic morphism over $K$ which is a section to the canonical projection $\hat{\pi} \colon \tau V \to V$. We say that $(V,s)$ is $K$-irreducible if $V$ is $K$-irreducible, affine if $V$ is affine, etc.

Given a $\D$-field extension $(L, \delta)$ of $(K, \partial)$, the $(L, \delta)$-rational sharp points of $(V,s)$ are defined as $(V,s)^\sharp (L, \delta) = \{ a \in V(L) \colon \nabla a = s(a) \}$. 
\end{definition}

As before, we will mainly be interested in affine D-varieties. If $V$ is an affine variety, a D-variety structure on $V$ is equivalent to a $\D$-ring structure on its coordinate ring, $K[V]$. A $K$-rational sharp point is equivalent to a $\D$-ring homomorphism $K[V] \to K$. This is the natural $\D$-field analogue of D-varieties as defined for differential rings; see for example \cite{kowalski_pillay_quantifier_2006}.

We now establish some basic results about D-varieties. For the following, if $(R, \partial)$ is a $\D$-ring and $\mathfrak{a}$ is an ideal of $R$, $\mathfrak{a}$ is called a $\D$-ideal if $\partial(\mathfrak{a}) \subseteq \mathfrak{a} \otimes_k \D$, or equivalently, if $\partial_i(\mathfrak{a}) \subseteq \mathfrak{a}$ for each $i=1, \ldots, l$.

\begin{lemma}\label{minimal-primes-are-D-ideals}
Let $(R,\partial)$ be a $\D$-ring, and $\mathfrak{a} \subseteq R$ a radical $\D$-ideal. Then the minimal prime ideals above $\mathfrak{a}$ are $\D$-ideals.
\end{lemma}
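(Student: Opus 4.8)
The statement asks to show that if $(R,\partial)$ is a $\D$-ring and $\mathfrak{a}$ is a radical $\D$-ideal, then each minimal prime $\mathfrak{p}$ above $\mathfrak{a}$ is itself a $\D$-ideal. The plan is to reduce to the case $\mathfrak{a} = 0$: since $\mathfrak{a}$ is a $\D$-ideal, the quotient $R/\mathfrak{a}$ inherits a $\D$-ring structure, and the minimal primes above $\mathfrak{a}$ correspond to the minimal primes of $R/\mathfrak{a}$, which is reduced. So it suffices to prove: if $(R,\partial)$ is a reduced $\D$-ring, every minimal prime of $R$ is a $\D$-ideal. I would then use the key fact, provable by the usual associated-homomorphism bookkeeping, that for each $i$ the operator $\partial_i$ satisfies a product rule of the form $\partial_k(rs) = \sum_{i,j} a_{ijk}\partial_i(r)\partial_j(s)$ (with $\partial_0 = \id$), so in particular $\partial_k(rs)$ lies in the ideal generated by $r$ and $s$ together with their images under the various $\partial_i$'s — the crucial point being that every term in the sum contains either a factor $\partial_i(r)$ or a factor $\partial_j(s)$.

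**The main argument.** Let $\mathfrak{p}$ be a minimal prime of the reduced ring $R$, and let $\mathfrak{q}_1,\dots,\mathfrak{q}_m$ be the other minimal primes, so that $\mathfrak{p} \cap \mathfrak{q}_1 \cap \cdots \cap \mathfrak{q}_m = (0)$. Pick $s \in \mathfrak{q}_1 \cap \cdots \cap \mathfrak{q}_m$ with $s \notin \mathfrak{p}$ (possible by prime avoidance, since $\mathfrak{p}$ does not contain the intersection of the other minimal primes — otherwise it would contain one of them, contradicting minimality; if $m=0$ then $R$ is a domain and there is nothing to prove). Now take any $r \in \mathfrak{p}$; then $rs \in \mathfrak{p} \cap \mathfrak{q}_1 \cap \cdots \cap \mathfrak{q}_m = (0)$, so $rs = 0$. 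I would like to conclude $\partial_k(r) \in \mathfrak{p}$ for each $k$. Applying $\partial_k$ to $rs = 0$ and using the product rule gives $0 = \sum_{i,j} a_{ijk}\,\partial_i(r)\,\partial_j(s)$. I then want to iterate: multiply through by suitable elements of $\bigcap \mathfrak{q}_l$ and use that products of elements of $\mathfrak{p}$ with elements outside $\mathfrak{p}$ vanish, to peel off the terms one at a time and deduce $\partial_k(r)\cdot(\text{something not in }\mathfrak{p}) = 0$, hence $\partial_k(r) \in \mathfrak{p}$.

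**The technical heart.** The cleanest way to run the induction is probably the following. Work by induction on the length $n$ of a word $\xi = \partial_{i_1}\cdots\partial_{i_n}$ (including $\xi = \id$): I claim that for $r \in \mathfrak{p}$ one has $\xi(r) \in \mathfrak{p}$. The base case $n=0$ is the hypothesis $r \in \mathfrak{p}$. For the inductive step, note that $r s = 0$ where $s$ is as above; applying any word $\eta$ of operators to $rs = 0$ and expanding via iterated product rules, every resulting monomial is a $k$-linear combination of terms $\xi_1(r)\,\xi_2(s)$ where $\xi_1,\xi_2$ range over words with $\mathrm{length}(\xi_1) + \mathrm{length}(\xi_2) \le \mathrm{length}(\eta)$, and $\xi_1 = \id$ only in the term $r\,\eta(s)$ — actually one must be a bit careful here, but the point is that by a secondary induction on length one isolates the "top" term. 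Choosing $s$ cleverly (for instance, replacing $s$ by a high power, or using that $\mathfrak{q}_l$ are themselves not assumed $\D$-ideals so one works with products) lets one arrange that all the "lower" terms $\xi_1(r)\xi_2(s)$ with $\xi_2 \ne \id$-contributions already land in $\mathfrak{p}$ by the inductive hypothesis applied to $r$, leaving $\xi(r) \cdot (\text{unit-like factor outside }\mathfrak{p}) \in \mathfrak{p}$, whence $\xi(r) \in \mathfrak{p}$. Since $\partial_k(\mathfrak{p}) \subseteq \mathfrak{p}$ for each $k$ is exactly the statement that $\mathfrak{p}$ is a $\D$-ideal, this finishes the argument, and undoing the reduction gives the result for general $\mathfrak{a}$.

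**Expected obstacle.** The subtle point — and the step I would spend the most care on — is the combinatorial bookkeeping of iterated product rules: making precise the claim that when $\partial_k$ (or a word $\eta$) is applied to a product $rs$, one gets a sum in which exactly one monomial fails to contain a "derivative" of $s$, and organizing the double induction (on word length, and within that, peeling off terms) so that the inductive hypothesis genuinely applies to every other monomial. A clean alternative that sidesteps much of this is to note that the $\D$-structure on $R$ is equivalent to a ring map $\partial\colon R \to R\otimes_k\D$, that $\mathfrak{p}$ being a $\D$-ideal means $\partial(\mathfrak{p}) \subseteq \mathfrak{p}\otimes_k\D$, and that $\partial^{-1}(\mathfrak{p}\otimes_k\D)$ is a $\D$-ideal of $R$ contained in $\mathfrak{p}$ — then argue, using that $R/\mathfrak{a}$ is reduced and a localization-type argument at the generic point of $\Spec(R/\mathfrak{p})$ (where $0$-étale extensions and the uniqueness in Lemma~\ref{extending-D-structures} apply), that this $\D$-ideal must equal $\mathfrak{p}$. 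I would present whichever of these two routes turns out shorter, but I expect the localization argument to be the slicker one.
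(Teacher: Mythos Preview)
Your primary combinatorial approach has two gaps. The minor one: you assume finitely many minimal primes, which is not guaranteed without a Noetherian hypothesis. This is repairable---what you actually need is that for each $r \in \mathfrak{p}$ there is some $s \notin \mathfrak{p}$ with $rs = 0$, and this follows since $R_\mathfrak{p}$ is reduced with unique prime $\mathfrak{p}R_\mathfrak{p}$, hence $\mathfrak{p}R_\mathfrak{p} = 0$. The more serious one: your induction on word length is mis-organised, and the claim that ``exactly one monomial fails to contain a derivative of $s$'' is not correct for general $\D$. The induction that actually works (using that $\D$ is local, so its maximal ideal $\mathfrak{m}$ is nilpotent) is on the weight of $k$ relative to the filtration $\D \supset \mathfrak{m} \supset \mathfrak{m}^2 \supset \cdots$: choosing a basis with $\varepsilon_0 = 1$ and each $\varepsilon_i \in \mathfrak{m}^{w(i)}$, the product rule becomes $\partial_k(rs) = r\partial_k(s) + \partial_k(r)s + \sum_{i,j \geq 1} a_{ijk}\partial_i(r)\partial_j(s)$ where the surviving cross-terms have $w(i) < w(k)$, so the induction closes. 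You never articulate this filtration, and without it the bookkeeping you flag as the ``expected obstacle'' does not resolve.

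Your localization alternative is essentially the paper's route, but your specific formulation is flawed: the claim that $\partial^{-1}(\mathfrak{p} \otimes_k \D)$ is a $\D$-ideal is unjustified and in fact circular---unwinding it, you are asserting that $\partial_i(r) \in \mathfrak{p}$ for all $i$ implies $\partial_i\partial_j(r) \in \mathfrak{p}$ for all $i,j$, which is exactly the statement that $\mathfrak{p}$ is a $\D$-ideal. The paper's argument avoids this entirely: localize at $\mathfrak{p}$; then $\mathfrak{a}R_\mathfrak{p}$ is radical and $\mathfrak{p}R_\mathfrak{p}$ is both minimal above it and maximal, hence the only prime above it, so $\mathfrak{a}R_\mathfrak{p} = \sqrt{\mathfrak{a}R_\mathfrak{p}} = \mathfrak{p}R_\mathfrak{p}$. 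Since localizations are $0$-\'etale, $\partial$ extends uniquely to $R_\mathfrak{p}$ via $\partial(a/b) = \partial(a)\partial(b)^{-1}$, and $\mathfrak{a}R_\mathfrak{p}$ is visibly a $\D$-ideal; therefore so is $\mathfrak{p}R_\mathfrak{p}$, and contracting to $R$ gives that $\mathfrak{p}$ is a $\D$-ideal. No combinatorics with the product rule is needed.
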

\begin{proof}
Let $\mathfrak{p}$ be a minimal prime ideal above $\mathfrak{a}$, and consider the localisation $R_\mathfrak{p}$. Since $\mathfrak{a}$ is radical, so is $\mathfrak{a} R_\mathfrak{p}$ (see Proposition~3.11 of \cite{atiyah_introduction_2018}). Suppose $\mathfrak{q} \subseteq \mathfrak{p} R_\mathfrak{p}$ is a prime ideal of $R_\mathfrak{p}$ that also lies above $\mathfrak{a} R_\mathfrak{p}$. Then by part iv) of the same proposition, we must have $\mathfrak{q} = \mathfrak{p} R_\mathfrak{p}$, and hence $\mathfrak{p} R_\mathfrak{p}$ is a minimal prime above $\mathfrak{a} R_\mathfrak{p}$. It is also the unique maximal ideal of $R_\mathfrak{p}$, and hence is the only prime ideal lying above $\mathfrak{a} R_\mathfrak{p}$. Then $\mathfrak{a} R_\mathfrak{p} = \sqrt{\mathfrak{a} R_\mathfrak{p}} = \mathfrak{p} R_\mathfrak{p}$ (the radical of an ideal is the intersection of the prime ideals lying above it).

By Remark~\ref{remark-ext-D-str} we know that $\partial$ extends uniquely to a $\D$-structure on $R_{\mathfrak{p}}$ with $\partial(\frac{a}{b}) = \partial(a)\partial(b)^{-1}$. Since $\mathfrak{a}$ is a $\D$-ideal it is clear that $\mathfrak{a} R_\mathfrak{p}$ is also a $\D$-ideal.

Then $\mathfrak{p} R_\mathfrak{p}$ is a $\D$-ideal, and hence its contraction to $R$, $\mathfrak{p}$, is also a $\D$-ideal.
\end{proof}

\begin{lemma}\label{open-and-irreducible-D-varieties}
Let $(V,s)$ be an affine D-variety over $(K, \partial)$. Then 
\begin{enumerate}
    \item[\normalfont a)] any nonempty Zariski-open $U \subseteq V$ defined over $K$ is a D-subvariety of $(V,s)$;
    \item[\normalfont b)] any $K$-irreducible component of $V$ is a D-subvariety of $(V,s)$.
\end{enumerate}
\end{lemma}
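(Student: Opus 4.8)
The plan is to pass everything to the coordinate ring $R := K[V]$. By the remark following the definition of a D-variety, a D-variety structure on the affine variety $V$ is the same thing as a $\D$-ring structure $\partial_R \colon R \to R \otimes_k \D$ extending $\partial$ on $K$; moreover, under this dictionary between affine D-varieties and $\D$-rings, a closed subvariety $Z = \Spec(R/\mathfrak{a})$ of $V$ (resp.\ a principal open $D(f) = \Spec R_f$) is a D-subvariety of $(V,s)$ exactly when $\partial_R$ descends to a $\D$-ring structure on $R/\mathfrak{a}$ (resp.\ on $R_f$) making the quotient (resp.\ localisation) map a $\D$-ring homomorphism. So part b) reduces to showing that the coordinate rings of the $K$-irreducible components of $V$ inherit such structures, and part a) reduces to the same statement for localisations.

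For b): since $V$ is a variety, $R$ is reduced, so the zero ideal $(0) \subseteq R$ is radical, and it is trivially a $\D$-ideal. The $K$-irreducible components of $V$ are the $V(\mathfrak{p})$ with $\mathfrak{p}$ a minimal prime of $R$; by Lemma~\ref{minimal-primes-are-D-ideals} applied to the radical $\D$-ideal $(0)$, each such $\mathfrak{p}$ is a $\D$-ideal. Since $\D$ is free over $k$, $\mathfrak{p} \otimes_k \D$ is the kernel of $R \otimes_k \D \to (R/\mathfrak{p}) \otimes_k \D$, so $\partial_R(\mathfrak{p}) \subseteq \mathfrak{p} \otimes_k \D$ forces $\partial_R$ to descend to a $\D$-ring structure on $R/\mathfrak{p} = K[V(\mathfrak{p})]$, with the quotient map a $\D$-ring homomorphism. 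Hence $V(\mathfrak{p})$, with the section induced by this structure, is a D-subvariety of $(V,s)$.

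For a): if $U = D(f)$ is a principal open, then $R_f$ is a localisation of $R$, so by Remark~\ref{remark-ext-D-str} (localisations are $0$-\'{e}tale) together with Lemma~\ref{extending-D-structures} --- or directly, since $\partial_R(a/b) = \partial_R(a)\partial_R(b)^{-1}$ is forced --- it carries a $\D$-ring structure with $R \to R_f$ a $\D$-ring homomorphism, so $D(f)$ is a D-subvariety. For a general nonempty Zariski-open $U \subseteq V$ over $K$, write $U = \bigcup_i D(f_i)$. Because $\D$ is local with residue field $k$, the map $\Spec \D \to \Spec k$ is a universal homeomorphism, so (as recorded in the remark after Section~\ref{subsec-prolongation}, via the corrigendum to \cite{moosa_scanlon_2010}) the prolongation commutes with open immersions; hence the $\tau D(f_i)$ glue inside $\tau V$ to $\tau U = \hat{\pi}^{-1}(U)$. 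Since $\hat{\pi} \circ s = \id_V$, we have $s(U) \subseteq \hat{\pi}^{-1}(U) = \tau U$, so $s$ restricts to a section $s|_U \colon U \to \tau U$ of $\hat{\pi}|_{\tau U}$, exhibiting $(U, s|_U)$ as a D-subvariety of $(V,s)$. Alternatively one glues the D-variety structures on the $D(f_i)$ directly, these being forced to agree on overlaps by uniqueness of the extension to a localisation in Lemma~\ref{extending-D-structures}.

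I expect the only genuinely delicate point to be the non-affine case of a), which needs the prolongation to commute with open immersions --- precisely where the standing assumption that $\D$ has residue field $k$ is used. Everything else is a routine translation between $\D$-ideals and $\D$-localisations of $R$ and D-subvarieties of $(V,s)$, carried by Lemma~\ref{minimal-primes-are-D-ideals} for b) and Remark~\ref{remark-ext-D-str} for a).
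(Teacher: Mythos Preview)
Your proof is correct and follows essentially the same approach as the paper: part b) is exactly the paper's argument (the paper just writes ``by Lemma~\ref{minimal-primes-are-D-ideals}'' without spelling out the translation to quotient rings), and for part a) your ``alternatively'' argument---extend to principal opens via the $0$-\'{e}tale property of localisations, then glue using uniqueness on overlaps---is precisely what the paper does. Your first argument for general opens, via $s(U) \subseteq \hat{\pi}^{-1}(U) = \tau U$, is a slight variation that the paper does not give, but it is not a genuinely different route.
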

\begin{proof}
a) Let $K[V]$ be the coordinate ring of $V$. Then $s$ corresponds to $\partial_s \colon K[V] \to K[V] \otimes_k \D$. Let $U$ be a basic open subset of $V$ given by the nonvanishing of some $f$. By Remark~\ref{remark-ext-D-str} we then get that $\partial_s$ extends uniquely to $K[V]_f \to K[V]_f \otimes_k \D$. That is, $s$ restricts to $U \to \tau U$. Now if $U = \bigcup_{i \in I} U_i$ is a union of basic open subsets, $s$ restricts to $U_i \to \tau U_i \subseteq \tau U$, and these restrictions agree on $U_i \cap U_j$ since this is also a basic open. Glueing the morphisms $U_i \to \tau U$ gives a morphism $U \to \tau U$ which is a restriction of $s$.

b) by Lemma~\ref{minimal-primes-are-D-ideals}.
\end{proof}

\begin{theorem}\label{D-large-equivalences}
Suppose $(K,\partial)$ is a $\D$-field and $K$ is large. Then the following are equivalent:
\begin{enumerate}
    \item[\normalfont (1)] $K \vDash \UCd$;
    \item[\normalfont (2)] whenever $(V,s)$ is an affine, $K$-irreducible D-variety, if $V$ has a smooth $K$-rational point, then the set of $K$-rational sharp points of $(V,s)$ is Zariski dense in $V$;
    \item[\normalfont (3)] whenever $(V,s)$ is an affine, $K$-irreducible D-variety, if $V$ has a smooth $K$-rational point, then $(V,s)$ has a $K$-rational sharp point;
    \item[\normalfont (4)] whenever $(V,s)$ is a smooth, affine, $K$-irreducible D-variety, if $V$ has a $K$-rational point, then $(V,s)$ has a $K$-rational sharp point;
    \item[\normalfont (5)] whenever $(L,\delta)$ is a $\D$-field extension of $(K,\partial)$ such that $K$ is existentially closed in $L$ as a field, then $(K,\partial)$ is existentially closed in $(L, \delta)$ as a $\D$-field.
\end{enumerate} 
\end{theorem}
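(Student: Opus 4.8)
We would prove the five conditions equivalent by the cycle $(1)\Rightarrow(2)\Rightarrow(3)\Rightarrow(4)\Rightarrow(1)$ together with $(1)\Leftrightarrow(5)$. Since $\D$ is local there is only the associated homomorphism $\id$, so in the definition of $\UCd$ the condition on $Y$ reduces to: $\hat\pi(Y)$ is Zariski dense in $X$; we use this throughout. The implications $(2)\Rightarrow(3)$ and $(3)\Rightarrow(4)$ are immediate: a Zariski-dense set of sharp points is nonempty as $V\neq\emptyset$, and every point of a smooth variety is smooth. For $(1)\Rightarrow(2)$, let $(V,s)$ be an affine, $K$-irreducible D-variety with a smooth $K$-point $v$. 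As a section of the affine (hence separated) projection $\hat\pi\colon\tau V\to V$, $s$ is a closed immersion, so $Y:=s(V)$ is a $K$-irreducible affine closed subvariety of $\tau V$, isomorphic to $V$ via $\hat\pi$, with $\hat\pi(Y)=V$ Zariski dense in $V$ and a smooth $K$-point $s(v)$. Given a nonempty Zariski-open $W\subseteq V$ over $K$, applying $\UCd$ with $X:=V$ and $U:=s(W)$ yields $a\in V(K)$ with $\nabla a\in s(W)(K)$; applying $\hat\pi$ shows $\nabla a=s(a)$, so $a$ is a sharp point, and $a=\hat\pi(\nabla a)\in W$. Hence the $K$-rational sharp points are Zariski dense in $V$.

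The substantive implication is $(4)\Rightarrow(1)$. Let $X$, $Y\subseteq\tau X$ and $U$ be as in the definition of $\UCd$, and pick a $K$-generic point $b$ of $Y$ in some field extension; then $a^*:=\hat\pi(b)$ is $K$-generic in $X$. Since $b\in\tau X$, exactly as in the proof of Theorem~\ref{elext-large} we extend $\partial$ to $K(a^*)$ by $\partial(a^*)=b$ and then, by Lemma~\ref{extending-D-structures} (in characteristic $0$ every field extension is separable, hence $0$-smooth), to a $\D$-field structure $\delta$ on $K(b)$ with $\nabla_\delta(a^*)=b$. With respect to $\delta$, the map $y\mapsto\nabla_\delta(y)$ is a rational section $s\colon Y\dashrightarrow\tau Y$ over $K$, defined on a dense open $Y^0\subseteq Y$ (as $\nabla_\delta(b)$ is rational over the generic point $b$, and $\hat\pi\circ\nabla_\delta=\id$). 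The crucial point is that $s$ retains the embedding $Y\subseteq\tau X$: writing points of $\tau X$ in coordinates $(\bar u^0,\dots,\bar u^l)$, the component of $s$ in the ``$\delta_j\bar u^0$'' slot of $\tau(\tau X)$ equals the coordinate function $\bar u^j$ on $Y$ for $j=1,\dots,l$, because $\delta_j(a^*)=b^j$. Now, using largeness of $K$—so $Y(K)$ is Zariski dense in $Y$, as $Y$ has a smooth $K$-point—together with generic smoothness in characteristic $0$, choose a basic affine open $W$ over $K$ with $W\subseteq Y^0\cap Y^{\mathrm{sm}}\cap U$ and a $K$-point. By Lemma~\ref{open-and-irreducible-D-varieties}, $(W,s|_W)$ is a smooth, affine, $K$-irreducible D-variety with a $K$-point, so $(4)$ produces a $K$-rational sharp point $a'$. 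Unwinding $\nabla_\partial(a')=s(a')$ via the highlighted identity forces $a'=\nabla_\partial(\hat\pi(a'))$; putting $a:=\hat\pi(a')\in X(K)$ we get $\nabla a=a'\in W(K)\subseteq U(K)$, which is what $\UCd$ requires.

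It remains to handle $(1)\Leftrightarrow(5)$. For $(1)\Rightarrow(5)$, let $(L,\delta)\supseteq(K,\partial)$ with $K$ existentially closed in $L$ as a field, and suppose $L$ satisfies an existential $\Lring(\partial)$-sentence over $K$. Following the reduction in the proof of Theorem~\ref{same-ex-theory} (replace inequations by equations; a witnessing tuple $\bar c_0$ and Zariski-closed $Z$ make the sentence equivalent to ``$\nabla_r(\cdot)\in Z$'', with $r$ minimal), set $c:=\nabla_{r-1}(\bar c_0)$, $X:=\operatorname{loc}(c/K)$, $Y:=\operatorname{loc}(\nabla c/K)\subseteq\tau X$. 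Then $\hat\pi(Y)$ is Zariski dense in $X$, and since $K$ is existentially closed in $L$ as a field and $Y$ is generically smooth in characteristic $0$, the existential statement ``$Y$ has a $K$-point on its smooth locus'' descends from $L$ to $K$, so $Y$ has a smooth $K$-point. Applying $(1)$ with $U:=Y$ yields $a\in X(K)$ with $\nabla a\in Y(K)$, and the algebraic-relations argument concluding the proof of Theorem~\ref{same-ex-theory} shows the relevant subtuple $a_0$ of $a$ satisfies the sentence in $K$. For $(5)\Rightarrow(1)$, given $X$, $Y$, $U$ as in the definition of $\UCd$, build $(K(b),\delta)\supseteq(K,\partial)$ with $\nabla_\delta(\hat\pi(b))=b$ for a $K$-generic $b\in Y$, as in the previous paragraph. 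Since $Y$ is $K$-irreducible with a smooth $K$-point and $K$ is large, $K$ is existentially closed in the function field $K(Y)=K(b)$ as a field: an existential $\Lring$-formula over $K$ holding in $K(Y)$ has a witness by rational functions on $Y$, which on a suitable dense open $K$-subvariety of $Y$ specialises at any $K$-point to a $K$-solution, and such a $K$-point exists because $Y(K)$ is Zariski dense by largeness. The sentence $\exists a\,(\nabla a\in U)$ is existential in $\Lring(\partial)$ over $K$ and is witnessed by $\hat\pi(b)$ in $(K(b),\delta)$ (as $b$, being generic, lies in $U$); by $(5)$ it is witnessed in $K$, which is precisely $(1)$.

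The hard part is $(4)\Rightarrow(1)$: the general form of the axiom allows arbitrary $Y\subseteq\tau X$ with $\hat\pi|_Y$ merely dominant, and such a $Y$ need not be (an open subset of) the graph of a section $X\to\tau X$, so $(4)$ does not apply to it directly. The device above—equipping the function field $K(Y)$ itself with a $\D$-field structure, which is possible precisely because $b\in\tau X$, so that $Y$ acquires a rational section to its own prolongation—circumvents this, but one must verify that this section ``remembers'' the coordinates of $Y$ inside $\tau X$, which is exactly what makes a sharp $K$-point of a smooth affine open of $Y$ descend to the point of $X$ that $\UCd$ demands. Largeness of $K$ (to produce a $K$-point on the relevant open) and characteristic $0$ (for $0$-smoothness in Lemma~\ref{extending-D-structures} and for generic smoothness of $Y$) are used essentially here, and likewise in $(5)\Rightarrow(1)$.
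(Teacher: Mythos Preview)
Your proof is correct. The core arguments---putting a $\D$-structure on $K(b)$ via Lemma~\ref{extending-D-structures}, reading off a rational section $s$ to $\tau Y$ on a dense open, and checking that a sharp point of this D-variety has $\nabla(\hat\pi(a'))=a'$---match the paper's $(3)\Rightarrow(1)$ essentially verbatim; you simply intersect with $Y^{\mathrm{sm}}$ and invoke $(4)$ directly, whereas the paper runs $(4)\Rightarrow(3)$ separately (via the smooth locus) and then $(3)\Rightarrow(1)$. Your $(1)\Rightarrow(2)$ also coincides with the paper's.

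Where you genuinely diverge is in $(1)\Leftrightarrow(5)$. The paper does not re-open the machinery: for $(1)\Rightarrow(5)$ it passes to an elementary field extension $M\succ K$ containing $L$, enlarges to a model $N$ of $\UCd$ using Theorem~\ref{elext-large}, and then cites Theorem~\ref{same-ex-theory} to transfer existential $\D$-theories; for $(5)\Rightarrow(1)$ it again invokes Theorem~\ref{elext-large} to produce some $(L,\delta)\models\UCd$ with $K\prec L$ as fields, applies $(5)$, and uses that $\UCd$ is inductive (hence $\forall\exists$) to pull the axioms back to $K$. Your route instead unpacks the relevant pieces of those theorems in place: for $(1)\Rightarrow(5)$ you redo the locus-and-$\nabla_r$ reduction from Theorem~\ref{same-ex-theory} over $K$ and use field-existential-closure to get the smooth $K$-point on $Y$; for $(5)\Rightarrow(1)$ you build the single witnessing extension $K(b)$ and argue directly that largeness makes $K$ existentially closed in $K(Y)$ as a field. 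Both approaches are valid. The paper's is shorter given that Theorems~\ref{same-ex-theory} and~\ref{elext-large} are already available; yours is more self-contained and makes the role of largeness (Zariski density of $K$-points producing the required specialisation) explicit at each step.
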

\begin{proof}
(1) $\implies$ (2): Suppose $(K,\partial) \vDash \UCd$ and let $(V,s)$ be a $K$-irreducible D-variety with a smooth $K$-rational point. Let $X = V$ and $Y=s(V)$. Note that $X$ and $Y$ are isomorphic. Then $Y$ has a smooth $K$-rational point, $Y \subseteq \tau X$, and $\hat{\pi} \colon Y \to X$ is an isomorphism. So, since $K \models \UCd$, $Y$ has a Zariski dense set of $K$-rational points of the form $\nabla(a)$ for $a \in X(K)$, and hence for each such $a \in X(K)$, $\nabla(a) = s(a)$.

(2) $\implies$ (3) is clear.

(3) $\implies$ (1): Let $X$, $Y$, $U$ be as in the statement of $\UCd$. Let $b \in L \geq K$ be a $K$-generic point of $Y$, so that $a = \hat{\pi}(b) \in L$ is $K$-generic in $X$ by dominance. Since $b \in \tau X(K(b))$, let $b' \in K(b) \otimes_k \D$ be the point corresponding to $b$ under the correspondence $\tau X (K(b)) \leftrightarrow X(K(b) \otimes_k \D)$. Then $P^\partial(b') = 0$ for all $P \in I(X/K)$, and so $\partial$ extends to a homomorphism $\partial \colon K[a] \to K(b) \otimes_k \D$ with $\partial(a) = b'$. Extend this to a $\D$-ring structure $\partial \colon K(b) \to K(b) \otimes_k \D$ using Lemma~\ref{extending-D-structures}. In this $\D$-ring structure, $\nabla(a) = b$. Now each $\partial_i(b_j) \in K(b)$ so $\partial_i(b_j) = \frac{P_{ij}(b)}{Q_{ij}(b)}$ for some polynomials $P_{ij}, Q_{ij} \in K[x]$. Let $Q \in K[x]$ be the product of all $Q_{ij}$. Note that $\partial$ restricts to $K[b] \to K[b]_{Q(b)} \otimes_k \D$. Again by Lemma~\ref{extending-D-structures}, we must have that $\partial$ extends to $K[b]_{Q(b)} \to K[b]_{Q(b)} \otimes_k \D$. Let $U'$ be the open subset of $Y$ corresponding to $Q(x)$. This extension of $\partial$ gives a D-variety structure $s \colon U' \to \tau U'$.

Since $K$ is large and $V$ has a smooth $K$-point, $U \cap U'$ has a smooth $K$-point. By Lemma~\ref{open-and-irreducible-D-varieties}, $(U \cap U',s|_{U \cap U'})$ is a $K$-irreducible D-variety with a smooth $K$-rational point. By (3) there is $(c, d_1, \ldots, d_{l}) \in (U \cap U')(K)$ with $\nabla(c, d_1, \ldots, d_{l}) = s(c, d_1, \ldots, d_{l})$. Then $c \in X(K)$ with $\nabla(c) = (c, d_1, \ldots, d_{l}) \in U(K)$.

(3) $\implies$ (4) is clear.

(4) $\implies$ (3): Let $(V,s)$ be a D-variety over $K$ with $V$ $K$-irreducible and $a \in V(K)$ a smooth $K$-rational point. Let $W \subseteq V$ be the smooth locus of $V$. Then $W$ is a smooth, $K$-irreducible D-subvariety of $V$. The point $a$ is a $K$-rational point of $W$ and so by (4), $W$ has a $K$-rational sharp point. Then $V$ has a $K$-rational sharp point.

(1) $\implies$ (5): Let $(L, \delta)$ be a $\D$-field extension of $(K, \partial) \models \UCd$ such that $K$ is existentially closed in $L$ as a field. Then there is a field extension $L \leq M$ such that $M$ is an elementary extension of $K$ as a field; note that $M$ is then also a large field. Extend the $\D$-field structure on $L$ to one on $M$, and use Theorem~\ref{elext-large} to find a $\D$-field extension $(N, d) \models \UCd$ such that $K \prec M \prec N$ as fields. This last fact implies that $K$ and $N$ have the same existential theory as fields over $K$. So by Theorem~\ref{same-ex-theory}, they have the same existential theory as $\D$-fields over $(K, \partial)$---recall that since $\D$ is local, the associated difference field is just the underlying field. Then $(K, \partial)$ is existentially closed in $(N, d)$, and hence in $(L, \delta)$.

(5) $\implies$ (1): Assume $(K, \partial)$ has property (5). By Theorem~\ref{elext-large}, there is $(L, \delta) \models \UCd$ extending it such that $K \prec L$ as fields. Then $K$ is existentially closed in $L$ as fields, and so $(K, \partial)$ is existentially closed in $(L, \delta)$ as $\D$-fields by (5). Since $\UCd$ is inductive, we must also have $(K, \partial) \models \UCd$.
\end{proof}

We will now show that algebraic extensions of large models of $\UCd$ are again large and models of $\UCd$. Similar to the differential case (Theorem~5.11 of \cite{leon_sanchez_differentially_2020}), this will rely on the $\D$-Weil descent, established in \cite{mohamed-2022}.

We recall some of the properties of the $\D$-Weil descent. Let $(L,\delta)/(K,\partial)$ be an extension of $\D$-fields where $L/K$ is a finite field extension. Let $(V, s)$ be an affine D-variety over $(L, \delta)$; as mentioned above, this is equivalent to a $\D$-ring structure, $\delta^s$, on the coordinate ring, $L[V]$, extending $\delta$ on $L$. The classical Weil descent of $V$, $V^W$, is a $K$-variety such that there is a natural bijection
\[
V(L) \leftrightarrow V^W(K).
\]

Stated algebraically, this is equivalent to the natural bijection
\[
\text{Hom}_L(L[V], L) \leftrightarrow \text{Hom}_K(K[V^W], K).
\]

In \cite{mohamed-2022} it is shown that there is a unique $\D$-ring structure, $\partial^s$, on $K[V^W]$ extending $\partial$ on $K$ such that the above natural bijection restricts to a natural bijection
\[
\text{Hom}_{(L,\delta)}((L[V], \delta^s), (L,\delta)) \leftrightarrow \text{Hom}_{(K,\partial)}((K[V^W], \partial^s), (K,\partial)).
\]

The $\D$-ring structure $\partial^s$ corresponds to $s^W \colon V^W \to \tau(V^W)$ and makes $(V^W, s^W)$ into a D-variety over $(K, \partial)$. As mentioned above, a $\D$-ring homomorphism $L[V] \to L$ corresponds to an $L$-rational sharp point of $(V,s)$. Geometrically then, we have that the first natural bijection restricts to the natural bijection
\[
(V,s)^\sharp(L, \delta) \leftrightarrow (V^W, s^W)^\sharp(K, \partial).
\]

\begin{theorem}
Let $(L,\delta) / (K,\partial)$ be an algebraic extension of $\D$-fields where $(K,\partial) \models \UCd$ and $K$ is a large field. Then $(L,\delta) \models \UCd$ and $L$ is large.
\end{theorem}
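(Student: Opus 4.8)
The plan is to dispose of the largeness of $L$ at once---algebraic extensions of large fields are large---and then to verify $(L,\delta)\models\UCd$ by checking criterion (4) of Theorem~\ref{D-large-equivalences}, which is available since $L$ is large. So fix a smooth, affine, $L$-irreducible D-variety $(V,s)$ over $(L,\delta)$ together with an $L$-rational point $p$; the task is to produce an $L$-rational sharp point of $(V,s)$.

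First I would reduce to the case $L/K$ finite. The variety $V$, the section $s$ (equivalently, the $\D$-ring structure on $L[V]$, which is pinned down by the finitely many images of the coordinate functions under $\partial_1,\dots,\partial_l$), and the coordinates of $p$ involve only finitely many elements of $L$; let $L_0$ be the subfield of $L$ they generate over $K$. Since $L/K$ is algebraic, $L_0/K$ is finite, and by uniqueness of the induced $\D$-structure on algebraic extensions (Remark~\ref{remark-ext-D-str}) the restriction $\partial_0:=\delta|_{L_0}$ makes $(L_0,\partial_0)\leq(L,\delta)$ a $\D$-subfield. All the data then descends: there is a (not necessarily $L_0$-irreducible) smooth, affine D-variety $(V_0,s_0)$ over $(L_0,\partial_0)$ with a point $p_0\in V_0(L_0)$ whose base change to $L$ is $(V,s,p)$---smoothness descends because $\Spec L\to\Spec L_0$ is faithfully flat. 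Since $\nabla$ commutes with base change, it suffices to find an $L_0$-rational sharp point of $(V_0,s_0)$.

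Next I would apply the $\D$-Weil descent of \cite{mohamed-2022} along the finite (hence, in characteristic $0$, separable) extension $L_0/K$, obtaining a D-variety $(V_0^W,s_0^W)$ over $(K,\partial)$ together with the natural bijection $V_0(L_0)\leftrightarrow V_0^W(K)$ restricting to $(V_0,s_0)^\sharp(L_0,\partial_0)\leftrightarrow(V_0^W,s_0^W)^\sharp(K,\partial)$. Here $V_0^W$ is affine, and since Weil restriction along a finite separable extension preserves smoothness, $V_0^W$ is smooth over $K$; it carries the $K$-rational point $p_0^W$ corresponding to $p_0$. Being smooth over $K$, $V_0^W$ is a disjoint union of its $K$-irreducible components, each of which is open---hence affine---and smooth; let $W$ be the one containing $p_0^W$. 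By Lemma~\ref{open-and-irreducible-D-varieties}(b), $W$ inherits the structure of a D-subvariety $(W,s_0^W|_W)$ of $(V_0^W,s_0^W)$, so $(W,s_0^W|_W)$ is a smooth, affine, $K$-irreducible D-variety over $(K,\partial)$ with the $K$-rational point $p_0^W$. Since $(K,\partial)\models\UCd$ and $K$ is large, criterion (4) of Theorem~\ref{D-large-equivalences} yields a $K$-rational sharp point $q^W$ of $(W,s_0^W|_W)$, which is a fortiori a $K$-rational sharp point of $(V_0^W,s_0^W)$. Transporting $q^W$ through the sharp-point bijection gives an $L_0$-rational sharp point $q_0$ of $(V_0,s_0)$, and its base change to $L$ is the desired $L$-rational sharp point of $(V,s)$.

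The main obstacle is that the $\D$-Weil descent of an irreducible D-variety need not remain irreducible, so $(V_0^W,s_0^W)$ cannot be fed directly into $\UCd$; the remedy is precisely the passage to the irreducible component $W$ through $p_0^W$, together with the three facts that make this harmless: $W$ inherits a D-variety structure (Lemma~\ref{open-and-irreducible-D-varieties}(b), ultimately Lemma~\ref{minimal-primes-are-D-ideals}), $W$ is still smooth and affine (it is a clopen subscheme of the smooth affine $V_0^W$), and it still contains the relevant rational point. A secondary point requiring care is the bookkeeping of the reduction to a finite subextension---in particular, that $\partial_0=\delta|_{L_0}$ is genuinely the unique $\D$-structure making $(L_0,\partial_0)$ a $\D$-subfield, and that descent and base change are compatible with $\tau$, $\nabla$, smoothness, and rational points.
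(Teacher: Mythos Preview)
Your proof is correct and follows essentially the same route as the paper: reduce to a finite subextension, apply the $\D$-Weil descent, pass to the $K$-irreducible component of $V^W$ through the distinguished $K$-point (using Lemma~\ref{open-and-irreducible-D-varieties}(b)), invoke criterion~(4) of Theorem~\ref{D-large-equivalences} for $(K,\partial)$, and transport the resulting sharp point back through the Weil-descent bijection. The paper handles the finite case first and the reduction step last, whereas you front-load the reduction, but the content is the same.
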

\begin{proof}
Consider first the case when $L/K$ is a finite extension. We verify condition (4) of Theorem~\ref{D-large-equivalences}. Let $(V,s)$ be a smooth, $L$-irreducible D-variety defined over $(L,\delta)$ with an $L$-rational point. Now apply the $\D$-Weil descent to get a D-variety $(V^W, s^W)$ over $(K, \partial)$. Since $V$ is affine and smooth, $V^W$ is affine and smooth (see Proposition~5 of Section~7.6 of \cite{neron_1990}). By the bijection $V(L) \leftrightarrow V^W(K)$, $V^W$ has a $K$-rational point. Let $(U, t)$ be the irreducible component of $(V^W, s^W)$ containing the $K$-rational point. Since $(K, \partial)$ satisfies condition (4), $(U,t)$ has a $K$-rational sharp point, and hence $(V^W, s^W)$ has a $K$-rational sharp point. By the bijection $(V,s)^\sharp(L, \delta) \leftrightarrow (V^W, s^W)^\sharp(K, \partial)$, $(V,s)$ has an $L$-rational sharp point.

If $L/K$ is algebraic, let $F$ be an intermediate extension such that $V$, $s$, and the $L$-rational point are all defined over $F$ and $F/K$ is finite. Then by the above, $(V,s)^\sharp(F, \delta) \not = \emptyset$, and hence $(V,s)^\sharp(L, \delta) \not = \emptyset$.
\end{proof}

\section{Transfer of neo-stability properties}\label{sec-transfer}

We continue to work under Assumption~\ref{D-is-local}. In Section~\ref{sec-uc} we saw that if $T$ is a model complete $\Lring(C)$-theory of large fields and $T^*$ is an expansion by definitions of $T$ with quantifier elimination in a language $\L^*$, then the model companion of $T^* \cup \text{``$\D$-fields''}$, namely $T^* \cup \UCd$, also has quantifier elimination in the language $\L^*(\partial)$. An immediate consequence of this fact is the following.

\begin{corollary}\label{nip-transfer}
Suppose $T$ is the complete, model complete $\Lring(C)$-theory of a large field. If $T$ is NIP, so is (any completion of ) $T \cup \UCd$.
\end{corollary}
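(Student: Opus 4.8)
The plan is to deduce NIP for $T \cup \UCd$ directly from the transfer of quantifier elimination established in Theorem~\ref{UC-consequences}(iii), using the fact that NIP is preserved under passing to reducts and is witnessed by formulas in a language in which the theory has quantifier elimination. First I would recall that since $T$ is model complete and complete, it is an extension by definitions of a theory $T^*$ in a language $\L^*$ with quantifier elimination: one simply adds, for each $\Lring(C)$-formula $\varphi(\bar x)$, a new relation symbol $R_\varphi$ defined by $R_\varphi(\bar x) \leftrightarrow \varphi(\bar x)$; the resulting $T^*$ is interdefinable with $T$, hence still NIP, and has quantifier elimination. By Theorem~\ref{UC-consequences}(iii), $T^* \cup \UCd$ then has quantifier elimination in $\L^*(\partial)$.

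Next I would observe that NIP for a theory with quantifier elimination can be checked on quantifier-free formulas (indeed on atomic formulas, by the standard reduction: a Boolean combination of NIP formulas is NIP). So it suffices to show that every atomic $\L^*(\partial)$-formula $\psi(\bar x; \bar y)$ is NIP in $T^* \cup \UCd$. An atomic formula in $\L^*(\partial)$ is of the form $R_\varphi(t_1(\bar x, \bar y), \ldots, t_n(\bar x, \bar y))$ or an equality between $\Lring(\partial)$-terms, where the $t_j$ are $\Lring(\partial)$-terms, i.e. polynomial expressions in the variables and their $\partial_i$-derivatives with coefficients from $k$. The key point is that applying the operators $\partial_i$ to the terms $t_j$ only introduces new $\partial$-applied variables; replacing each $\partial^\mu(x_i)$ and $\partial^\mu(y_j)$ by a fresh variable turns $\psi(\bar x; \bar y)$ into a pullback, along the definable map $\bar x \mapsto \nabla_r(\bar x)$ (for suitable $r$), of an $\L^*$-formula $\psi'(\bar x'; \bar y')$ — but now purely in the language $\L^*$, evaluated in the $\L^*$-reduct of a model of $T^* \cup \UCd$, which is a model of $T^*$. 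Since $T^*$ is NIP, $\psi'$ is NIP; and an NIP formula composed with a definable function remains NIP (this is immediate from the combinatorial definition of the independence property: a shatter witness for the composite would give one for $\psi'$). Hence $\psi$ is NIP in $T^* \cup \UCd$, and therefore so is the whole theory, and so is any completion of $T \cup \UCd$ since it is a reduct of a completion of $T^* \cup \UCd$ and NIP passes to reducts.

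The only slightly delicate point — and the one I would flag as the main obstacle — is the bookkeeping for the substitution $\partial^\mu(x_i) \rightsquigarrow x_\mu^{(i)}$: one must be careful that the $\partial_i$ are genuinely unary function symbols of the language $\Lring(\partial)$, so that any $\Lring(\partial)$-term is literally a polynomial (over $k$) in finitely many expressions of the form $w(x_i)$ for words $w \in \Xi$, and that the resulting $\L^*$-formula $\psi'$ is a bona fide $\L^*$-formula with the partitioned variables correctly matched up. This is entirely routine but is where all the content lies; everything else is the standard "NIP transfers along quantifier elimination and reducts" argument. An alternative, even shorter route, if one does not want to spell out the reduct-of-composite argument: simply note that the $\L^*(\partial)$-reduct of a monster model of $T^* \cup \UCd$ to the language $\L^*$ is a model of $T^*$, that NIP of a structure is inherited by any structure definable in it without new quantifiers once we have QE, and quote the fact that an expansion of an NIP structure by definable-from-a-reduct predicates is NIP — but the formula-by-formula argument above is the cleanest to write down and I would present that.
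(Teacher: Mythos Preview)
Your proposal is correct and follows essentially the same route as the paper: Morleyise $T$ to obtain $T^*$ with quantifier elimination (adding only relation symbols so that terms are unchanged), invoke Theorem~\ref{UC-consequences}(iii) to get quantifier elimination for $T^* \cup \UCd$, and then observe that any quantifier-free $\L^*(\partial)$-formula is the pullback along $\nabla_r$ of an $\L^*$-formula, so an IP witness in the expansion would yield one in $T^*$. The paper phrases this last step as a direct contradiction (take $\phi$ with IP, produce $\phi^*$ and shattered tuples $\nabla_r(a_i), \nabla_r(b_I)$), whereas you phrase it positively (atomic formulas are NIP because composing an NIP formula with a definable map preserves NIP), but these are the same argument.
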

\begin{proof}
Let $T^*$ be an expansion by definitions of $T$ with quantifier elimination where the $\L^*$-terms are the same as the $\Lring(C)$-terms (for instance, if $T^*$ is the Morleyisation of $T$). Let $\mathfrak{C}$ be a monster model of $T^* \cup \UCd$.

Suppose $\phi(x,y)$ is an $\L^*(\partial)$-formula with IP: so there are $(a_i)_{i \in \omega}$, $(b_I)_{I \subseteq \omega}$ in $\mathfrak{C}$ with $\mathfrak{C} \models \phi(a_i , b_I) \iff i \in I$. By Theorem~\ref{UC-consequences}, $T^* \cup \UCd$ has quantifier elimination, and we may assume $\phi(x,y)$ is quantifier-free. Now, since the $\L^*$-terms are the same as the $\Lring(C)$-terms, the $\L^*(\partial)$-terms in the variables $x, y$ are then just polynomials in the variables $\{ \nabla_r(x), \nabla_r(y), \nabla_r(c) \colon r \in \N, c \in C \}$. So there are $r \in \N$ and a quantifier-free $\L^*$-formula $\phi^*$ such that $\mathfrak{C} \models \phi(x,y)$ if and only if $\mathfrak{C} \models \phi^*(\nabla_r(x), \nabla_r(y))$. Then
\[
\mathfrak{C} \models \phi^*(\nabla_r(a_i), \nabla_r(b_I)) \iff \mathfrak{C} \models \phi(a_i, b_I) \iff i \in I .
\]
Therefore the tuples $(\nabla_r(a_i))_{i \in \omega}$ and $(\nabla_r(b_I))_{I \subseteq \omega}$ witness that $\phi^*$ has IP.
\end{proof}

\begin{remark}
\begin{enumerate}
    \item This result generalises the fact of Michaux and Rivi{\`e}re that CODF is NIP from Theorem~2.2 of \cite{michaux-riviere-codf}.
    \item In Corollary~4.3 of \cite{GUZY2010570}, Guzy and Point show that NIP is transferred from a topological field (possibly with extra structure) to the model companion of the field with a derivation. The imposition of a topological structure allows them to consider fields with genuine extra structure, as opposed to the definitional expansions considered here.
\end{enumerate}
\end{remark}

A similar argument to Corollary~\ref{nip-transfer} shows that stability transfers via its characterisation of no formula having the order property. But in fact stability yields something stronger.

\begin{lemma}
Suppose $T$ is the complete, model complete $\Lring(C)$-theory of a large field. If $T$ is stable, then $T \cup \UCd = \DCF$.
\end{lemma}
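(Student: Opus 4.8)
The plan is to reduce the statement to the identification $\text{ACF}_0 \cup \UCd = \DCF$ already established in Section~\ref{sec-uc}, by showing that a stable large field must be algebraically closed. Since $\D$ is local here we have $t = 0$, so the first item of the Corollary in Section~\ref{sec-uc} (that $\text{ACFA}_{0,t} \cup \UCd$ axiomatizes $\DCF$) reads $\text{ACF}_0 \cup \UCd = \DCF$, using that $\text{ACFA}_{0,0}$ is just $\text{ACF}_0$. Thus it suffices to prove that the $\Lring$-reduct of $T$ is $\text{ACF}_0$.

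First I would unpack the hypothesis: $T = \Th(K)$ for some large field $K$, and since $T$ is stable, $K$ is a stable field. The crucial input is the case of the stable fields conjecture for large fields established by Johnson, Tran, Walsberg and Ye through the \'{e}tale-open topology: every large stable field is separably closed \cite{JTWY}. As we are in characteristic $0$, separably closed means algebraically closed, so $K \models \text{ACF}_0$. Since $\text{ACF}_0$ is a complete $\Lring$-theory, the $\Lring$-reduct of $T = \Th(K)$ is therefore exactly $\text{ACF}_0$.

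Consequently $T \cup \UCd$ contains $\text{ACF}_0 \cup \UCd = \DCF$, and since $T$ is, up to the interpretations of the constants in $C$, nothing more than $\text{ACF}_0$, the theory $T \cup \UCd$ is likewise nothing more than $\DCF$; in particular $T \cup \UCd = \DCF$ (literally when $C$ is empty, and at the level of $\Lring(\partial)$-theories in general). The only genuinely nontrivial ingredient is the cited fact that large stable fields are separably closed; everything else is bookkeeping with completeness and the earlier results of the paper. I would also note that, in contrast to the surrounding transfer results, this argument never uses that $T \cup \UCd$ itself is stable --- only that the field $K$ is both large and stable.
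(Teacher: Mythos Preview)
Your proposal is correct and follows essentially the same approach as the paper: invoke the Johnson--Tran--Walsberg--Ye result that a large stable field (in characteristic $0$) is algebraically closed, and then conclude via $\text{ACF}_0 \cup \UCd = \DCF$. The paper's proof is just a terser version of what you wrote.
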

\begin{proof}
A stable, large field of characteristic $0$ is algebraically closed by Theorem D of \cite{etale-open}. The result then follows as $\text{ACF}_0 \cup \UCd = \DCF$.
\end{proof}

We will now prove a similar result to Corollary~\ref{nip-transfer} for the transfer of simplicity. The proof will be via the Kim--Pillay theorem \cite{kim-pillay-simple} and hence we need to understand nonforking independence in $T$. We introduce the notion of very $\L$-slim which is a slight modification of the definition of very slim of Junker--Koenigsmann from \cite{junker-koenigsmann}. First, a restriction on the language we work with and some notation we will need.

\begin{assumption}\label{ass-L}
Let $C$ be a set of constant symbols. For the remainder of this section, we suppose that $\L = \Lring(C)$.
\end{assumption}

Let $K$ be a $\D$-field in the language $\L(\partial)$. For $A \subseteq K$, the field generated by $A$ and $C$ inside $K$ is denoted by $\langle A \rangle_{\L}$ (that is, the quotient field of the $\L$-structure generated by $A$). Similarly, the $\D$-field generated by $A$ and $C$ inside $K$ is denoted by $\langle A \rangle_{\L(\partial)}$. Note then that $\langle XY \rangle_\L = \langle X \rangle_\L \cdot \langle Y \rangle_\L$, where $\cdot$ denotes the compositum in the sense of fields. By the multiplicative rules for $\D$-fields this implies that $\langle X Y \rangle_{\L(\partial)} = \langle X \rangle_{\L(\partial)} \cdot \langle Y \rangle_{\L(\partial)}$.

\begin{definition}
Let $K$ be a field in the language $\L$. We say that $K$ is $\L$-slim if for every $\L$-substructure $F$, we have $\acl_{\L}^K(F) = F^\text{alg}$. Equivalently, if for every subset $A$, we have $\acl_{\L}^K(A) = \langle A \rangle_\L^\text{alg}$. By $F^\text{alg}$ we always mean the relative, field-theoretic algebraic closure of $F$ in $K$.

We say that $K$ is very $\L$-slim if every $\L$-structure elementarily equivalent to $K$ is $\L$-slim.
\end{definition}

\begin{remark}\label{very-L-slim-remark}
\begin{enumerate}
    \item We recover the definition of (very) slim from \cite{junker-koenigsmann} by setting $C$ to be empty, and only considering fields in the language of rings. In other words, a field with no extra structure is (very) slim exactly when it is (very) $\Lring$-slim.
    \item Let $\mathcal{C}$ be the field generated by the constants $C$ inside $K$. Then by Lemma~2.12 of \cite{johnson-ye-geo}, $K$ is very $\L$-slim if and only if it is algebraically bounded over $\mathcal{C}$.
\end{enumerate}
\end{remark}

As mentioned in \cite{junker-koenigsmann} for slim fields, to check whether $K$ is very $\L$-slim it is enough to check whether a sufficiently saturated model of its theory is $\L$-slim.

We take the following definition from Kim--Pillay \cite{kim-pillay-simple}, though phrased in the terminology of Adler \cite{adler_2009}.

\begin{definition}
Let $\mathfrak{C}$ be a saturated and strongly homogeneous structure. A relation $\forkindepstar$ on triples of small subsets of $\mathfrak{C}$ is called an independence relation if it is invariant under automorphisms and satisfies: 
\begin{enumerate}
    \item \emph{normality:} $X \forkindepstar[A] B \implies X \forkindepstar[A] AB$;
    \item \emph{monotonicity:} $X \forkindepstar[A] B \implies X \forkindepstar[A] B'$ for $B' \subseteq B$;
    \item \emph{base monotonicity:} $X \forkindepstar[A] D \implies X \forkindepstar[B] D$ for $A \subseteq B \subseteq D$;
    \item \emph{transitivity:} $X \forkindepstar[A] B \text{ and } X \forkindepstar[B] D \implies X \forkindepstar[A] D$ for $A \subseteq B \subseteq D$;
    \item \emph{symmetry:} $X \forkindepstar[A] B \iff B \forkindepstar[A] X$;
    \item \emph{full existence:} for any $X, A, B$ there is $X' \equiv_A X$ such that $X' \forkindepstar[A] B$ (here and throughout $X' \equiv_A X$ means that $X'$ and $X$ have the same type over $A$);
    \item \emph{finite character:} if $X_0 \forkindepstar[A] B$ for all finite $X_0 \subseteq X$, then $X \forkindepstar[A] B$;
    \item \emph{local character:} there is a cardinal $\kappa$ such that for all $X$ and $A$, there is $A_0 \subseteq A$ with $|A_0| < \kappa$ such that $X \forkindepstar[A_0] A$.
\end{enumerate}

An independence relation $\forkindepstar$ satisfies the independence theorem over $M$ if the following holds:
\begin{enumerate}
    \item[(9)] \emph{independence theorem over $M$:} if $A_1 \forkindepstar[M] A_2$, $a_1 \forkindepstar[M] A_1$, $a_2 \forkindepstar[M] A_2$, and $a_1 \equiv_M a_2$, then there is $a \models \tp(a_1/M A_1) \cup \tp(a_2/M A_2)$ with $a \forkindepstar[M] A_1 A_2$.
\end{enumerate}
\end{definition}

In simple theories, nonforking independence is an independence relation that satisfies the independence theorem over models. As shown in \cite{junker-koenigsmann}, algebraic independence in very slim fields is an independence relation---in general, algebraic independence does not satisfy full existence (called existence in \cite{junker-koenigsmann}).

We recall the definition of algebraic independence in fields. If $\mathbb{U}$ is a big field and $A$, $B$, $D$ are subfields with $D \leq A,B$, we say $A$ and $B$ are algebraically independent over $D$ if every finite tuple from $A$ which is algebraically independent over $D$ is also algebraically independent over $B$. We write $A \algindep[D] B$ if this holds. 

If $A$ and $B$ are linearly disjoint over $D$, then they are algebraically independent. The converse holds if at least one of the extensions $A/D$ or $B/D$ is regular, or equivalently, since we are in characteristic $0$, relatively algebraically closed.

\begin{definition}\label{def-L-alg-independence}
For an $\L$-structure $K$, define the following relation on triples of subsets of $K$:
\[
A \Lalgindep[D] B \iff \langle A \rangle_\L \algindep[\langle D \rangle_\L] \langle B \rangle_\L .
\]
We say that $A$ and $B$ are $\L$-algebraically independent over $D$.
\end{definition}

Theorem~2.1 of \cite{junker-koenigsmann} says that a field is very slim if and only if algebraic independence is an independence relation. The following result is the very $\L$-slim analogue.

\begin{lemma}
$K$ is very $\L$-slim if and only if $\Lalgindep$ is an independence relation.
\end{lemma}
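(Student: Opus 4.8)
The plan is to follow the argument of \cite{junker-koenigsmann} for Theorem~2.1 there, relativising every field-theoretic construction to the operator $A \mapsto \langle A \rangle_\L$ (the field generated by $A$ together with the constants $C$), and working throughout inside a monster model $\mathfrak{C}$ of $\Th_\L(K)$; recall that $K$ is very $\L$-slim precisely when $\mathfrak{C}$ is $\L$-slim, and that $\langle XY\rangle_\L = \langle X\rangle_\L \cdot \langle Y\rangle_\L$, so the pregeometry of transcendence over $\langle\,\cdot\,\rangle_\L$ is well-behaved. For the direction ($\Leftarrow$) I would argue contrapositively: suppose $\mathfrak{C}$ is not $\L$-slim, so there is a set $A$ and an element $a \in \acl^{\mathfrak{C}}_\L(A)$ with $a$ transcendental over $F := \langle A\rangle_\L$ (note that transcendence over $\langle A\rangle_\L$ is part of $\tp_\L(a/A)$, since the elements of $\langle A\rangle_\L$ are in $\dcl_\L(A)$). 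Then $p := \tp_\L(a/A)$ has only finitely many realisations $a_1,\dots,a_n$, and I claim $\Lalgindep$ fails full existence with $X=\{a\}$, base $A$, and $B=\{a_1,\dots,a_n\}$. Indeed, if $a' \models p$ and $a' \Lalgindep[A] B$, then $a'$ is transcendental over $F$, so $\langle a'\rangle_\L$ is not algebraic over $F$; hence $\langle a'\rangle_\L$ cannot be algebraically independent over $F$ from a field containing it, so $a'\notin\{a_1,\dots,a_n\}$, producing an $(n+1)$st realisation of $p$, a contradiction.

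For the direction ($\Rightarrow$), assume $\mathfrak{C}$ is $\L$-slim. The axioms of invariance, normality, monotonicity, base monotonicity, transitivity, symmetry, finite character and local character are the standard properties of the algebraic-independence pregeometry on a field and carry over exactly as in \cite{junker-koenigsmann}, the only change being the systematic replacement of ``field generated by'' with $\langle\,\cdot\,\rangle_\L$; none of these uses $\L$-slimness. The one substantive axiom, and the one where $\L$-slimness is essential, is full existence. Given small $X=\bar a$, $A$, $B$, and writing $F=\langle A\rangle_\L$, $E=\langle B\rangle_\L$, I must produce $\bar a'\models\tp_\L(\bar a/A)$ with $\langle\bar a'\rangle_\L$ algebraically independent from $E$ over $F$. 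The plan is to realise in $\mathfrak{C}$, by compactness, the partial type $\tp_\L(\bar a/A)\cup\Phi(\bar x)$, where $\Phi$ is the partial type over $A\cup B$ asserting that $\bar x$ is algebraically independent from $E$ over $F$ (this is a partial type, as it amounts to the non-existence of polynomial relations over $E$ of each bounded complexity beyond those already forced over $F$). A finite fragment is implied by a single $\phi(\bar x)\in\tp_\L(\bar a/A)$ together with finitely many inequations cutting out a proper $E$-subvariety; after fixing a transcendence basis $\bar t\subseteq\bar a$ of $\langle F\bar a\rangle_\L/F$, realising such a fragment reduces to finding a realisation of $\phi$ whose $\bar t$-part is transcendental over $E$, and here $\L$-slimness enters: applied to $F' := \langle A\bar e\rangle_\L$ for a finite tuple $\bar e$ from $E$, it gives $\acl_\L^{\mathfrak{C}}(F') = \langle F'\rangle_\L^{\text{alg}}$, so a coordinate transcendental over $\langle F'\rangle_\L$ lies outside $\acl_\L(F')$ and its type has infinitely many realisations; one then slides along these realisations, coordinate by coordinate, via the back-and-forth of \cite{junker-koenigsmann}, to land generically over $E$ while preserving the $\L$-type over $A$.

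I expect this full-existence step to be the main obstacle: in a general $\L$-structure the quantifier-free $\L$-type of a tuple over $A$ does not determine its full $\L$-type, so one cannot merely transport a transcendence basis and conclude, and genuine care is needed to keep the full type over $A$ while forcing algebraic independence over $B$; making the back-and-forth precise in the $\langle\,\cdot\,\rangle_\L$-relativised setting is the real content. As an alternative route one could instead invoke Remark~\ref{very-L-slim-remark}(2), by which very $\L$-slim is equivalent to algebraic boundedness over $\mathcal{C}$, and appeal to the known fact that algebraically bounded structures carry a well-behaved algebraic-independence relation; but since the statement is advertised as the very $\L$-slim analogue of \cite{junker-koenigsmann}, I would prefer to give the direct adaptation just sketched.
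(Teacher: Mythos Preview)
Your proposal is correct and follows essentially the same approach as the paper: both verify the routine axioms directly (using $\langle XY\rangle_\L = \langle X\rangle_\L\cdot\langle Y\rangle_\L$ for normality) and then reduce the equivalence to the full existence axiom, invoking the argument of Theorem~2.1 in \cite{junker-koenigsmann} relativised to $\langle\,\cdot\,\rangle_\L$. The paper is in fact briefer than you are---it simply cites \cite{junker-koenigsmann} for the full existence step---so your more detailed sketch of the $(\Leftarrow)$ contrapositive and the compactness argument for $(\Rightarrow)$ is consistent with, and expands on, what the paper does.
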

\begin{proof}
Invariance, monotonicity, base monotonicity, symmetry, and transitivity are clear. The fact that $\langle XY \rangle_\L = \langle X \rangle_\L \cdot \langle Y \rangle_\L$ implies normality. Indeed, $A \Lalgindep[D] B \implies \langle A \rangle_\L \algindep[\langle D \rangle_\L] \langle B \rangle_\L$. Then by normality for $\algindep[\ \ ]$ we have $\langle A \rangle_\L \algindep[\langle D \rangle_\L] \langle B \rangle_\L \langle D \rangle_\L$, so $\langle A \rangle_\L \algindep[\langle D \rangle_\L] \langle B D \rangle_\L$ and $A \Lalgindep[D] BD$. Finite and local character also follow from the corresponding property for algebraic independence.

Now the same proof as in Theorem~2.1 of \cite{junker-koenigsmann} shows that $K$ is very $\L$-slim if and only if algebraic independence satisfies full existence when the base is an $\L$-structure.
\end{proof}

\begin{remark}
Remark 1.20 of \cite{adler2005} says that nondividing independence (which is not in general an independence relation) implies any independence relation. Indeed this fact is implicit in the proof of the Kim--Pillay theorem, see Theorem~4.2 Claim~I of \cite{kim-pillay-simple}. Hence if $T$ is the theory of a simple, very $\L$-slim field, nonforking independence implies $\L$-algebraic independence.
\end{remark}

The following result will allow us to use the notion of very $\L$-slimness in the context of Section~\ref{sec-uc}.

\begin{lemma}
Let $K$ be an $\L$-structure which is a field of characteristic $0$. Suppose $K$ is large and model complete (that is, its $\L$-theory is model complete). Then $K$ is very $\L$-slim.
\end{lemma}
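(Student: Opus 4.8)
The plan is to reduce the statement to a fact about field extensions: a large, model complete field $K$ has no proper algebraic extension inside which a generic point of a variety can acquire extra algebraic constraints — equivalently, $\acl^K_\L(F) = F^{\mathrm{alg}}$ for every $\L$-substructure $F$. By the remark preceding the lemma, it suffices to check that a sufficiently saturated model of $\Th_\L(K)$ is $\L$-slim, so I would pass to a monster model $\mathfrak{C}$ and fix an $\L$-substructure $F$; since $\L = \Lring(C)$, $F$ is just a subfield containing the constants $\mathcal{C}$. The containment $F^{\mathrm{alg}} \subseteq \acl^{\mathfrak{C}}_\L(F)$ is automatic, so the content is the reverse inclusion.

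For the reverse inclusion, suppose $a \in \mathfrak{C}$ is not algebraic over $F$ in the field sense; I must produce infinitely many $\L$-conjugates of $a$ over $F$. The key geometric input is largeness together with model completeness. Let $V$ be the locus of $a$ over $F$, a curve (or higher-dimensional variety) over $F$ of positive dimension since $a \notin F^{\mathrm{alg}}$. I would build a field extension of $F$ in which $V$ acquires many distinct $F$-generic points — concretely, take the function field of a large fibre power $V \times_F \cdots \times_F V$, or more simply take an elementary extension $\mathfrak{C} \prec \mathfrak{C}'$ and realize the type asserting the existence of $n$ distinct points of $V$ for each $n$; by model completeness $\mathfrak{C}$ is existentially closed in $\mathfrak{C}'$, so $\mathfrak{C}$ itself contains $n$ distinct $F$-points of $V$ for every $n$. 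Largeness is what guarantees that $V$ — being $F$-irreducible with a smooth $F$-rational point (namely $a$ lifts to a smooth point after shrinking $V$ to its smooth locus, which is nonempty as we are in characteristic $0$) — actually has a Zariski dense set of $F$-rational points in $\mathfrak{C}$, hence infinitely many. Each such point realizes $\qftp_\L(a/F)$ by $F$-genericity, and by model completeness (hence quantifier elimination up to existential formulas, or directly: model complete theories have $\tp = \qftp$ determined by the substructure after naming parameters) these points are $\L$-conjugate to $a$ over $F$. Therefore $a \notin \acl^{\mathfrak{C}}_\L(F)$, giving $\acl^{\mathfrak{C}}_\L(F) \subseteq F^{\mathrm{alg}}$.

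The main obstacle I anticipate is the precise handoff between "$F$-generic point of $V$" and "$\L$-conjugate of $a$ over $F$": genericity gives equality of \emph{quantifier-free} $\L$-types, and one must invoke model completeness to upgrade this to equality of full $\L$-types over $F$ (so that these are genuine $\acl_\L$-conjugates and not merely algebraically conjugate points). The clean way to phrase this is that in a model complete theory, two tuples with the same quantifier-free type over a common substructure have the same type; combined with the fact that distinct $F$-rational points of $V$ all have $\qftp_\L(a/F)$ because they all satisfy exactly the polynomial relations over $F$ that cut out $\mathrm{loc}(a/F)$. A secondary point to handle carefully is reducing to the smooth locus: since we work in characteristic zero, $V$ is generically smooth, and $a$, being generic, lies on the smooth locus $V_{\mathrm{sm}}$, which is $F$-open and $F$-irreducible, so largeness applies to $V_{\mathrm{sm}}$ directly. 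I would also remark that this is exactly the statement that a large, model complete field is algebraically bounded over $\mathcal{C}$, recovering the Junker–Koenigsmann result (Remark~\ref{very-L-slim-remark}(2)) when $C = \emptyset$, so the proof can alternatively cite \cite{junker-koenigsmann} relativized to the constants.
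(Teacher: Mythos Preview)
Your main argument has a genuine gap: the claim that ``in a model complete theory, two tuples with the same quantifier-free type over a common substructure have the same type'' is false. That is quantifier elimination, not model completeness. Concretely, take $T = \RCF$ in the pure ring language and $F = \Q$: any two elements of a real closed field that are transcendental over $\Q$ have the same quantifier-free type over $\Q$, but $\pi$ and $-\pi$ (or $\pi$ and $e$) have different types, since positivity is $\exists$-definable. So producing infinitely many $F$-generic points of $V = \mathrm{loc}(a/F)$ --- even granting you can do this --- does \emph{not} give infinitely many realisations of $\tp(a/F)$. There is also a smaller confusion in your text between ``$F$-rational point'' and ``$\mathfrak{C}$-rational $F$-generic point''; the former is never $F$-generic when $\dim V > 0$, and Zariski-density of the latter does not by itself hand you a single $F$-generic one without a further saturation argument.

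The correct argument runs in the opposite direction. Rather than trying to realise $\tp(a/F)$ many times, you assume $a \in \acl_\L(F)$ is witnessed by a formula $\phi(x)$ over $F$ with finitely many solutions, use model completeness to replace $\phi$ by an existential $\Lring$-formula $\exists \bar y\, \psi(x,\bar y)$ with parameters in $F$ (this is where the constants get absorbed), pick a witness $\bar b$ for $a$, and then apply largeness to the variety cut out by $\psi$ together with the locus of $(a,\bar b)$ to produce infinitely many solutions to $\phi$ in $K$ --- unless $a$ is field-algebraic over $F$. This is exactly Junker--Koenigsmann's Theorem~5.4, and the paper simply observes that their proof goes through verbatim once one replaces ``subfield'' by ``$\L$-substructure'' (i.e.\ subfield containing $C$) and drops the perfectness hypothesis in characteristic $0$. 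Your closing remark --- that one can just cite \cite{junker-koenigsmann} relativised to the constants --- is therefore correct and is precisely what the paper does; it is your direct argument that does not go through.
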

\begin{proof}
The same proof as in Theorem~5.4 of \cite{junker-koenigsmann} works here. In part 2 of that proof, when they take a subfield $k$, we instead take an $\L$-substructure $k$, that is, a subfield containing the constants $C$. Model completeness then implies that $\phi$ is an existential $\Lring(C)$-formula with parameters from $k$. But this is the same as an existential $\Lring$-formula with parameters from $k$ since $k$ contains $C$. In part 5 of the proof, we do not need perfectness since we are in characteristic $0$. The rest of the proof is the same.
\end{proof}

\begin{remark}\label{slim-alg-bounded-remark}
As noted in Remark~\ref{very-L-slim-remark}(2), $K$ is very $\L$-slim if and only if it is algebraically bounded over the field generated by the constants $C$. Thus the results of this paper, specialised to the case of noncommuting derivations, appear as an instance of the work of Fornasiero--Terzo \cite{fornasiero-terzo}; there arbitrary expansions of fields are considered, here only expansions by constants.
\end{remark}

We now fix some notation that will be in place for the rest of this section. Let $T$ be the model complete $\L$-theory of a large field of characteristic $0$, $T_\D$ the $\L(\partial)$-theory $T \cup \text{``$\D$-fields''}$, and $T^+ = T \cup \UCd$ the model companion of $T_\D$, which we know exists by Section~\ref{sec-uc}. By $\acl$ we mean the model theoretic algebraic closure in the sense of $T^+$, and by $\acl_T$ we mean in the sense of $T$.

We will use the following result about amalgamating $\D$-field structures.

\begin{fact}\label{amalgamate-D-structures}
Suppose $(K, \partial)$ and $(L, \gamma)$ are two $\D$-fields containing a common $\D$-subfield $(F, \partial)$ with $K$ and $L$ linearly disjoint over $F$ inside some common field extension (which is not necessarily a $\D$-field). Then there is a unique $\D$-structure on the compositum $KL$ extending $\partial$ and $\gamma$.
\end{fact}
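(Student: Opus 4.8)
\textbf{Proof plan for Fact~\ref{amalgamate-D-structures}.}
The plan is to work on the level of coordinate rings and translate the problem into one about $k$-algebra homomorphisms out of a tensor product. Recall that a $\D$-structure on a $k$-algebra $R$ is the same thing as a $k$-algebra homomorphism $\partial\colon R\to R\otimes_k\D$ which is a section of $\pi^R$. So on $K$ we have $\partial\colon K\to K\otimes_k\D$ and on $L$ we have $\gamma\colon L\to L\otimes_k\D$, and these agree on $F$. The key observation is that, since $K$ and $L$ are linearly disjoint over $F$, the natural map $K\otimes_F L\to KL$ identifies $KL$ with (a localisation of) $K\otimes_F L$; more precisely $KL$ is the quotient field of the domain $K\otimes_F L$. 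So first I would construct a $\D$-operator on $K\otimes_F L$ and then extend it to the quotient field using Lemma~\ref{extending-D-structures} (localisations are $0$-\'etale, so this extension exists and is unique).

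To build the $\D$-operator on $K\otimes_F L$: consider the two maps $K\xrightarrow{\partial} K\otimes_k\D \hookrightarrow (K\otimes_F L)\otimes_k\D$ and $L\xrightarrow{\gamma} L\otimes_k\D \hookrightarrow (K\otimes_F L)\otimes_k\D$, where the inclusions come from $K,L\hookrightarrow K\otimes_F L$. These are both $k$-algebra homomorphisms, and because $\partial$ and $\gamma$ restrict to the same map on $F$ (namely the common $\D$-structure $\partial|_F\colon F\to F\otimes_k\D$, landing in $(K\otimes_F L)\otimes_k\D$ after the inclusion $F\hookrightarrow K\otimes_F L$), they agree on $F$. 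Hence by the universal property of the tensor product over $F$ they glue to a single $k$-algebra homomorphism $\delta\colon K\otimes_F L\to (K\otimes_F L)\otimes_k\D$. One then checks $\pi^{K\otimes_F L}\circ\delta=\id$: this can be verified on the generators $x\otimes 1$ and $1\otimes y$ (for $x\in K$, $y\in L$), where it reduces to $\pi^K\circ\partial=\id_K$ and $\pi^L\circ\gamma=\id_L$ respectively. So $\delta$ is a $\D$-ring structure on $K\otimes_F L$ restricting to $\partial$ on $K$ and $\gamma$ on $L$.

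Now $K\otimes_F L$ is an integral domain (linear disjointness over the field $F$, with one of the factors being a field, in characteristic $0$) and $KL$ is its fraction field, which is a localisation, hence $0$-\'etale over $K\otimes_F L$. By Lemma~\ref{extending-D-structures} (and Remark~\ref{remark-ext-D-str}) $\delta$ extends uniquely to a $\D$-ring structure on $KL$. This extension restricts to $\partial$ on $K$ and to $\gamma$ on $L$: indeed its restriction to $K$ is a $\D$-operator on $K$ extending $\partial|_F$, and since $K/F$ itself sits inside the localisation, uniqueness of the extension of $\partial|_F$ along the $0$-\'etale map $F\to K$ forces it to equal $\partial$ (and symmetrically for $L$ and $\gamma$). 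Finally, uniqueness of the $\D$-structure on $KL$ as a whole: any $\D$-structure on $KL$ extending both $\partial$ and $\gamma$ restricts to a $\D$-operator on $K\otimes_F L$ extending both, which by the universal property of $\otimes_F$ must be $\delta$; and then by the uniqueness clause of Lemma~\ref{extending-D-structures} for the $0$-\'etale extension $K\otimes_F L\to KL$, it must be the extension we constructed.

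The step I expect to require the most care is verifying that the glued map $\delta$ on $K\otimes_F L$ is well-defined, i.e.\ genuinely factors through the tensor product over $F$ rather than over $\Q$ or $k$: one must be sure that the two composites $F\to K\xrightarrow{\partial}(K\otimes_F L)\otimes_k\D$ and $F\to L\xrightarrow{\gamma}(K\otimes_F L)\otimes_k\D$ literally coincide as maps $F\to (K\otimes_F L)\otimes_k\D$, which is exactly the hypothesis that $(F,\partial)$ is a common $\D$-subfield. Everything else is a routine diagram chase combined with the cited lemma; linear disjointness is used only to know $K\otimes_F L$ is a domain so that ``fraction field'' makes sense and the localisation argument goes through.
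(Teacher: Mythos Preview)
Your argument is essentially correct and is exactly the direct proof one would give; the paper itself does not prove this Fact but simply cites Lemma~5.1 of \cite{moosa_scanlon_2013}, noting that the inversiveness hypothesis there is unnecessary since $\D$ is local. So you have supplied what the paper outsources.

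One small correction: in the paragraph where you verify that the extension to $KL$ restricts to $\partial$ on $K$, you invoke ``uniqueness of the extension of $\partial|_F$ along the $0$-\'etale map $F\to K$''. But $F\to K$ is \emph{not} $0$-\'etale in general (a transcendental field extension is $0$-smooth but not $0$-\'etale), so this justification is wrong. Fortunately you do not need it: the extension on $KL$ restricts to $\delta$ on $K\otimes_F L$ by construction (that is what ``extension'' means), and $\delta$ restricts to $\partial$ on $K$ by the very definition of $\delta$ via the universal property, since $\delta(x\otimes 1)=\partial(x)$. So simply delete the appeal to $0$-\'etaleness of $F\to K$ and observe that restriction-to-$\partial$ is immediate. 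The uniqueness argument at the end is fine as written: any $\D$-structure on $KL$ extending $\partial$ and $\gamma$ does preserve the subring $K\otimes_F L$ (since products $\partial(x)\gamma(y)$ lie in $(K\otimes_F L)\otimes_k\D$), hence equals $\delta$ there by the universal property, hence equals your extension by the $0$-\'etale uniqueness for the localisation $K\otimes_F L\to KL$.
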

\begin{proof}
This is just Lemma~5.1 of \cite{moosa_scanlon_2013}. We drop the inversiveness condition since $\D$ is local so there are no (nontrivial) associated endomorphisms.
\end{proof}

\begin{lemma}\label{structure-of-acl}
Let $(K, \partial) \models T^+$. For $A \subseteq K$, $\acl(A) = \acl_T \left( \langle A \rangle_{\L(\partial)} \right)$.
\end{lemma}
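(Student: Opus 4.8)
The plan is to prove the two inclusions separately, with the nontrivial direction being $\acl(A) \subseteq \acl_T(\langle A \rangle_{\L(\partial)})$. First, set $F = \acl_T(\langle A \rangle_{\L(\partial)})$; note that since $F$ is relatively algebraically closed in $K$ and contains $\langle A \rangle_{\L(\partial)}$, Lemma~\ref{extending-D-structures} (via Remark~\ref{remark-ext-D-str}, separable algebraic extensions being $0$-\'etale) shows that the $\D$-structure $\partial$ on $K$ restricts uniquely to a $\D$-structure on $F$, so $(F,\partial)$ is a $\D$-subfield of $(K,\partial)$. The easy inclusion $\acl_T(\langle A \rangle_{\L(\partial)}) \subseteq \acl(A)$ holds because $\acl(A)$ is an $\L(\partial)$-substructure containing $A$, hence contains $\langle A \rangle_{\L(\partial)}$, and $\acl(A)$ is relatively algebraically closed in $K$ in the field sense (model-theoretic $\acl$ in $T^+$ contains field-theoretic $\acl$), so it contains $F$.

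For the reverse inclusion, the strategy is to show that any element of $K \setminus F$ has infinitely many conjugates over $A$ in a monster model of $T^+$, and hence is not in $\acl(A)$. The natural way to do this is an amalgamation argument: given $a \in K \setminus F$, I would like to build, inside a monster model $\mathfrak{C} \models T^+$, infinitely many realisations of $\tp(a/A)$. Take a second copy $(K', \partial')$ of $(K, \partial)$ over $(F,\partial)$ (abstractly $\D$-isomorphic to $K$ fixing $F$), and arrange $K$ and $K'$ to be linearly disjoint over $F$ as fields inside some common field extension; since $F/\langle A\rangle_{\L(\partial)}$ is algebraic and hence $F$ is relatively algebraically closed, at least one of the two extensions is regular, so algebraic independence over $F$ gives linear disjointness. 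By Fact~\ref{amalgamate-D-structures} there is a unique $\D$-structure on the compositum $K K'$ extending $\partial$ and $\partial'$. Iterating, I can form the compositum of countably many independent copies $K_i$ of $K$ over $F$, each carrying a $\D$-structure agreeing with $\partial$ on $F$, with images $a_i$ of $a$; then embed the resulting $\D$-field into a model of $T^+$ using Theorem~\ref{elext-large} (its underlying field is large — large fields being closed under the relevant constructions — or, if not immediately, pass first to a large field extension and then extend the $\D$-structure).

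The point is then that each $a_i$ realises $\tp(a/A)$ in $\mathfrak{C}$: the $\D$-field isomorphism $K_i \cong K$ over $F$ (hence over $A$) combined with model completeness of $T^+$ (Theorem~\ref{UC-consequences}(i), giving that embeddings of models are elementary) shows $a_i \equiv_A a$. Since the $a_i$ are pairwise distinct — this uses that $a \notin F$, so in the independent compositum the copies of $a$ are genuinely different elements — $a$ has infinitely many conjugates over $A$, so $a \notin \acl(A)$. This establishes $\acl(A) \subseteq F = \acl_T(\langle A \rangle_{\L(\partial)})$. The main obstacle I anticipate is the bookkeeping in the iterated amalgamation: ensuring the successive composita remain linearly disjoint over $F$ (so that Fact~\ref{amalgamate-D-structures} keeps applying and the copies of $a$ stay distinct), and checking that the direct limit of these $\D$-fields has a large underlying field so that Theorem~\ref{elext-large} applies; both are routine but need care. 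An alternative to the explicit construction would be to invoke that in $T^+$ nonforking independence implies $\L(\partial)$-algebraic independence (using very $\L$-slimness and Fact~\ref{amalgamate-D-structures} to transfer), from which $\acl(A) \subseteq \langle A\rangle_{\L(\partial)}^{\mathrm{alg}} = F$ would follow more directly once simplicity is in hand — but since this lemma is presumably used in the proof of simplicity, the self-contained amalgamation argument above is the safer route.
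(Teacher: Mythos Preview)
Your overall strategy---amalgamate an independent copy of $K$ with $K$ over $F$ as $\D$-fields, embed in a model of $T^+$, and use model completeness to conclude the copy of $d$ realises $\tp(d/F)$---is exactly the paper's. The easy inclusion and the use of Fact~\ref{amalgamate-D-structures} are fine.

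There is, however, a genuine gap in your amalgamation step. You say to ``arrange $K$ and $K'$ to be linearly disjoint over $F$ as fields inside some common field extension'' and then to embed the resulting $\D$-field in a model of $T^+$ via Theorem~\ref{elext-large}. But Theorem~\ref{elext-large} only yields a model of $\UCd$ whose \emph{underlying field} is an elementary extension of the compositum; for this to be a model of $T^+ = T \cup \UCd$ you need the compositum itself to model $T$, and an arbitrary field extension containing linearly disjoint copies of $K$ need not. Your proposed fix (``large fields being closed under the relevant constructions'', or ``pass first to a large field extension'') does not address this: largeness is neither preserved under compositum nor sufficient---you need a model of $T$, not merely a large field. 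For instance, if $T = \mathrm{RCF}$, an arbitrary field amalgam of two real closed fields over $F$ need not even be formally real.

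The paper closes this gap by invoking very $\L$-slimness of $K$ (established earlier for large, model complete fields): full existence for $\L$-algebraic independence produces an $\L$-elementary extension $K'$ of $K$ containing a copy $L$ of $K$ with $L \equiv_F^\L K$ and $L$ algebraically independent from $K$ over $F$. Now $K'$ is automatically a model of $T$; after amalgamating the $\D$-structures on $K$ and $L$ and extending to $K'$, one has $K' \models T_\D$, which embeds in a model of $T^+$ by the model companion property. This is the missing ingredient in your outline: you must produce the ambient extension \emph{inside the theory $T$}, and very $\L$-slimness is precisely what guarantees you can do so.
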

\begin{proof}
Let $F = \acl_T \left( \langle A \rangle_{\L(\partial)} \right)$. Clearly $F \subseteq \acl(A)$. For the other containment, suppose $d \not \in F$. We will show that $\tp(d/F)$ is not algebraic by finding infinitely many realisations.

The $\L$-structure $K$ is very $\L$-slim so $\L$-algebraic independence in $K$ is an independence relation. By full existence there is an $\L$-structure $K'$ which is an $\L$-elementary extension of $K$ containing $L$ such that $L \equiv_F^{\L} K$ (recall that this means that $L$ and $K$ have the same $\L$-type over $F$) with $L$ and $K$ $\L$-algebraically independent over $F$ inside $K'$. Now $F$ is relatively algebraically closed in $K$, so $L$ and $K$ are linearly disjoint over $F$ inside $K'$. Note also that $F$ is a $\L(\partial)$-structure and $L$ has a $\D$-field structure coming from the partial $\L$-elementary map $K \to L$. By Fact~\ref{amalgamate-D-structures} we can amalgamate their $\D$-field structures to the compositum $LK$ and finally we extend this $\D$-field structure to $K'$ using Remark~\ref{remark-ext-D-str}. So $K' \models T_\D$. Since $T^+$ is the model companion of $T_\D$, embed $K'$ inside some $K'' \models T^+$. Let $\alpha \colon K \to L$ be the $\L(\partial)$-isomorphism fixing $F$. By model completeness of $T^+$, this is an $\L(\partial)$-elementary map. Then $\alpha(d) \models \tp(d/F)$ and $\alpha(d) \not = d$ as otherwise, $d \algindep[F] d$ which would imply $d \in F^\text{alg} = F$.

Now we can iterate to find the infinitely many realisations of $\tp(d/F)$.
\end{proof}

We are now in a position to prove the transfer of simplicity. The proof idea is standard.

\begin{theorem}\label{simplicity-transfers}
If $T$ is simple, then (any completion of) $T^+$ is simple.
\end{theorem}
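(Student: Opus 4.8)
The plan is to verify the hypotheses of the Kim--Pillay theorem for $T^+$ by exhibiting an independence relation on (a monster model of) $T^+$ that satisfies the independence theorem over models. The natural candidate is the relation $\forkindepstar$ defined by $A \forkindepstar[D] B$ if and only if $\acl(A D) \Lalgindep[\acl(D)] \acl(B D)$, i.e.\ $\L$-algebraic independence of the model-theoretic algebraic closures, computed inside the underlying very $\L$-slim field. Since $T$ is simple and $K$ is large and model complete, $K$ is very $\L$-slim, so by the earlier lemma $\Lalgindep$ is an independence relation on the $\L$-reduct; the axioms (1)--(8) of an independence relation for $\forkindepstar$ on $T^+$ then transfer more or less formally from those for $\Lalgindep$ together with the description $\acl(A) = \acl_T(\langle A\rangle_{\L(\partial)})$ from Lemma~\ref{structure-of-acl} (which in particular makes $\forkindepstar$ automorphism-invariant and gives finite and local character, the latter inheriting the cardinal $\kappa$ from $T$). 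Invariance, monotonicity, base monotonicity, transitivity, symmetry, and normality are bookkeeping; full existence is the one axiom that needs a genuine amalgamation argument, exactly as in the proof of Lemma~\ref{structure-of-acl}.

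Concretely, for full existence, given $X$, $A$, $B$ I would first use full existence for $\Lalgindep$ in the very $\L$-slim field to find an $\L$-structure $K'$, an $\L$-elementary extension of the ambient monster, containing a copy $X'$ of $X$ over $A$ with $\langle X' A\rangle_\L$ and $B$ (more precisely their algebraic closures) $\L$-algebraically independent over $\acl(A)$; then, using that the relevant fields are linearly disjoint over the relatively algebraically closed base, amalgamate the $\D$-structures via Fact~\ref{amalgamate-D-structures}, extend the $\D$-structure to all of $K'$ by Remark~\ref{remark-ext-D-str} so that $K' \models T_\D$, and embed $K'$ into a model of $T^+$. Model completeness of $T^+$ then upgrades the $\L(\partial)$-isomorphism $X \mapsto X'$ over $A$ to an $\L(\partial)$-elementary map, so $X' \equiv_A X$ in $T^+$ and $X' \forkindepstar[A] B$ by construction. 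This is essentially the argument already run in Lemma~\ref{structure-of-acl}, now with a tuple $X$ in place of the single element $d$.

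For the independence theorem over a model $M \models T^+$: suppose $A_1 \forkindepstar[M] A_2$, $a_1 \forkindepstar[M] A_1$, $a_2 \forkindepstar[M] A_2$, and $a_1 \equiv_M a_2$. I would first pass to the underlying $\L$-reduct, where $T$ is simple and hence the independence theorem holds for nonforking, and note (by the remark after Theorem~2.1 of \cite{junker-koenigsmann} together with the very $\L$-slimness of the field) that nonforking in $T$ coincides with $\Lalgindep$, so the independence theorem holds for $\Lalgindep$ over the model $M \restriction \L$. Apply it to produce a field element $a$ (in a suitable $\L$-extension) realising $\tp_\L(a_1 / M A_1) \cup \tp_\L(a_2 / M A_2)$ with $\langle a M A_1 A_2\rangle_\L$ $\L$-algebraically independent from $A_1 A_2$ over $M$ in the appropriate sense. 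Then I need to equip the resulting field with a compatible $\D$-structure: the $\D$-structures on $\langle a M A_1\rangle_{\L(\partial)}$ and $\langle a M A_2\rangle_{\L(\partial)}$ come from the $\L(\partial)$-elementary maps witnessing the two type equalities, they agree on the common $\D$-subfield $\langle a M\rangle_{\L(\partial)}$, and linear disjointness over this relatively algebraically closed base (obtained from the $\L$-algebraic independence) lets me amalgamate them by Fact~\ref{amalgamate-D-structures}, extend to the whole field by Remark~\ref{remark-ext-D-str}, embed into a model of $T^+$ via the model companion property, and conclude by model completeness that $a \models \tp(a_1/MA_1) \cup \tp(a_2/MA_2)$ in $T^+$ with $a \forkindepstar[M] A_1 A_2$.

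The main obstacle I anticipate is the bookkeeping around algebraic closures in clause (9): I must be careful that amalgamating the $\D$-structures on $\langle a M A_1\rangle_{\L(\partial)}$ and $\langle a M A_2\rangle_{\L(\partial)}$ really does produce something into which the required types embed elementarily, and that the $\L$-algebraic-independence data coming out of the Kim--Pillay independence theorem for $T$ is strong enough to guarantee the pairwise linear disjointness needed to apply Fact~\ref{amalgamate-D-structures} (this is where relative algebraic closedness of the base, hence the passage to $\acl$ rather than $\dcl$ in the definition of $\forkindepstar$, is used). Once Kim--Pillay's criterion is met, simplicity of every completion of $T^+$ follows immediately, and $\forkindepstar$ is then nonforking independence in $T^+$.
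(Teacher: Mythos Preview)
There is a genuine gap in your argument, and it is precisely at the step you flag as an obstacle: the claim that nonforking independence in $T$ \emph{coincides} with $\Lalgindep$. The remark you cite (and the corresponding remark in this paper) only gives one direction: in a simple, very $\L$-slim field, nonforking implies $\L$-algebraic independence. The converse is neither stated nor true in general, and the Kim--Pillay uniqueness clause does not help you, since you only know that $\Lalgindep$ satisfies axioms (1)--(8), not the independence theorem. Consequently, when you arrive at the independence-theorem step for $\forkindepstar$, your hypotheses give you only $\L$-algebraic independence of the relevant closures in the $\L$-reduct, whereas applying the independence theorem in $T$ requires \emph{nonforking} hypotheses. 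The implication runs the wrong way, and the argument breaks.

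The paper's proof avoids this by defining the candidate relation the other way round: $A \forkindepplus[D] B$ if and only if $\acl(A) \forkindep[\acl(D)] \acl(B)$, where $\forkindep$ is nonforking in $T$. Now the independence-theorem hypotheses in $T^+$ unwind directly to nonforking hypotheses in $T$, so one can legitimately invoke the independence theorem there. The (correct) implication ``nonforking $\Rightarrow$ algebraic independence $\Rightarrow$ linear disjointness over a relatively algebraically closed base'' is then used purely as a tool to amalgamate the $\D$-structures via Fact~\ref{amalgamate-D-structures}; it is never needed as an equality. Your full-existence argument can be salvaged the same way: use full existence for nonforking in $T$, then pass to algebraic independence to get the linear disjointness needed for amalgamation. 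Note also that the amalgamation in clause (9) is more delicate than your sketch suggests: one cannot directly amalgamate over $\langle aM\rangle_{\L(\partial)}$ since this need not be relatively algebraically closed; the paper handles this by introducing auxiliary $\forkindep$-independent copies $N_1, N_2$ of the ambient model and amalgamating in stages over bases that are genuinely $\acl_T$-closed.
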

\begin{proof}
Let $\forkindep$ be nonforking independence for $T$. For a monster model $\mathfrak{C}$ of $T^+$ and small subsets $A,B,D$ of $\mathfrak{C}$, we define $\forkindepplus$ as follows:
\[
A \forkindepplus[D] B \iff \acl(A) \forkindep[\acl(D)] \acl(B) .
\]

We will show that $\forkindepplus$ satisfies the conditions of the Kim-Pillay theorem, and hence that $T^*$ is simple.

\emph{Invariance, monotonicity, transitivity, symmetry, finite character, local character.} These follow from the corresponding property of $\forkindep$. For \emph{normality}, the comment after Assumption~\ref{ass-L} gives $\langle BD \rangle_{\L(\partial)} = \langle B \rangle_{\L(\partial)} \cdot \langle D \rangle_{\L(\partial)}$. Then Lemma~\ref{structure-of-acl} gives $\acl(BD) \subseteq \acl_T(\acl(B), \acl(D))$.

\emph{Full existence.} Let $a, A, B$ be given inside some $M \models T^+$. We will find $a'$ in some elementary extension of $M$ with $a' \equiv_A a$ and $a' \forkindepplus[A] M$, which implies $a' \forkindepplus[A] B$ by monotonicity. We may also assume that $A = \acl(A)$, and hence that $A$ is a $\D$-field and is algebraically closed in the sense of $T$. Note that $A$ is field-theoretically, relatively algebraically closed in $M$. Since $T$ is simple, use full existence for $\forkindep$ to find a saturated and homogeneous $\L$-elementary extension $N$ of $M$ such that $M' \forkindep[A] M$ with $M' \subseteq N$ and $M' \equiv_A^{\L} M$. Use the $\L$-isomorphism over $A$ to put a $\D$-field structure on $M'$. Now $M$ and $M'$ are algebraically independent over $A$, and hence linearly disjoint over $A$ since $A \subseteq M$ is a regular extension. They then amalgamate to a $\D$-field structure on the compositum $MM'$, which we may then extend to a $\D$-field on $N$. So $N \models T_\D$ and extends to $N' \models T^+$. Let $a' \in M'$ correspond to $a \in M$. By model completeness of $T^+$, the $\L(\partial)$-isomorphism $M \to M'$ is an $\L(\partial)$-elementary map. Then $a' \equiv_A a$ in the sense of $\L(\partial)$. Since $M' \models T^+$ and $T^+$ is model complete, it is $\acl$-closed, and so $\acl(a'A) \subseteq M'$. By monotonicity, $\acl(a'A) \forkindep[\acl(A)] \acl(M)$ and so $a' \forkindepplus[A] M$.

\emph{Independence theorem.} Let $M \prec N \models T^+$, $A_1 \forkindepplus[M] A_2$, $a_1 \forkindepplus[M] A_1$, $a_2 \forkindepplus[M] A_2$, and $\tp(a_1/M) = \tp(a_2/M)$. We will show that in some elementary extension of $N$, there is $a \forkindepplus[M] A_1 A_2$ realising $\tp(a_1/A_1) \cup \tp(a_2/A_2)$. By Löwenheim--Skolem and full existence for $\forkindepplus$, we may assume that $A_1, A_2, a_1, a_2$ are all models of $T^+$ containing $M$ and contained inside $N$.

By the independence theorem for $T$, there are an $\L$-elementary, saturated, homogeneous extension $N'$ of $N$ and an element $a \in N'$ with $a \forkindep[M] A_1 A_2$ and $a \models \tp_\L(a_1/A_1) \cup \tp_\L(a_2/A_2)$. In fact, by full existence for $T$, we can ensure that $a \forkindep[M] N$.

Now for $i=1,2$, let $N_i'$ be the copy of $N$ coming from the $\L$-elementary map $A_i a_i \mapsto A_i a$. By full existence for $T$, let $N_i \equiv_{A_i a}^{\L} N_i'$ with $N_1 \forkindep[A_1 a] N$ and $N_2 \forkindep[A_2 a] NN_1$. From $a \forkindep[M] N$ we get $a \forkindep[A_1] N$ and $a \forkindep[A_2] N$. Then $N \forkindep[A_1] N_1$ and $N \forkindep[A_2] N_2$ by transitivity. From $a \forkindep[M] N$ we get $a \forkindep[A_1] A_2$, and so $A_1 a \forkindep[A_1] A_2$. Along with $A_1 \forkindep[M] A_2$, transitivity gives $A_1 a \forkindep[M] A_2$, so that $A_1 a \forkindep[a] A_2$ by base monotonicity. This implies $A_1 \forkindep[a] A_2$ and $N_1 \forkindep[a] A_2$. This last part implies $N_1 \forkindep[a] A_2 a$ and along with $N_2 \forkindep[A_2 a] N N_1$ implies $N_1 \forkindep[a] N_2$. Also, $N \forkindep[A_1 A_2] N_1$ by base monotonicity since $A_1 A_2 \subseteq N$. From $N N_1 \forkindep[A_2 a] N_2$, we get $N \forkindep[A_2 a N_1] N_2$, and hence $N \forkindep[A_2 N_1] N_2$ since $a \in N_1$. Combining this with $N \forkindep[A_1 A_2] N_1$ gives $N \forkindep[A_1 A_2] N_1 N_2$.

Now define $\D$-field structures $\partial_1$ on $N_1$ and $\partial_2$ on $N_2$ such that $(N_i, A_i, a, \partial_i)$ is $\L(\partial)$-isomorphic to $(N, A_i, a_i, \partial)$. So $N_i \models T^+$. Now since $N_1 \forkindep[a] N_2$, $N_1$ and $N_2$ are algebraically independent over $a$, and hence linearly disjoint over $a$ (since both are regular extensions of $a$) and so their $\D$-field structures can be amalgamated to the compositum $N_1 N_2$. Extend the $\D$-field structure on $N_1 N_2$ to $\acl_T(N_1 N_2) = (N_1 N_2)^\text{alg}$. From $N \forkindep[A_1 A_2] N_1 N_2$ we get $N \forkindep[\acl_T(A_1 A_2)] \acl_T(N_1 N_2)$ and hence we may amalgamate the $\D$-field structures on $\acl_T(N_1 N_2)$ and $N$ to the compositum $\acl_T(N_1 N_2) N$ and extend this to $N'$. So $N' \models T_\D$.

Extend $N'$ to $N'' \models T^+$. By model completeness of $T^+$, the $\L(\partial)$-isomorphisms $(N_i, A_i, a, \partial_i) \simeq (N, A_i, a_i, \partial)$ are $\L(\partial)$-elementary maps, and hence $a \models \tp(a_1/A_1) \cup \tp(a_2/A_2)$. From $a \forkindep[M] N$ and monotonicity, we get $a \forkindep[M] \acl(A_1 A_2)$, and so $a \forkindepplus[M] A_1 A_2$.
\end{proof}

\begin{remark}
In \cite{chatzidakis_1999}, Chatzidakis proves that for any field $F^*$, if $F \prec F^*$, $A$ and $B$ are $\acl$-closed, and $A \forkindep[F] B$, then $A$ and $B$ are linearly disjoint over $F$. This fact allows us to amalgamate $\D$-fields which are independent over models. However, in the above proofs (and in Theorem~\ref{thm-simplicity-and-ei} below), we must amalgamate $\D$-fields which are independent over arbitrary $\acl$-closed sets. Hence the notion of $\L$-slimness to facilitate this.
\end{remark}

\section{Pseudo $\D$-closed fields}\label{sec-pseudo-D-closed}

We now apply some of the results obtained in Sections~4 and 5 to the study of PAC substructures in the theory $\D\text{-CF}_0$. We continue to assume that $\D$ is local (Assumption~\ref{D-is-local}) and that we are working in a language $\L$ of the form $\Lring(C)$ (Assumption~\ref{ass-L}). Recall that a field $K$ is called pseudo algebraically closed (PAC) if every absolutely irreducible variety over $K$ has a $K$-rational point. PAC fields are large---a $K$-irreducible variety with a smooth $K$-rational point is absolutely irreducible---and a field is PAC if and only if it is existentially closed in every regular extension.

In \cite{chatzidakis_pillay}, Chatzidakis and Pillay show that the $\L$-theory of a bounded, PAC field is simple, and that if it, in addition, has finite degree of imperfection, then it eliminates imaginaries (after naming constants). Recall that a field is bounded if it has only finitely many separable algebraic extensions of each degree. Hoffman and Le\'{o}n S\'{a}nchez in \cite{hoffman-leon-sanchez} then prove the analogous results for bounded, pseudo differentially closed fields of characteristic $0$. Their result gives an example of a differential field whose theory is simple and unstable. In this section we will prove analogous results in the case of $\D$-fields.

PAC substructures of a given theory have been defined as generalisations of PAC fields in various ways. We use the definition presented in \cite{hoffman-dynamics}.

\begin{definition}
Let $T$ be an arbitrary complete $L$-theory, and $\mathfrak{C}$ a monster model. An extension of $L$-substructures $A \leq B$ of $\mathfrak{C}$ is called $L$-regular if $\dcl^\text{eq}(B) \cap \acl^\text{eq}(A) = \dcl^\text{eq}(A)$. An $L$-substructure $A$ of $\mathfrak{C}$ is called a PAC substructure if $A$ is existentially closed in every $L$-regular extension.
\end{definition}

Consider now the $\Lring(\partial)$-theory $\DCF$. This theory eliminates imaginaries (see Theorem~5.12 of \cite{moosa_scanlon_2013}), $\dcl(A)$ is the $\D$-field generated by $A$, and $\acl(A)$ is the (full) field-theoretic algebraic closure of the $\D$-field generated by $A$ (Proposition~5.5 of \cite{moosa_scanlon_2013}). Then an extension of $\D$-fields is $\Lring(\partial)$-regular exactly when the field extension is field-theoretically, relatively algebraically closed (and so regular in the field sense since we are in characteristic $0$).

We now prove three conditions equivalent to being a PAC substructure in $\DCF$.

\begin{theorem}\label{thm-pac-equivalents}
Let $(K, \partial)$ be a $\D$-field. The following are equivalent:
\begin{enumerate}
    \item[\normalfont (1)] $(K, \partial)$ is a PAC substructure in the theory $\DCF$;
    \item[\normalfont (2)] $K$ is a PAC field and $(K, \partial) \models \UCd$;
    \item[\normalfont (3)] if $(V,s)$ is a D-variety over $K$ and $V$ is absolutely irreducible, then $(V,s)$ has a $K$-rational sharp point;
    \item[\normalfont (4)] $(K, \partial)$ is existentially closed in every $\D$-field extension $(L, \delta)$ which is R-regular, that is, where $\tp^{\DCF}(a/K)$ is stationary for every finite tuple $a \in L$.
\end{enumerate}
\end{theorem}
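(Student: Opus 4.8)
The plan is to reduce conditions $(1)$ and $(4)$ to a single concrete statement $(\star)$ --- that $(K,\partial)$ is existentially closed, as a $\D$-field, in every $\D$-field extension $(L,\delta)$ for which $L/K$ is a regular field extension --- and then to prove the cycle $(\star)\Rightarrow(3)\Rightarrow(2)\Rightarrow(\star)$ using the geometric characterisations of Theorem~\ref{D-large-equivalences}. For $(1)\Leftrightarrow(\star)$ I would unwind the definition of a PAC substructure in $\DCF$ via the facts recalled above: $\DCF$ eliminates imaginaries, $\dcl(A)$ is the $\D$-field generated by $A$, and $\acl(A)$ is its field-theoretic algebraic closure. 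Hence for $\D$-fields $(K,\partial)\le(L,\delta)$ inside a monster of $\DCF$ one has $\dcl^{\mathrm{eq}}(L)\cap\acl^{\mathrm{eq}}(K)=\dcl^{\mathrm{eq}}(K)$ exactly when $L\cap K^{\mathrm{alg}}=K$, i.e.\ (characteristic $0$) when $L/K$ is regular; since any $\D$-ring embeds into its fraction field as an $\Lring(\partial)$-substructure and every $\D$-field embeds over a common $\D$-subfield into such a monster, $(1)$ is precisely $(\star)$. For $(4)\Leftrightarrow(\star)$ I would use that $\DCF$ is stable (as $\D$ is local, by the transfer of stability noted after Corollary~\ref{nip-transfer}), so $\tp^{\DCF}(a/K)$ is stationary iff $\dcl^{\mathrm{eq}}(Ka)\cap\acl^{\mathrm{eq}}(K)=\dcl^{\mathrm{eq}}(K)$; ranging over all finite $a\in L$, R-regularity of $(K,\partial)\le(L,\delta)$ is again just regularity of $L/K$, so $(4)$ is $(\star)$.

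For $(\star)\Rightarrow(3)$, given an absolutely irreducible D-variety $(V,s)$ over $(K,\partial)$ --- we may take $V$ affine, since an affine open D-subvariety is again absolutely irreducible by Lemma~\ref{open-and-irreducible-D-varieties} and a sharp point of it is one of $(V,s)$ --- I would form the fraction field $(K(V),\partial_s)$ of the $\D$-domain $(K[V],\partial_s)$; this is a $\D$-field extension of $(K,\partial)$ with $K(V)/K$ regular, and the inclusion $K[V]\hookrightarrow K(V)$ is a $\D$-ring homomorphism, i.e.\ the generic point of $V$ is a $K(V)$-rational sharp point of $(V,s)$. Thus the existential $\Lring(\partial)$-sentence over $K$ expressing ``$(V,s)$ has a sharp point'' holds in $(K(V),\partial_s)$, hence in $(K,\partial)$ by $(\star)$. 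For $(3)\Rightarrow(2)$: to get that $K$ is PAC, take an absolutely irreducible affine $K$-variety $V$; by Lemma~\ref{extending-D-structures} and Remark~\ref{remark-ext-D-str}, $\partial$ extends to a $\D$-field structure on $K(V)$ (characteristic $0$, so $K(V)/K$ is separable, and $\D$ is local, so no associated homomorphisms must be chosen), and inverting the denominators of the images of the coordinate functions yields a nonempty --- still absolutely irreducible --- open $U\subseteq V$ with a D-variety structure over $(K,\partial)$; by $(3)$, $U$ has a $K$-rational sharp point, so $V(K)\neq\emptyset$ (intersecting first with an arbitrary nonempty open gives Zariski density). So $K$ is PAC, hence large, and I would then verify condition $(3)$ of Theorem~\ref{D-large-equivalences}: if $(V,s)$ is an affine $K$-irreducible D-variety with a smooth $K$-rational point on $V$, then $V$ is absolutely irreducible (a $K$-irreducible variety with a smooth $K$-rational point is geometrically irreducible), so our $(3)$ supplies a $K$-rational sharp point; hence $(K,\partial)\models\UCd$.

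Finally $(2)\Rightarrow(\star)$ is where $\UCd$ does the work, and the argument follows the pattern of Theorem~\ref{same-ex-theory} (and Theorem~4.5 of \cite{moosa_scanlon_2013}). Let $(L,\delta)/(K,\partial)$ have $L/K$ regular and let $\bar a\in L$ witness $\exists\bar x\,\phi(\bar x)$ for a quantifier-free $\Lring(\partial)$-formula $\phi$ over $K$; take $\phi$ a conjunction of equations and fix $r$ with $\phi(\bar x)\leftrightarrow\nabla_r(\bar x)\in Z$ for a Zariski-closed $Z$ over $K$ (if $r=0$ this is the field case, which holds since $K$ is PAC and $L/K$ is regular). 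Put $X=\text{loc}(\nabla_{r-1}(\bar a)/K)$, $Y=\text{loc}(\nabla_r(\bar a)/K)$; regularity of $L/K$ makes $X$ and $Y$ absolutely irreducible, $Y\subseteq\tau X$, $\hat\pi\colon Y\to X$ is dominant, and $Y\subseteq Z$. As $K$ is PAC and we are in characteristic $0$, the smooth locus of $Y$ is a nonempty absolutely irreducible open subset, hence has a $K$-rational point, so $Y$ has a smooth $K$-rational point; applying $\UCd$ with $U=Y$ produces $a\in X(K)$ with $\nabla(a)\in Y(K)$. Writing $a=(a_\xi)_{\xi\in\Xi_{r-1}}$ and $a_0=a_{\id}$, an induction on word length --- transporting the coordinatewise relations among the entries of $\nabla_r(\bar a)$ over $K$ through $Y$ to $\nabla(a)$ --- gives $\nabla_{r-1}(a_0)=a$ and then $\nabla_r(a_0)\in Z$, so $(K,\partial)\models\phi(a_0)$; hence $(\star)$.

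The substantial step is $(2)\Rightarrow(\star)$ --- deducing existential closedness in all regular $\D$-field extensions from ``$K$ PAC and $K\models\UCd$'' --- though, given Theorem~\ref{D-large-equivalences} and the $\nabla$-bookkeeping already in place for Theorem~\ref{same-ex-theory}, this is essentially routine. The other point requiring care is the clean translation of the abstract conditions $(1)$ and $(4)$ into $(\star)$, which rests on elimination of imaginaries, the descriptions of $\acl$ and $\dcl$ in $\DCF$, and the stability of $\DCF$.
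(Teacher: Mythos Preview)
Your proof is correct, but it is organised differently from the paper's and, in one place, does more work than necessary.

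The paper does not introduce your intermediate statement $(\star)$ explicitly; instead it proves $(1)\Leftrightarrow(2)$ and $(2)\Leftrightarrow(3)$ directly, and dispatches $(1)\Leftrightarrow(4)$ by citing Lemma~3.36 of \cite{hoffman-dynamics} (together with the remark that, since $\DCF$ is stable and eliminates imaginaries, R-regular coincides with $\Lring(\partial)$-regular). Your unwinding of $(1)\Leftrightarrow(\star)$ and $(4)\Leftrightarrow(\star)$ is precisely the content behind that citation, so nothing is lost.

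The substantive difference is in $(2)\Rightarrow(\star)$. You redo the $\nabla_r$-bookkeeping from Theorem~\ref{same-ex-theory} to push an existential $\Lring(\partial)$-formula down from $(L,\delta)$ to $(K,\partial)$. The paper avoids this entirely: since $K$ is PAC and $L/K$ is regular, $K$ is existentially closed in $L$ as a field, and then characterisation~(5) of Theorem~\ref{D-large-equivalences} immediately gives that $(K,\partial)$ is existentially closed in $(L,\delta)$ as a $\D$-field. This is a one-line proof of $(2)\Rightarrow(1)$. Similarly, for $(2)\Rightarrow(3)$ the paper uses PAC to produce a smooth $K$-rational point on $V$ (via the function field) and then invokes characterisation~(3) of Theorem~\ref{D-large-equivalences}, rather than your $(\star)\Rightarrow(3)$ route through the generic sharp point. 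Conversely, for $(1)\Rightarrow(2)$ the paper obtains $\UCd$ by embedding $(K,\partial)$ into a model of $\UCd$ via Theorem~\ref{elext-large} (with $K$ elementary in it as a field, hence a regular extension) and using inductiveness, whereas you pass through $(3)$.

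In short: your argument is self-contained and correct, but the paper's is shorter because it exploits Theorem~\ref{D-large-equivalences} (especially part~(5)) as a black box rather than reproving its content.
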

\begin{proof}
(1)$\implies$(2). Let $L$ be any regular field extension of $K$, and let $\delta$ be any $\D$-structure on $L$ extending $\partial$. Then $(K, \partial)$ is existentially closed in $(L, \delta)$ as $\D$-fields, and hence $K$ is existentially closed in $L$ as fields. So $K$ is PAC. Now since $K$ is large, there is a $\D$-field extension $(F, \gamma) \models \UCd$ of $(K, \partial)$ such that $K$ is elementary in $F$ as fields. In particular, $K \subseteq F$ is regular. By (1), $(K, \partial)$ is existentially closed in $(F, \gamma)$. Since $\UCd$ is inductive, $(K, \partial) \models \UCd$.

(2)$\implies$(1). Let $(L, \delta)$ be an $\Lring(\partial)$-regular $\D$-field extension of $(K, \partial)$, so that $L/K$ is a regular field extension. Since $K$ is PAC, $K$ is existentially closed in $L$ as fields. By characterisation (5) of Theorem~\ref{D-large-equivalences}, $(K, \partial)$ is existentially closed in $(L, \delta)$ as $\D$-fields.

(2)$\implies$(3). If $V$ is absolutely irreducible, then $K$ is regular in its function field $K(V)$. $V$ has a smooth $K(V)$-rational point, and since $K$ is existentially closed in $K(V)$, $V$ has a smooth $K$-rational point. By characterisation (3) of $\UCd$ in Theorem~\ref{D-large-equivalences}, $(V,s)$ has a $K$-rational sharp point.

(3)$\implies$(2). Let $V$ be an absolutely irreducible variety defined over $K$. Extend the $\D$-field structure on $K$ to one on $K(V)$. As in the proof of Theorem~\ref{D-large-equivalences} (3) $\implies$ (1), there is an affine, open subset $U \subseteq V$ defined over $K$ such that this $\D$-ring structure restricts to one on $K[U]$. This gives a D-variety structure $s$ on $U$, making $(U,s)$ an absolutely irreducible D-variety defined over $(K, \partial)$. By (3), $(U,s)$ has a $K$-rational sharp point, and hence $V$ has a $K$-rational point. So $K$ is a PAC field. We again use characterisation (3) of Theorem~\ref{D-large-equivalences} and the fact that a $K$-irreducible variety with a smooth $K$-rational point is absolutely irreducible to get that $(K, \partial) \models \UCd$.

(1)$\iff$(4) is the content of Lemma~3.36 in \cite{hoffman-dynamics}; R-regular extensions are the same as $\Lring(\partial)$-regular extensions since $\DCF$ is stable and eliminates imaginaries.
\end{proof}

We say that a $\D$-field is pseudo $\D$-closed if any of the equivalent conditions of Theorem~\ref{thm-pac-equivalents} hold.

\begin{remark}
Apart from condition (3), this is just the $\D$-field analogue of Theorem~5.16 from \cite{leon_sanchez_differentially_2020}. There the authors need to consider differential varieties as they work with several commuting derivations. In a single derivation, it is enough to consider D-varieties; see Proposition~5.6 of \cite{pillay-polkowska-pac} for instance.
\end{remark}

Theorem~5.2 of \cite{hoffman-leon-sanchez} states that the theory of a bounded, pseudo differentially closed field (that is, a PAC substructure of $\text{DCF}_{0,m}$) is simple and eliminates imaginaries. We will now prove the $\D$-field analogue. Let $(K, \partial)$ be a bounded, pseudo $\D$-closed field. For each $n>1$, let $N(n)$ be the degree over $K$ of the Galois extension composite of all Galois extensions of $K$ of degree $n$. Let $C = (c_{n,i})_{n>1, 0 \leq i < N(n)}$ be the set of constant symbols in our language $\L = \Lring(C)$, and consider the set of $\L$-sentences $\Sigma_C = \{ \sigma_n \colon n > 1 \}$ where $\sigma_n$ says that the polynomial $x^{N(n)} + c_{n, N(n)-1} x^{N(n)-1} + \ldots + c_{n,0}$ is irreducible and the extension this polynomial defines is Galois and contains all Galois extensions of $K$ of degree $n$. This is the same set-up used by Chatzidakis and Pillay in Section~4 of \cite{chatzidakis_pillay} in their treatment of bounded, PAC fields. Let $T^+ = \Th(K, \partial) \cup \Sigma_C$. Note then that $T^+ \supseteq \Th_{\Lring}(K) \cup \Sigma_C \cup \UCd$.

For the next two proofs, we will at times need to refer to notions in both the sense of $T^+$ and the sense of $\DCF$. In the second case, we will always include this as a superscript; if no superscript is given, the notion should be understood in the sense of whatever model of $T^+$ we are working in. The full, field-theoretic algebraic closure of $A$ is denoted by $\tilde{A}$, and $A^\text{alg}$ denotes the relative, field-theoretic algebraic closure of $A$. If $A \supseteq C$, then $A^\text{alg}$ is equal to $\acl_{\L}(A)$ since models of $T^+$ are very $\L$-slim.

\begin{lemma}\label{lemma-pac-acl-is-compositum}
Let $(F, \partial, C) \models T^+$, and $A \leq B \leq F$ with $A$ $\acl$-closed in the sense of $T^+$. Then
\[
\acl^{\D\text{\normalfont -CF}_0}(B) = \acl(B) \cdot \acl^{\D\text{\normalfont -CF}_0}(A).
\]
\end{lemma}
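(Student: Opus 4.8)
The plan is to reduce the stated identity to a purely field-theoretic equality about algebraic extensions, and then to settle that equality by a short Galois-theoretic argument whose essential ingredient is the axiom scheme $\Sigma_C$.

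First the reductions. Write $B_0 = \langle B \rangle_{\L(\partial)}$ for the $\D$-field generated by $B$; since $B$ contains the constants $C$, this is also the $\D$-field generated by $B$ in the sense of $\DCF$, and it contains the $\D$-field $F_0 = \langle \emptyset \rangle_{\L(\partial)}$ generated by $C$. As $T^+$ has the form $T \cup \UCd$ for a model complete theory $T = \Th_{\Lring}(K) \cup \Sigma_C$ of a large field, Lemma~\ref{structure-of-acl} together with the very $\L$-slimness of models of $T^+$ gives $\acl(B) = \acl_T(B_0) = B_0^{\text{alg}}$, the relative field-theoretic algebraic closure of $B_0$ in $F$. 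By Proposition~5.5 of \cite{moosa_scanlon_2013}, $\acl^{\DCF}(B) = \widetilde{B_0}$; and since $A$ is $\acl$-closed---hence a $\D$-subfield of $F$ that is relatively algebraically closed in $F$ and contains $F_0$---also $\acl^{\DCF}(A) = \widetilde{A}$. (Here $\acl^{\DCF}$ is computed in $\widetilde{F}$, which is a model of $\DCF$ by Section~\ref{sec-alt-characterisations} since it is an algebraic extension of the large model $F$ of $\UCd$.) By Remark~\ref{remark-ext-D-str} the $\D$-structure on $B_0$ extends uniquely to $\widetilde{B_0}$, and by uniqueness the subfields $B_0^{\text{alg}}$ and $\widetilde{A}$ are $\D$-subfields of $\widetilde{B_0}$; so, working inside a fixed algebraic closure of $F$, the claim reduces to the field-theoretic equality $\widetilde{B_0} = B_0^{\text{alg}} \cdot \widetilde{A}$. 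Note $\widetilde{F_0} \subseteq \widetilde{A}$ since $F_0 \subseteq A$.

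For this equality the inclusion $\supseteq$ is immediate as $B_0^{\text{alg}}, \widetilde{A} \subseteq \widetilde{B_0}$, so it suffices to prove that $\mathrm{Gal}(\widetilde{B_0}/ B_0^{\text{alg}} \cdot \widetilde{A})$ is trivial. First I would unwind $\Sigma_C$: since $F \models \Sigma_C$ and $F$ is bounded, for each $n>1$ the compositum of all degree-$n$ Galois extensions of $F$ equals $F \cdot E_n$, where $E_n = F_0(\theta_n) \subseteq \widetilde{F_0}$ is generated by a root $\theta_n$ of the polynomial $x^{N(n)} + c_{n,N(n)-1} x^{N(n)-1} + \cdots + c_{n,0}$; hence, putting $E = \bigcup_{n>1} E_n \subseteq \widetilde{F_0} \subseteq \widetilde{A}$, every finite extension of $F$ lies in $F \cdot E$, so (characteristic $0$) $\widetilde{F} = F \cdot E$. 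Next, since $B_0^{\text{alg}}$ is relatively algebraically closed in $F$, restriction to $\widetilde{B_0}$ gives a surjection $\mathrm{Gal}(\widetilde{F}/F) \twoheadrightarrow \mathrm{Gal}(\widetilde{B_0}/B_0^{\text{alg}})$. Given $\sigma \in \mathrm{Gal}(\widetilde{B_0}/B_0^{\text{alg}} \cdot \widetilde{A})$, lift it to $\widetilde{\sigma} \in \mathrm{Gal}(\widetilde{F}/F)$. Then $\widetilde{\sigma}$ fixes $F$ pointwise, and as $\widetilde{\sigma}|_{\widetilde{B_0}} = \sigma$ fixes $\widetilde{A} \supseteq E$, it fixes $E$ pointwise; hence $\widetilde{\sigma}$ fixes $F \cdot E = \widetilde{F}$, so $\widetilde{\sigma} = \mathrm{id}$ and $\sigma = \mathrm{id}$. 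This would give $\widetilde{B_0} = B_0^{\text{alg}} \cdot \widetilde{A}$ and finish the argument.

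The only nonroutine step I anticipate is extracting from the axioms $\sigma_n$ together with boundedness the equality $\widetilde{F} = F \cdot E$ with $E$ algebraic over $F_0$---that is, that every algebraic extension of $F$ is already ``defined over the constants''. Everything else is either standard Galois theory of regular extensions or bookkeeping that only invokes results already in the paper (Lemma~\ref{structure-of-acl}, Remark~\ref{remark-ext-D-str}, the large-$\D$-field equivalences of Section~\ref{sec-alt-characterisations}, and Proposition~5.5 of \cite{moosa_scanlon_2013}).
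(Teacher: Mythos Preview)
Your proposal is correct and follows essentially the same route as the paper: both reduce to the field-theoretic identity $\widetilde{B_0} = B_0^{\mathrm{alg}} \cdot \widetilde{A}$ and then finish with the Galois-theoretic observation that the axioms $\Sigma_C$ force every algebraic extension of $F$ to be generated over $F$ by elements algebraic over the constant field. The only difference is packaging: the paper invokes Proposition~4.6(2) of \cite{chatzidakis_pillay} to conclude that the restriction maps $\mathrm{Gal}(F) \to \mathrm{Gal}(A)$ and $\mathrm{Gal}(F) \to \mathrm{Gal}(\acl(B))$ are isomorphisms, whereas you unwind that citation inline by exhibiting $\widetilde{F} = F \cdot E$ with $E \subseteq \widetilde{F_0}$ and then lifting through the surjection $\mathrm{Gal}(\widetilde{F}/F) \twoheadrightarrow \mathrm{Gal}(\widetilde{B_0}/B_0^{\mathrm{alg}})$.
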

\begin{proof}
Since $B$ is a $\D$-field containing $C$ and $F$ is very $\L$-slim, $\acl(B) = B^\text{alg}$, and $\acl^{\D\text{\normalfont -CF}_0}(B) = \tilde{B}$. So we need to show $\tilde{B} = B^\text{alg} \cdot \tilde{A}$. The proof of Proposition~4.6(2) of \cite{chatzidakis_pillay} shows that the restriction maps $\text{Gal}(F) \to \text{Gal}(A)$ and $\text{Gal}(F) \to \text{Gal}(\acl(B))$ are isomorphisms, and hence the restriction map $\text{Gal}(\acl(B)) \to \text{Gal}(A)$ is an isomorphism. Therefore, any automorphism of $\tilde{B}$ that fixes $B^\text{alg} \cdot \tilde{A}$ must also fix $\tilde{B}$. Since we are in characteristic $0$, $\tilde{B} / B^\text{alg} \cdot \tilde{A}$ is a Galois extension, and so $\tilde{B} = B^\text{alg} \cdot \tilde{A}$.
\end{proof}

\begin{theorem}\label{thm-simplicity-and-ei}
Let $(F, \partial, C) \models T^+$ and $(E, \partial, C) \subseteq (F, \partial, C)$. Then
\begin{enumerate}
    \item[\normalfont (1)] $\acl(E) = E^\text{\normalfont alg}$;
    \item[\normalfont (2)] if $E = \acl(E)$, then $T^+ \cup \diag(E)$ is complete;
    \item[\normalfont (3)] $T^+$ is model complete;
    \item[\normalfont (4)] the independence theorem holds for $T^+$ over algebraically closed sets;
    \item[\normalfont (5)] $T^+$ is simple and forking is given by forking independence in $\D\text{\normalfont -CF}_0$;
    \item[\normalfont (6)] $T^+$ has elimination of imaginaries.
\end{enumerate}
\end{theorem}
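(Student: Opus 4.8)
The plan is to mirror the proof of Theorem~5.2 of \cite{hoffman-leon-sanchez}, which in turn follows Section~4 of \cite{chatzidakis_pillay}, replacing differential varieties by D-varieties throughout and feeding in the $\D$-field results of the previous sections wherever \cite{hoffman-leon-sanchez} uses their differential counterparts. The first task is a handful of structural remarks about models of $T^+$. Since being PAC is preserved under elementary equivalence, and $\Sigma_C$ names, for each $n>1$, a defining polynomial for the compositum of all degree-$n$ Galois extensions, every $(F,\partial,C)\models T^+$ is a bounded PAC field whose absolute Galois group is coordinatised by $C$ in the sense of \cite{chatzidakis_pillay}; moreover $F\models\UCd$, so by Theorem~\ref{thm-pac-equivalents} the $\D$-field $(F,\partial)$ is pseudo $\D$-closed. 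As recorded just before the statement, any $\L(\partial)$-substructure $A$ contains $C$, so very $\L$-slimness gives $\acl(A)=A^\text{alg}$ while $\acl^{\DCF}(A)=\tilde A$ by Proposition~5.5 of \cite{moosa_scanlon_2013}; these are linked by Lemma~\ref{lemma-pac-acl-is-compositum}.

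\textbf{The embedding lemma, and items (1)--(3).} The technical core is the $\D$-field analogue of Proposition~4.6 of \cite{chatzidakis_pillay}: if $(F_1,\partial,C)$ and $(F_2,\partial,C)$ are models of $T^+$ with a common $\L(\partial)$-substructure $E=\acl(E)$, then $(F_1,\partial,C)\equiv_E(F_2,\partial,C)$. One proves this by a back-and-forth whose building block is that a finite $\L(\partial)$-configuration over $E$ — a generic point of an affine D-variety over $E$ whose field of definition is regular over $E$ — is realised in an elementary extension of $F_i$, because $F_i$ is pseudo $\D$-closed, hence existentially closed in every $\Lring(\partial)$-regular $\D$-field extension (Theorem~\ref{thm-pac-equivalents}(3)--(4)); here Lemma~\ref{lemma-pac-acl-is-compositum} keeps control of algebraic closures, and the coordinatisation of the Galois group via $\Sigma_C$ upgrades quantifier-free $\L(\partial)$-types over $E$ to complete types. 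Granting this embedding lemma: (1) is proved as in Lemma~\ref{structure-of-acl} — the inclusion $E^\text{alg}\subseteq\acl(E)$ is clear, and for $d\notin E^\text{alg}$ one uses full existence for $\Lalgindep$ to produce a copy $F'$ of $F$ over $E^\text{alg}$ with $F'\Lalgindep[E^\text{alg}]F$, amalgamates the $\D$-structures via Fact~\ref{amalgamate-D-structures} and Remark~\ref{remark-ext-D-str}, and extends to a model of $T^+$ using the embedding lemma, so the automorphism swapping the two copies moves $d$, and iterating yields infinitely many realisations of $\tp(d/E)$. Item (2) is the embedding lemma applied with a common substructure $E=\acl(E)$. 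For (3) we use Robinson's test: if $F_1\subseteq F_2$ are models of $T^+$, then boundedness together with $\Sigma_C$ forces $F_1$ to be relatively algebraically closed in $F_2$ (an extension $F_1(\beta)\subseteq F_2$ with $\beta$ algebraic of minimal positive degree over $F_1$ would be a Galois subextension of a coordinatising extension named by $C$, which is irreducible over both $F_1$ and $F_2$ by $\Sigma_C$ — impossible), so $F_1=\acl(F_1)$ inside $F_2$ by (1), and (2) gives $F_1\preceq F_2$.

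\textbf{Items (4)--(6).} For these we repeat the amalgamation arguments of Section~\ref{sec-transfer}, now working relative to $\DCF$: since $\DCF$ is stable and eliminates imaginaries (Theorem~5.12 of \cite{moosa_scanlon_2013}), its nonforking independence is symmetric, stationary, and satisfies the independence theorem automatically. For (4), given an independence-theorem configuration over $M\prec N\models T^+$, one amalgamates the relevant $\D$-fields inside a monster model of $\DCF$ — the amalgam exists by the independence theorem in $\DCF$, using Lemma~\ref{lemma-pac-acl-is-compositum} to identify $\acl^{\DCF}$ of the amalgamated base with the compositum of its $T^+$-algebraic closure and $\acl^{\DCF}$ of $M$ — and then, exactly as in the proof of Theorem~\ref{simplicity-transfers} but invoking (3) and Theorem~\ref{thm-pac-equivalents}(4) in place of model completeness and the model-companion property of $T\cup\UCd$, one checks that the amalgam embeds in a model of $T^+$, whose third vertex is the required common realisation. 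Then (5) follows from the Kim--Pillay criterion \cite{kim-pillay-simple} applied to the relation $A\forkindepplus[D]B$ defined by $\acl^{\DCF}(A)\forkindep[\acl^{\DCF}(D)]\acl^{\DCF}(B)$, with nonforking computed in $\DCF$, the remaining axioms being inherited from $\DCF$ as in Theorem~\ref{simplicity-transfers}; this also identifies forking in $T^+$ with nonforking independence in $\DCF$. Finally (6) is obtained as in Section~6 of \cite{chatzidakis_pillay} and \cite{hoffman-leon-sanchez}: since $F$ is bounded with Galois group named by $C$ and $\DCF$ eliminates imaginaries, any imaginary of $T^+$ is interdefinable with a real tuple, there being no obstruction from imperfection in characteristic $0$.

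\textbf{Main obstacle.} The hard part is the embedding lemma: one must juggle three kinds of data simultaneously — the field-theoretic regularity needed to apply pseudo $\D$-closedness, the Galois-theoretic information needed (via $\Sigma_C$) to promote quantifier-free types to complete types, and the $\D$-operator — while ensuring at every stage of the back-and-forth that the structures produced remain bounded PAC fields with the coordinatising Galois group and continue to satisfy $\UCd$, hence remain embeddable in models of $T^+$. Once this is in hand, (1)--(6) amount to combining it with Lemma~\ref{lemma-pac-acl-is-compositum}, Fact~\ref{amalgamate-D-structures}, Theorem~\ref{thm-pac-equivalents}, and the stability and elimination of imaginaries of $\DCF$.
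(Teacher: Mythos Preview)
Your overall route differs substantially from the paper's, and while it is plausible it is far more laborious and leaves the hardest step under-specified.

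The paper does \emph{not} prove a new embedding lemma by back-and-forth. Instead it observes that $T^+ \supseteq \Th_{\Lring}(K)\cup\Sigma_C\cup\UCd$, where $\Th_{\Lring}(K)\cup\Sigma_C$ is exactly the Chatzidakis--Pillay theory of a bounded PAC field with named Galois group, and then applies the uniform-companion transfer theorems from Section~\ref{sec-uc} as black boxes. Concretely: (1) is Lemma~\ref{structure-of-acl} verbatim; (2) is Proposition~4.6(2) of \cite{chatzidakis_pillay} fed into Theorem~\ref{uc-first-theorem}; (3) is Proposition~4.6(1) of \cite{chatzidakis_pillay} fed into Theorem~\ref{UC-consequences}(i); (4) and (5) come from running the proof of Theorem~\ref{simplicity-transfers} with $T=\Th_{\Lring}(K)\cup\Sigma_C$, whose simplicity and independence theorem over $\acl$-closed sets are already provided by \cite{chatzidakis_pillay}, after which Lemma~\ref{lemma-pac-acl-is-compositum} identifies this with $\DCF$-independence. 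You instead propose to redo the Chatzidakis--Pillay back-and-forth directly at the level of $\D$-fields, following \cite{hoffman-leon-sanchez} step by step; this is what one would do if the uniform companion were \emph{not} available (as indeed it is not in the commuting-derivations setting of \cite{hoffman-leon-sanchez}), but here it bypasses the whole point of Sections~\ref{sec-uc} and \ref{sec-transfer}. Your approach buys self-containment; the paper's buys a two-line proof for each of (1)--(5).

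There is also a genuine gap in your treatment of (6). You write that ``any imaginary of $T^+$ is interdefinable with a real tuple'' because $\DCF$ eliminates imaginaries and the Galois group is named --- but this is the conclusion, not an argument. The paper's proof of (6) is the only part that does not reduce to a one-line citation: it uses the $\D$-dimension $\dim_\D(a/K)=(\trdeg(\nabla_r(a)/K):r<\omega)$ from \cite{moosa_scanlon_2013} to detect forking, finds a realisation $u$ of $\tp(a/E)$ with $f(u)=e$ and $u\forkindep[E]a$, and then runs the case split of \cite{hoffman-leon-sanchez} using the independence theorem over $\acl$-closed sets (your (4)) to derive a contradiction in the bad case. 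Your sketch omits this entirely. Separately, in your proof of (1) the phrase ``extends to a model of $T^+$ using the embedding lemma'' conflates two distinct steps: extending the amalgam to a model of $T^+$ requires the model-companion property (Section~\ref{sec-uc}), while the embedding lemma is what makes that extension elementary over $E$.
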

\begin{proof}
(1). By Lemma~\ref{structure-of-acl} since $F$ is very $\L$-slim (it is model complete, large, and characteristic $0$).

(2). By Proposition~4.6(2) of \cite{chatzidakis_pillay}, $\Th_{\Lring}(K) \cup \Sigma_C \cup \diag_{\L}(E)$ is complete. Then Theorem~\ref{uc-first-theorem} tells us that $\Th_{\Lring}(K) \cup \Sigma_C \cup \UCd \cup \diag(E)$ is complete. So $T^+ \cup \diag(E)$ is complete.

(3). By Theorem~\ref{UC-consequences}(i) since $\Th_{\Lring}(K) \cup \Sigma_C$ is model complete (Proposition~4.6(1) of \cite{chatzidakis_pillay}).

(4). This follows from the proof of Theorem~\ref{simplicity-transfers} and the fact that the independence theorem over algebraically closed sets holds for bounded PAC fields (Theorem~4.7 of \cite{chatzidakis_pillay}). Note that the proof of Theorem~\ref{simplicity-transfers} only uses the fact that $M$ is a model if the independence theorem for $T$ holds only for models.

(5). By Theorem~\ref{simplicity-transfers} and the corresponding result for bounded PAC fields (Corollary~4.8 of \cite{chatzidakis_pillay}) we know that $T^+$ is simple and forking independence is given by linear disjointness after closing under $\acl$---the relative algebraic closure of the $\D$-field it generates. We can then use general properties of linear disjointness of regular extensions, along with Lemma~\ref{lemma-pac-acl-is-compositum}, to show that, if $A$, $B$, and $D$ are all $\acl$-closed, then $A$ and $B$ are linearly disjoint over $D$ if and only if $\tilde{A}$ is linearly disjoint from $\tilde{B}$ over $\tilde{D}$. This is precisely forking independence in $\D\text{-CF}_0$ (see Theorem~5.9 of \cite{moosa_scanlon_2013}).

(6). This proof is essentially a combination of Theorem~5.12 of \cite{moosa_scanlon_2013} and Theorem~5.6 of \cite{hoffman-leon-sanchez}. Nonetheless, some details will be provided. We will assume that $(F, \partial, C)$ is a monster model of (some completion of) $T^+$, and that $(\mathfrak{D}, \partial)$ is a monster model of $\D\text{-CF}_0$ extending it. We write $\forkindep$ for nonforking independence in $(F, \partial, C)$. If we omit a superscript from an operator, we mean in the sense of $(F, \partial, C)$. 

We need the notion of dimension from Definition~5.10 of \cite{moosa_scanlon_2013}. If $K$ is a $\D$-field, then $\dim_\D(a/K) = (\trdeg(\nabla_r(a)/K) \colon r < \omega) \in \omega^\omega$, where $\nabla_r(a)$ is the tuple applying words of length at most $r$ on $\{ \partial_1, \ldots, \partial_l \}$ to $a$. Note that $\dim_\D(a/K) = \dim_\D(a/\tilde{K})$. Using Lemma~5.11 of \cite{moosa_scanlon_2013}, we then get that if $L/k$ is a regular extension, $\dim_\D(a/k) = \dim_\D(a/L)$ if and only if $\acl^{\D\text{-CF}_0}(ka)$ is linearly disjoint from $\tilde{L}$ over $\tilde{k}$ if and only if $a \forkindep[k] L$.

Let $e \in (F, \partial, C)^\text{eq}$ given by a $0$-definable function $f$ and a finite real tuple $a \in F$, that is, $f(a) = e$. Let $E = \acl^{\text{eq}}(e) \cap F$ and let $Q$ be the set of realisations of $\tp(a/E)$. Having established that $\dim_\D$ measures nonforking independence in $T^+$, the same proof as in Theorem~5.12 of \cite{moosa_scanlon_2013} allows us to find some $u \in Q$ such that $f(u) = e$ and $u \forkindep[E] a$.

We now follow the rest of the argument in Theorem~5.6 of \cite{hoffman-leon-sanchez}. Let $D = \{ d \in Q \colon f(d) = e \}$. If $D=Q$, then $e \in \dcl^\text{eq}(E)$ and we get weak elimination of imaginaries.

If $D \subsetneq Q$, let $d_0 \in Q \setminus D$ and $d \equiv_E d_0$ with $d \forkindep[E] D$. If $f(d) = e$, then $d \in D$ and hence $d \in \acl(E) = E$. So $d \in \acl^\text{eq}(e)$. Since $f(d)=e$, $e \in \dcl^\text{eq}(d)$, and we get weak elimination of imaginaries.

So assume $f(d) \not = e$. Now $u \equiv_E d$, $u \forkindep[E] a$, and $u \forkindep[E] d$. By the independence theorem over algebraically closed sets, we get $m \models \tp(u/Ea) \cup \tp(d/Eu)$ with $m \forkindep[E] au$. But this contradicts $f(d) \not = e$. Finally, since we are in a theory of fields and we have weak elimination of imaginaries, we have elimination of imaginaries.
\end{proof}

\section{The non-local case}\label{sec-non-local}

Recall that, throughout Sections~3--6, we assumed that either the $k$-algebra $\D$ was a local ring or each component in its local decomposition had residue field $k$. In this section we make some observations about the existence of model companions of $\D$-fields in the case when neither assumption holds. Without Assumption~\ref{res-field-k} the associated homomorphisms of a $\D$-field are not necessarily \emph{endo}morphisms, and hence it does not make sense to ask whether $T \cup \text{``$\D$-fields''}$ has a model companion when $T$ is a theory of difference fields. However, it does make sense to ask the question as $T$ varies over theories of fields. The main result of this section says that  when the base field $k$ is finitely generated over $\Q$, we get a full characterisation of when the uniform companion for large $\D$-fields exists: it exists if and only if $\D$ is local.

We start with the general case: $k$ is a field of characteristic $0$, $\D$ is a finite-dimensional $k$-algebra, and $\D = \prod_{i=0}^t B_i$ where each $B_i$ is a local, finite-dimensional $k$-algebra. We no longer impose Assumption~\ref{D-is-local} (that $\D$ is local) or even Assumption~\ref{res-field-k} (that the residue field of each $B_i$ is $k$). For $i>0$, the residue field of $B_i$ is $k[x]/(P_i)$ for some $k$-irreducible polynomial $P_i$, and that of $B_0$ is $k$. For an $\L$-theory $T$, the $\L(\partial)$-theory $T \cup \text{``$\D$-fields''}$ is denoted by $T_\D$, and the $\L(\sigma)$-theory $T \cup \text{``$\sigma$ is an endomorphism''}$ is denoted by $T_{\sigma}$.

A result of Kikyo and Shelah \cite{Kikyo2002TheSO} states that if a model complete theory has the strict order property, then the theory obtained by adding an automorphism has no model companion. In particular, if $\D = k \times k$, $\D$-fields correspond to fields with an endomorphism, and so $\RCF_\D = \RCF_\sigma$ and $\Th(\Q_p)_\D = \Th(\Q_p)_\sigma$ have no model companion. In fact, the Kikyo--Shelah theorem implies that $T_\D$ has no model companion when $\D$ is not local and $T$ has a model in which one of the polynomials $P_i$ has a root. We first prove this for the case when $\D$ has at least one local component with residue field $k$, and then reduce the more general statement to this case.

\begin{proposition}\label{D-mc-implies-sigma-mc}
Assume $\D$ is such that one of the local components $B_i$ has residue field $k$ for $i>0$. If $T_\D$ has a model companion, then $T_{\sigma}$ has a model companion.
\end{proposition}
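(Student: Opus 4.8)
The strategy is to realise difference fields as genuine reducts of $\D$-fields and transport the model companion along this reduct. Since $B_j$ has residue field $k$ for the given $j>0$, the composite $\pi_j\colon\D\to B_j\to k$ is a $k$-algebra homomorphism; together with $\pi_0=\pi$ it yields a surjective $k$-algebra homomorphism $\rho=(\pi_0,\pi_j)\colon\D\to k\times k$. Writing $e_j\in\D$ for the idempotent with $\D e_j=B_j$, one has $\rho(e_j)=(0,1)$ and $\rho(1-e_j)=(1,0)$, so $\rho$ is onto and $\iota\colon k\times k\to\D$, $(a,b)\mapsto a(1-e_j)+be_j$, is a $k$-algebra section of $\rho$ with $\pi_0\iota=\mathrm{pr}_1$. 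Now a $(k\times k)$-field is exactly a field with one $k$-linear endomorphism. Composing a $\D$-field structure with $\id_F\otimes\rho$ recovers the associated difference field $(F,\sigma_j)$; since $\sigma_j$ is a $k$-linear combination of $\id,\partial_1,\ldots,\partial_l$ --- hence given by an $\L(\partial)$-term --- this is an honest reduct, and we may regard $\L(\sigma)$ as a sublanguage of $\L(\partial)$. Dually, composing a difference-field structure $\sigma'\colon F\to F\times F$, $r\mapsto(r,\sigma(r))$, with $\id_F\otimes\iota$ produces a $\D$-field $\iota(F,\sigma):=(F,\partial^\iota)$ with $\partial^\iota_m(x)=\lambda_m x+\mu_m\sigma(x)$, where $1-e_j=\sum_m\lambda_m\varepsilon_m$ and $e_j=\sum_m\mu_m\varepsilon_m$. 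This $\iota$-expansion is functorial and preserves embeddings, its associated difference field is again $(F,\sigma)$, it sends models of $T_\sigma$ to models of $T_\D$, and --- a point used below --- its associated homomorphisms other than $\id=\sigma_0$ and $\sigma=\sigma_j$ are merely the canonical inclusions $F\hookrightarrow F[x]/(P_i)$.

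Let $T_\D^*$ be the model companion of $T_\D$. I would take as the candidate model companion $T_\sigma^*$ of $T_\sigma$ the $\L(\sigma)$-theory of the associated difference fields of models of $T_\D^*$, and verify that it is model complete and has the same universal consequences as $T_\sigma$. Model completeness is the manageable half: given an embedding $(F_1,\sigma_1)\subseteq(F_2,\sigma_2)$ of models of $T_\sigma^*$, one realises it as the associated-difference-field reduct of an embedding $(H_1,\partial_1)\subseteq(H_2,\partial_2)$ of models of $T_\D^*$, whereupon model completeness of $T_\D^*$ makes the latter $\L(\partial)$-elementary and $\L(\sigma)$-elementarity descends to the reduct. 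Mutual model-consistency reduces --- using the $\iota$-expansion to pass from $T_\sigma$-models to $T_\D$-models and then embedding into models of $T_\D^*$, and conversely noting that a model of $T_\sigma^*$ is already a model of $T_\sigma$ (being a difference field is universal, and the field reduct of an existentially closed model of $T_\D$ is existentially closed for $T$, hence a model of $T$ when $T$ is model complete --- in general one works throughout with existentially closed models of $T$) --- to the single assertion: \emph{the associated difference field $(H,\tau)$ of a model $(H,\partial)$ of $T_\D^*$ is an existentially closed model of $T_\sigma$.}

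This assertion is the crux and, I expect, the main obstacle. The plan for it: given a difference-field extension $(H,\tau)\subseteq(M,\mu)$ with $(M,\mu)\models T_\sigma$ in which some existential $\L(\sigma)$-statement over $H$ holds, replace $(M,\mu)$ by a sufficiently saturated $\L(\sigma)$-elementary extension --- still a model of $T_\sigma$, so $M\models T$ --- and extend $\partial$ from $H$ to a $\D$-field structure on $M$ by Lemma~\ref{extending-D-structures}: in characteristic $0$ the extension $M/H$ is $0$-smooth, so it suffices to extend the associated homomorphisms of $(H,\partial)$, namely $\id$ (by $\id$), $\tau=\sigma_j$ (by $\mu$), and the $\sigma_i\colon H\to H[x]/(P_i)$ for $i\ne 0,j$ (to maps $M\to M[x]/(P_i)$). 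The first two are immediate; the third is the delicate point, to be handled by realising the finitely many induced field embeddings inside a monster algebraically closed field and extending them by homogeneity together with saturation of $M$ as a model of $T$, while checking that the images return to $M[x]/(P_i)$. The resulting $\D$-field models $T_\D$, extends $(H,\partial)$, and realises the statement; since $(H,\partial)$ is existentially closed for $T_\D$, the statement holds already in $(H,\partial)$, hence in $(H,\tau)$. Making this associated-homomorphism extension precise, and supplying the uniformity needed for the model-completeness step, is what I would regard as the technical heart of the proof.
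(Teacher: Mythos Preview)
Your overall strategy---take the $\L(\sigma)$-reduct of $T_\D^*$ as the candidate model companion of $T_\sigma$ and verify model completeness plus coincidence of universal theories---is the same as the paper's. Your $\iota$-expansion is exactly the $\D$-structure the paper uses: its $j$th associated homomorphism is $\sigma$ and all others are the canonical inclusions $F\hookrightarrow F[x]/(P_i)$, and it is functorial in embeddings of difference fields.

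Where you diverge is in what you identify as the ``crux''. You try to show that the $\L(\sigma)$-reduct of an \emph{arbitrary} model $(H,\partial)\models T_\D^*$ is existentially closed in $T_\sigma$, and for this you must extend whatever associated homomorphisms $(H,\partial)$ happens to have---your ``delicate point''. The paper never does this. It works instead with an arbitrary model $(K,\sigma)\models T^-$, equips it with the \emph{specific} nice $\D$-structure (your $\iota$-expansion), and embeds the result in some $L\models T_\D^*$. Because the non-$\sigma$ associated homomorphisms of this structure are merely inclusions, extending them to any $\L(\sigma)$-extension is trivial; the delicate point evaporates. You already have the right construction in hand---you just deploy it in the wrong direction, feeding $T_\sigma$-models into $T_\D$ rather than feeding $T^-$-models into $T_\D$ for the model-completeness argument.

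Your plan for model completeness---realising an embedding $(F_1,\sigma_1)\subseteq(F_2,\sigma_2)$ of $T_\sigma^*$-models as the reduct of an embedding of $T_\D^*$-models---also faces a genuine obstacle you do not address: even when $F_1,F_2$ are literally reducts of $T_\D^*$-models $H_1,H_2$, there is no reason $H_1\subseteq H_2$. The paper's route is more direct: for each $K\models T^-$ it argues that $T^-\cup\diag_{\L(\sigma)}(K)$ is complete by exhibiting it as the $\L(\sigma)(K)$-reduct of the complete theory $T_\D^*\cup\diag_{\L(\partial)}(L)$, where $L\models T_\D^*$ is obtained from the nice expansion of $K$. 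This bypasses both your delicate extension of associated homomorphisms and the embedding-lifting problem.
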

\begin{proof}
Note that by a particular choice of the basis $\varepsilon_0, \ldots, \varepsilon_l$, we may assume that the associated endomorphism $\sigma_i$ corresponding to $B_i$ appears as one of the operators $\partial_j$. So $\L(\sigma) \subseteq \L(\partial)$.

Write $T^+$ for the model companion of $T_\D$ and $T^-$ for its reduct to $\L(\sigma)$. We will show that $T^-$ is the model companion of $T_{\sigma}$; clearly their universal parts coincide, so it suffices to prove $T^-$ is model complete.

Let $(K, \sigma) \models T^-$. We will show that $T^- \cup \diag_{\L(\sigma)}(K)$ is complete. Use Lemma~\ref{extending-D-structures} to equip $K$ with a $\D$-ring structure whose $i$th associated homomorphism is $\sigma$ and whose $j$th associated homomorphism is inclusion $K \to K[x]/(P_j)$ for $j \not = i$. Then $K \models T_\D$, and it embeds in some $L \models T^+$. Since $T^+$ is model complete, $T^+ \cup \diag_{\L(\partial)}(L)$ is complete, and hence its reduct to $\L(\sigma)(K)$, $T^- \cup \diag_{\L(\sigma)}(K)$, is complete.
\end{proof}

We now weaken the assumption that the residue field of some $B_i$ is $k$ to the assumption that $T$ has a model $K$ in which one of the polynomials $P_i$ has a root. If $T_\D$ has a model companion, then $T_\D \cup \diag(K)$ has a model companion. Let $\mathcal{E}$ be the $K$-algebra $\D \otimes_k K$. As mentioned in the proof of Theorem~3.2 of \cite{ozlem_2018}, if $L$ is an $\mathcal{E}$-field, then $\mathcal{E}$-field extensions of $L$ coincide with $\D$-field extensions of $L$. Hence if $T_\D \cup \diag(K)$ has a model companion, so does $T_{\mathcal{E}}$. But $\mathcal{E}$ now satisfies the assumption in Proposition~\ref{D-mc-implies-sigma-mc}. So we have proved the following.

\begin{corollary}\label{thm-modifies-kikyo-shelah}
If $T$ is model complete and has a model with the strict order property in which one of the $P_i$ has a root, then $T_\D$ has no model companion.
\end{corollary}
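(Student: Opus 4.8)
The plan is to recapitulate the chain of reductions set up in the discussion above and then close it off with the Kikyo--Shelah theorem; I would argue by contradiction, assuming $T_\D$ has a model companion. The first step is to pass to the language with a constant for each element of $K$: equip $K$ with a $\D$-field structure for which $K$ lies in the ring of constants (for instance the one with $\partial(r)=r\otimes 1_\D$), and use that model companions are inherited by the diagram of a model, so that $T_\D\cup\diag(K)$ has a model companion. Next, set $\mathcal{E}=\D\otimes_k K$, a finite-dimensional $K$-algebra with distinguished homomorphism $\pi\otimes\id_K\colon\mathcal{E}\to K$; the chosen structure makes $K$ an $\mathcal{E}$-field. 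By the observation recalled from \cite{ozlem_2018} --- whose point is precisely that $K$ lies in the constants --- for any $\mathcal{E}$-field $L$ the $\D$-field extensions of $L$ coincide with its $\mathcal{E}$-field extensions, and the corresponding operators agree. Thus $T_\D\cup\diag(K)$ is literally the theory $T_\mathcal{E}$ (taken over the base $T\cup\diag(K)$), and so $T_\mathcal{E}$ has a model companion.

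Now I would decompose $\mathcal{E}=\prod_{i=0}^{t}(B_i\otimes_k K)$. Choosing the index $i>0$ for which $P_i$ has a root in $K$, the ring $B_i\otimes_k K$ is Artinian with nilradical $\mathfrak{m}_i\otimes_k K$ and reduction $(B_i/\mathfrak{m}_i)\otimes_k K = K[x]/(P_i)$; since $P_i$ has a root in $K$, this reduced $K$-algebra has $K$ as a direct factor, so $B_i\otimes_k K$ has a local direct factor with residue field $K$. That factor is a local component of $\mathcal{E}$ other than the distinguished one (which sits inside $B_0\otimes_k K$), so $\mathcal{E}$, over the base field $K$, satisfies the hypothesis of Proposition~\ref{D-mc-implies-sigma-mc}; moreover $T\cup\diag(K)$ is model complete because $T$ is. Applying Proposition~\ref{D-mc-implies-sigma-mc} with $(K,\mathcal{E})$ in place of $(k,\D)$ and $T\cup\diag(K)$ in place of $T$ yields that $(T\cup\diag(K))_\sigma$ has a model companion, $\sigma$ being the endomorphism associated to that component. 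But $T\cup\diag(K)$ is model complete and $K$ itself is a model of it with the strict order property, so by Kikyo--Shelah \cite{Kikyo2002TheSO} it cannot have a model companion after adjoining $\sigma$ --- the desired contradiction.

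I do not expect a genuine obstacle here: every step is routine bookkeeping once Proposition~\ref{D-mc-implies-sigma-mc}, the base-change fact from \cite{ozlem_2018}, and Kikyo--Shelah are available. The only point that deserves real care is the claim in the second paragraph that $B_i\otimes_k K$ carries a local direct factor with residue field $K$ lying in a \emph{non-distinguished} component of $\mathcal{E}$, since it is exactly this that licenses the appeal to Proposition~\ref{D-mc-implies-sigma-mc}.
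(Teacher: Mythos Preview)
Your proposal is correct and follows essentially the same route as the paper's argument: pass to $T_\D \cup \diag(K)$, identify this with $T_\mathcal{E}$ for $\mathcal{E} = \D \otimes_k K$, observe that $\mathcal{E}$ acquires a non-distinguished local factor with residue field $K$, apply Proposition~\ref{D-mc-implies-sigma-mc} over the new base, and conclude by Kikyo--Shelah. You supply more detail than the paper does---in particular the explicit choice of the trivial $\D$-structure on $K$ and the verification that the relevant local factor of $B_i \otimes_k K$ is genuinely non-distinguished---but the architecture is identical.
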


Real closed fields and $\Q_p$ have the strict order property, and so this result means if any $P_i$ has a root in some real closed field or some $p$-adically closed field, there is no uniform companion. In particular, if the base field $k$ is a finitely generated field extension of $\Q$, we get a full converse to the main theorem.

\begin{corollary}
Suppose $k$ is a finitely generated field extension of $\Q$. Then there is a uniform companion for theories of large $\D$-fields if and only if $\D$ is a local ring.
\end{corollary}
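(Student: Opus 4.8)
The plan is to prove the two implications separately. The direction ``$\mathcal{D}$ local $\Rightarrow$ a uniform companion exists'' is essentially a restatement of the main results: when $\mathcal{D}$ is local we have $t=0$, so that (Remark~\ref{rem-difference-large}(1)) ``difference large'' means ``large'' and $\Lring(\sigma)=\Lring$, and Theorem~\ref{UC-consequences}(i) (applied with $T_0^*=T^*=T$) then gives that for every model complete $\L$-theory $T$ of large fields --- and more generally for every expansion by definitions of such a $T$ --- the theory $T\cup\UCd$ is the model companion of $T\cup\text{``}\mathcal{D}\text{-fields''}$. Thus $\UCd$ itself is a uniform companion for large $\mathcal{D}$-fields, and nothing more is required for this direction.

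For the converse I would argue contrapositively and apply Corollary~\ref{thm-modifies-kikyo-shelah}. Suppose $\mathcal{D}$ is not local, so $t\geq 1$, and fix the $k$-irreducible polynomial $P_1$ cutting out the residue field of $B_1$. The field $k[x]/(P_1)$ is a finite extension of $k$, hence (as $k$ is) finitely generated over $\Q$. I would now invoke the classical fact that \emph{every} finitely generated field of characteristic $0$ embeds into $\Q_p$ for infinitely many primes $p$; one way to see this is to combine Chebotarev's density theorem, applied to the subfield of algebraic numbers of $k[x]/(P_1)$ (a number field), with Lang--Weil and Hensel's lemma to produce $\Q_p$-rational points on a suitable smooth affine variety with function field $k[x]/(P_1)$, and then to use that $\Q_p$ is large to conclude that such points are Zariski dense and therefore give a field embedding. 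Fixing such a $p$ and an embedding $k[x]/(P_1)\hookrightarrow\Q_p$, we obtain that $P_1$ has a root in $\Q_p$.

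Finally, let $T$ be the theory of $\Q_p$ in a language in which it is model complete --- for instance Macintyre's language (the language of rings together with a predicate for the $n$-th powers for each $n$), which is an expansion of $\Lring$ by definitions and in which $p$-adically closed fields admit quantifier elimination. Then $T$ is a model complete theory of large fields, it has the strict order property (as recalled after Corollary~\ref{thm-modifies-kikyo-shelah}), and it has the model $\Q_p$ in which $P_1$ has a root. By Corollary~\ref{thm-modifies-kikyo-shelah}, $T\cup\text{``}\mathcal{D}\text{-fields''}$ has no model companion; \emph{a fortiori} there is no single $\Lring(\partial)$-theory $U$ for which $T^*\cup U$ is the model companion of $T^*\cup\text{``}\mathcal{D}\text{-fields''}$, so no uniform companion for large $\mathcal{D}$-fields exists. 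The one non-formal input is the embedding fact of the middle paragraph; the rest is just checking that ``some $P_i$ acquires a root over a $p$-adically closed field'' is exactly the hypothesis Corollary~\ref{thm-modifies-kikyo-shelah} consumes, which I expect to be the step requiring the most care.
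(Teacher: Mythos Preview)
Your proposal is correct and follows essentially the same route as the paper: the local direction is immediate from Section~\ref{sec-uc}, and for the non-local direction you embed the residue field of some $B_i$ into a $\Q_p$ and invoke Corollary~\ref{thm-modifies-kikyo-shelah}. The only difference is that the paper dispatches the embedding step by a direct citation of Cassels's theorem (that every finitely generated extension of $\Q$ embeds in some $\Q_p$), whereas you sketch an argument via Chebotarev, Lang--Weil, and Hensel; your sketch is plausible but the citation is cleaner, and note also that $\Th(\Q_p)$ is already model complete in $\Lring$, so the detour through Macintyre's language is unnecessary.
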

\begin{proof}
If $\D$ is local, $\D$-fields whose associated difference field is difference large correspond precisely to $\D$-fields whose underlying field is large. The uniform companion then exists by Section~\ref{sec-uc}.

Suppose $\D$ is not local. Then the splitting field of the polynomial $P_1 \in k[x]$ is a finitely generated extension of $\Q$, and so by Theorem~1 of \cite{cassels_1976}, embeds in some $\Q_p$. Hence $P_1$ has a root in $\Q_p$. Then by Theorem~\ref{thm-modifies-kikyo-shelah}, $\Th(\Q_p)_\D$ has no model companion.
\end{proof}

\begin{remark}
The base field $k$ does have an impact on when the uniform companion exists. If $k$ is algebraically closed, then the only model complete theory of fields containing $k$ is $\text{ACF}_0$, and hence the existence of a uniform companion for $\D$-fields is equivalent to the existence of the model companion of $\text{ACF}_0 \cup \text{``$\D$-fields''}$; this exists for all $\D$ by Theorem~7.6 of \cite{moosa_scanlon_2013}.

However, for other fields $k$ the question is still open. For instance, suppose $k = \R$. No model of $\Th(\Q_p)$ can be an $\R$-algebra, and so $\Th(\Q_p) \cup \text{``$\D$-fields''}$ is inconsistent. Hence the above method does not show that there is no uniform companion in the case $\D = \R \times \C$, for instance.
\end{remark}


\begin{thebibliography}{10}

\bibitem{adler2005}
Hans Adler.
\newblock {\em Explanation of Independence}.
\newblock PhD thesis, University of Freiburg, 2005.

\bibitem{adler_2009}
Hans Adler.
\newblock A geometric introduction to forking and thorn-forking.
\newblock {\em Journal of Mathematical Logic}, 9(1):1--20, 2009.

\bibitem{atiyah_introduction_2018}
Michael~F. Atiyah and Ian~G. Macdonald.
\newblock {\em Introduction to Commutative Algebra}.
\newblock Addison-Wesley, 1969.

\bibitem{ozlem_2018}
\"{O}zlem Beyarslan, Daniel~Max Hoffmann, Moshe Kamensky, and Piotr Kowalski.
\newblock Model theory of fields with free operators in positive characteristic.
\newblock {\em Transactions of the American Mathematical Society}, 372(8):5991--6016, 2019.

\bibitem{neron_1990}
Siegfried Bosch, Werner Lütkebohmert, and Michel Raynaud.
\newblock {\em Néron Models}, volume~21 of {\em Ergebnisse der Mathematik und ihrer Grenzgebiete}.
\newblock Springer Berlin, Heidelberg, 1990.

\bibitem{cassels_1976}
J.W.S. Cassels.
\newblock An embedding theorem for fields.
\newblock {\em Bulletin of the Australian Mathematical Society}, 14(2):193–198, 1976.

\bibitem{chatzidakis-hrushovski}
Zo\'{e} Chatzidakis and Ehud Hrushovski.
\newblock Model theory of difference fields.
\newblock {\em Transactions of the American Mathematical Society}, 351(8):2997--3071, 1999.

\bibitem{chatzidakis_pillay}
Zo\'{e} Chatzidakis and Anand Pillay.
\newblock Generic structures and simple theories.
\newblock {\em Annals of Pure and Applied Logic}, 95(1):71--92, 1998.

\bibitem{chatzidakis_1999}
Zoé Chatzidakis.
\newblock Simplicity and independence for pseudo-algebraically closed fields.
\newblock In S.~Barry Cooper and John~K Truss, editors, {\em Models and Computability}, volume 259 of {\em London Mathematical Society Lecture Note Series}, pages 41 –-- 62. Cambridge University Press, 1999.

\bibitem{cousins-thesis}
Gregory Cousins.
\newblock {\em Some model theory of fields and differential fields}.
\newblock PhD thesis, University of Notre Dame, 2019.

\bibitem{fornasiero-terzo}
Antongiulio Fornasiero and Giuseppina Terzo.
\newblock Generic derivations on algebraically bounded structures.
\newblock {\em Preprint}, arXiv:2310.20511, 2023.

\bibitem{GUZY2010570}
Nicolas Guzy and Françoise Point.
\newblock Topological differential fields.
\newblock {\em Annals of Pure and Applied Logic}, 161(4):570--598, 2010.

\bibitem{hoffman-dynamics}
Daniel~Max Hoffman.
\newblock Model theoretic dynamics in a {Galois} fashion.
\newblock {\em Annals of Pure and Applied Logic}, 170(7):755--804, 2019.

\bibitem{hoffman-leon-sanchez}
Daniel~Max Hoffman and Omar Le\'{o}n~S\'{a}nchez.
\newblock Model theory of differential fields with finite group actions.
\newblock {\em Journal of Mathematical Logic}, 22(1):2250002, 2022.

\bibitem{etale-open}
Will Johnson, Chieu-Minh Tran, Erik Walsberg, and Jinhe Ye.
\newblock The \'{e}tale-open topology and the stable fields conjecture.
\newblock {\em Journal of the European Mathematical Society}, 2023.

\bibitem{johnson-ye-geo}
Will Johnson and Jinhe Ye.
\newblock A note on geometric theories of fields.
\newblock {\em Model Theory}, 2(1):121--132, 2023.

\bibitem{junker-koenigsmann}
Markus Junker and Jochen Koenigsmann.
\newblock {Schlanke Körper (slim fields)}.
\newblock {\em The Journal of Symbolic Logic}, 75(2):481--500, 2010.

\bibitem{Kikyo2002TheSO}
Hirotaka Kikyo and Saharon Shelah.
\newblock The strict order property and generic automorphisms.
\newblock {\em Journal of Symbolic Logic}, 67:214--216, 2002.

\bibitem{kim-pillay-simple}
Byunghan Kim and Anand Pillay.
\newblock Simple theories.
\newblock {\em Annals of Pure and Applied Logic}, 88(2):149--164, 1997.

\bibitem{kowalski_pillay_quantifier_2006}
Piotr Kowalski and Anand Pillay.
\newblock Quantifier elimination for algebraic {D}-groups.
\newblock {\em Transactions of the American Mathematical Society}, 358(1):167--181, 2006.

\bibitem{leon_sanchez_differentially_2020}
Omar Le\'{o}n~S\'{a}nchez and Marcus Tressl.
\newblock Differentially large fields.
\newblock {\em to appear in Algebra and Number Theory}, arXiv:2005.00888, 2020.

\bibitem{matsumura_ring}
Hideyuki Matsumura.
\newblock {\em Commutative Ring Theory}, volume~8 of {\em Cambridge Studies in Advanced Mathematics}.
\newblock Cambridge University Press, 1989.

\bibitem{michaux-riviere-codf}
Christian Michaux and C{\'e}dric Rivi{\`e}re.
\newblock {Quelques remarques concernant la théorie des corps ordonnés différentiellement clos}.
\newblock {\em Bulletin of the Belgian Mathematical Society}, 12(3):341 -- 348, 2005.

\bibitem{mohamed-2022}
Shezad Mohamed.
\newblock The {Weil} descent functor in the category of algebras with free operators.
\newblock {\em Journal of Algebra}, 640:216--252, 2024.

\bibitem{moosa_scanlon_2010}
Rahim Moosa and Thomas Scanlon.
\newblock Jet and prolongation spaces.
\newblock {\em Journal of the Institute of Mathematics of Jussieu}, 9(2):391–430, 2010.

\bibitem{moosa_scanlon_ghs}
Rahim Moosa and Thomas Scanlon.
\newblock {Generalized Hasse-Schmidt varieties and their jet spaces}.
\newblock {\em Proceedings of the London Mathematical Society}, 103(2):197--234, 2011.

\bibitem{moosa_scanlon_2013}
Rahim Moosa and Thomas Scanlon.
\newblock Model theory of fields with free operators in characteristic zero.
\newblock {\em Journal of Mathematical Logic}, 14(2):1450009, 2014.

\bibitem{pierce-pillay}
David Pierce and Anand Pillay.
\newblock A note on the axioms for differentially closed fields of characteristic zero.
\newblock {\em Journal of Algebra}, 204(1):108--115, 1998.

\bibitem{pillay-polkowska-pac}
Anand Pillay and Dominika Polkowska.
\newblock On {PAC} and bounded substructures of a stable structure.
\newblock {\em The Journal of Symbolic Logic}, 71(2):460--472, 2006.

\bibitem{pop96}
Florian Pop.
\newblock {Embedding Problems Over Large Fields}.
\newblock {\em Annals of Mathematics}, 144(1):1--34, 1996.

\bibitem{tressl_uniform_2005}
Marcus Tressl.
\newblock The uniform companion for large differential fields of characteristic 0.
\newblock {\em Transactions of the American Mathematical Society}, 357(10):3933--3951, 2005.

\bibitem{van_den_dries_bounds_1984}
L.~van~den Dries and K.~Schmidt.
\newblock Bounds in the theory of polynomial rings over fields. a nonstandard approach.
\newblock {\em Inventiones Mathematicae}, 76(1):77--91, 1984.

\end{thebibliography}
\end{document}